\newtheorem{lemma}{Lemma}
\newtheorem{proposition}[lemma]{Proposition}
\newtheorem{theorem}{Theorem}
\newtheorem{corollary}[lemma]{Corollary}
\newtheorem*{remark}{Remark}
\newcommand{\N}{\mathbb{N}}
\newcommand{\R}{\mathbb{R}}
\newcommand{\C}{\mathbb{C}}
\newcommand{\E}{\mathbb{E}}
\newcommand{\Var}{\mathbb{V}\mathrm{ar}}
\newcommand{\Cov}{\C\mathrm{ov}}
\newcommand{\ov}[1]{\overline{#1}}
\title{Central moments of the free energy of the O'Connell-Yor polymer}
\author{Christian Noack\thanks{Department of Mathematics, Cornell University. C.N. was supported by NSF RTG grant 1645643.} \and Philippe Sosoe\thanks{Department of Mathematics, Cornell University. P.S.'s research was partially supported by NSF grant DMS 1811093.} }
\date{\today}
\begin{document}
\maketitle

\begin{abstract}
    Sepp\"al\"ainen and Valk\'o showed in \cite{SV} that for a suitable choice of parameters, the variance growth of the free energy of the stationary O'Connell-Yor polymer is governed by the exponent $2/3$, characteristic of models in the KPZ universality class.
    
    We develop exact formulas based on Gaussian integration by parts to relate the cumulants of the free energy, $\log Z_{n,t}^\theta$,  to expectations of products of quenched cumulants of the time of the first jump from the boundary into the system, $s_0$. We then use these formulas to obtain estimates for the $k$-th central moment of $\log Z_{n,t}^\theta$ as well as the $k$-th annealed moment of $s_0$ for $k> 2$, with nearly optimal exponents $(1/3)k+\epsilon$ and $(2/3)k+\epsilon$, respectively.
\end{abstract}

\section{Introduction}
%We estimate the central moments of the stationary semi-discrete polymer in a Brownian environment, also known as the O'Connell-Yor polymer. From previous work of Sepp\"al\"ainen and Valk\'o \cite{SV}, it is known that, for a suitable choice of the parameters, the variance growth is governed by the exponent $2/3$, characteristic of fluctuations of models in the KPZ class.
  %  We develop formulas based on Gaussian integration by parts to relate the cumulants of the free energy, $\log Z_{n,t}^\theta$,  to expectations of products of quenched cumulants of the first jump, $s_0$, from the boundary into the system. We then use these formulas to obtain estimates for the $k$-th central moment of $\log Z_{n,t}^\theta$ as well as the $k$-th annealed moment of $s_0$ for $k> 2$, with nearly optimal exponents $(1/3)k+\epsilon$ and $(2/3)k+\epsilon$, respectively.
The semi-discrete polymer in a Brownian environment was introduced by O'Connell and Yor in \cite{OY}. It is one of only a few known examples of integrable polymer models.  To define it, let $n\ge 1$, $t>0$, and $B_n(t)$, $n=1,2,\dots$ be independent Brownian motions.   Introduce the energy
\begin{equation*}
    %\label{eqn: energy}
\mathcal{E}_{n,t}(s_1,\ldots,s_{n-1})=\sum_{j=1}^n(B_j(s_{j})-B_j(s_{j-1})),
\end{equation*}
where we set $s_0:=0$ and $s_n:=t$. The semi-discrete (point-to-point) polymer partition function from $(0,0)$ to $(t,n)$ is given by
\begin{equation*}
Z_{n,t}=\int_{0<s_1<\cdots<s_{n-1}<t}e^{ \mathcal{E}_{n,t}(s_1,\ldots,s_{n-1})}\,\mathrm{d}s_1\cdots\mathrm{d}s_{n-1}.
\end{equation*}
The probabilistic interpretation of the right-hand side is as a Gibbs ensemble of up-right paths between $(0,0)$ and $(t,n)$. Each path consists of $n-1$ Poisson-distributed successive jumps at times $0<s_1<s_2,\ldots< s_{n-1}< t$ of height one between discrete levels $j=1,\ldots,n$. For each $j$, the path remains on level $j$ for time $s_j-s_{j-1}$. See \cite[Definition 1.1]{BCF} for a precise description of the path interpretation. The path interpretation justifies the name \emph{polymer}, and reveals $Z_{n,t}$ as the partition function of the Gibbs ensemble described above.

In this paper, we consider a family of stationary versions of the polymer partition function,  also studied in \cite{OY}. To define it, we introduce an extra two-sided Brownian motion $B_0(s)$, $s\in\mathbb{R}$, independent of $B_1,\ldots, B_n$ and also extend the Brownian motions $B_1,\ldots, B_n$ to two-sided Brownian motions. For $\theta>0$, define
\[\mathcal{E}^\theta_{n,t}(s_0,\cdots, s_{n-1}):=\theta s_0-B_0(s_0)+B_1(s_0)+\mathcal{E}_{n,t}(s_1,\cdots,s_{n-1}).\]
The stationary partition function is then
\begin{equation*}Z^\theta_{n,t}=\int_{-\infty<s_0<s_1<\cdots< s_{n-1}<t}e^{\mathcal{E}^\theta_{n,t}(s_0,\cdots,s_{n-1})}\,\mathrm{d}s_0\mathrm{d}s_1\cdots \mathrm{d}s_{n-1}.
\end{equation*}
For $n=0$, we let
\[Z_{0,t}^\theta=e^{-B_0(t)+\theta t}.\]
Note that now the $s_j$ can now range over the entire real line. Following Sepp\"al\"ainen and Valk\'o \cite{SV}, the Gibbs distribution of the initial jump $s_0$ plays a key role in the analysis in this paper, because it is a dual variable to the parameter $\theta>0$.

The main result in \cite{OY} implies that the free energy, $\log Z^\theta_{n,t}$, equals a combination of a sum of i.i.d.\ random variables and the Brownian motion $B_0(t)$:
\begin{proposition}[\cite{OY}]\label{prop: burke}
For each $n\ge 1$ and $t\geq0$, we have the identity
\begin{equation}\label{eqn: burke}
\log Z_{n,t}^\theta = \sum_{j=1}^n r_j^\theta(t)-B_0(t)+\theta t
\end{equation}
where
\[r_j^\theta(t):=\log Z_{j,t}^\theta-\log Z_{j-1,t}^\theta\]
are independent and identically distributed, with law equal to that of the random variable 
\[\log \frac{1}{X_\theta},\]
where $X_\theta$ is gamma-distributed with parameter $\theta$:
\[\mathbb{P}(X_\theta\in \mathrm{d}x)=\frac{1}{\Gamma(\theta)}x^{\theta-1}e^{-x}\mathrm{d}x,\]
where $\Gamma$ denotes the Gamma function, see \eqref{eqn: gamma-def}.
\end{proposition}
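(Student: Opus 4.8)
The identity \eqref{eqn: burke} is, once the distributional claim is granted, pure telescoping: by definition $\sum_{j=1}^n r_j^\theta(t)=\log Z_{n,t}^\theta-\log Z_{0,t}^\theta$, and $\log Z_{0,t}^\theta=\theta t-B_0(t)$. So the entire content is the assertion that $r_1^\theta(t),\dots,r_n^\theta(t)$ are i.i.d.\ with the stated law, and the plan is to deduce this from a single-step ``Burke property'' together with an induction on $n$. The starting point is the elementary recursion obtained by integrating out the last variable $s_{n-1}$ in the definition of $Z_{n,t}^\theta$:
\[
Z_{n,t}^\theta=\int_{-\infty}^{t}e^{B_n(t)-B_n(s)}\,Z_{n-1,s}^\theta\,\mathrm{d}s,\qquad n\ge1,
\]
with $Z_{0,s}^\theta=e^{\theta s-B_0(s)}$; the integral converges at $-\infty$ since, inductively, $s\mapsto\log Z_{n-1,s}^\theta$ is a Brownian motion with positive drift $\theta$, so the integrand decays like $e^{\theta s}$. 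Writing $X_j(\cdot):=\log Z_{j,\cdot}^\theta$, a short It\^o computation turns this recursion into $\mathrm{d}X_j(t)=e^{X_{j-1}(t)-X_j(t)}\,\mathrm{d}t+\mathrm{d}B_j(t)$.

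The crux is the Brownian analogue of Burke's theorem, which I would isolate as a single-step lemma: if $X$ is a two-sided Brownian motion with drift $\theta$ and $B$ an independent two-sided standard Brownian motion, and $X':=\log\int_{-\infty}^{\cdot}e^{B(\cdot)-B(s)+X(s)}\,\mathrm{d}s$, $r:=X'-X$, then (i) $X'$ is again a Brownian motion with drift $\theta$, (ii) $r(t)\stackrel{d}{=}\log(1/X_\theta)$ for each fixed $t$, and (iii) $r(t)$ is independent of the ``departure path'' $\{X'(s):s\le t\}$. Claim (ii) is the easy part: from the It\^o computation above, $r$ is a stationary solution of $\mathrm{d}r=(e^{-r}-\theta)\,\mathrm{d}t+\sqrt{2}\,\mathrm{d}\beta$, whose invariant density is proportional to $\exp(-e^{-r}-\theta r)$, i.e.\ $e^{-r}$ is $\mathrm{Gamma}(\theta)$-distributed, so $r\stackrel{d}{=}\log(1/X_\theta)$; alternatively one reads this off Dufresne's identity $\int_0^\infty e^{-2(\beta(s)+\theta s)}\,\mathrm{d}s\stackrel{d}{=}(2X_\theta)^{-1}$ after a time change. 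Claims (i) and (iii) are the genuine Burke phenomenon and are the main obstacle: I would prove them by the reversibility argument of \cite{OY} --- time-reversing the departure process $X'$ produces a process of the same type as the arrival input $X$, and $r(t)$, being built from the arrivals on $(-\infty,t]$, cannot anticipate the reversed arrivals and is therefore independent of the past departures; the same reversal identity yields that $X'$ has the same law as the arrival process.

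It remains to iterate. By part (i) and induction (base case $X_0(s)=\theta s-B_0(s)$), each $X_j$ is a Brownian motion with drift $\theta$, so part (ii) gives $r_j^\theta(t)\stackrel{d}{=}\log(1/X_\theta)$ for all $j$. For independence, fix $j$: the random variable $r_j^\theta(t)$ and the path $\{X_j(s):s\le t\}$ are both deterministic functions of $(\{X_{j-1}(s):s\le t\},\{B_j(s):s\le t\})$, so (iii) gives $r_j^\theta(t)\perp\{X_j(s):s\le t\}$; and since $(B_{j+1},B_{j+2},\dots)$ is independent of $(X_{j-1},B_j)$ --- $X_{j-1}$ being built from $X_0,B_1,\dots,B_{j-1}$ --- it is independent of the pair $(r_j^\theta(t),\{X_j(s):s\le t\})$. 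These two facts together force $r_j^\theta(t)$ to be independent of the $\sigma$-algebra generated by $\{X_j(s):s\le t\}$ and $(B_{j+1},\dots,B_n)$; as $r_{j+1}^\theta(t),\dots,r_n^\theta(t)$ are, by the recursion, measurable with respect to that $\sigma$-algebra, we obtain $r_j^\theta(t)\perp(r_{j+1}^\theta(t),\dots,r_n^\theta(t))$ for each $j$, which is exactly mutual independence of $r_1^\theta(t),\dots,r_n^\theta(t)$. Combined with the opening telescoping remark, this gives \eqref{eqn: burke}.
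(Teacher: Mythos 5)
The paper offers no proof of this proposition at all --- it is stated as a citation to \cite{OY} --- so the comparison here is really with the argument of O'Connell--Yor, and your outline is a faithful and essentially correct reconstruction of it. The parts you work out in detail are all right: the telescoping reduction using $\log Z_{0,t}^\theta=\theta t-B_0(t)$, the one-level recursion $Z_{n,t}^\theta=\int_{-\infty}^t e^{B_n(t)-B_n(s)}Z_{n-1,s}^\theta\,\mathrm{d}s$ and its convergence at $-\infty$, the It\^o computation $\mathrm{d}X_j=e^{X_{j-1}-X_j}\,\mathrm{d}t+\mathrm{d}B_j$ (no It\^o correction since the non-Brownian factor has finite variation), the identification of the marginal of $r$ either through the invariant density $\propto e^{-e^{-r}-\theta r}$ of the stationary SDE or through Dufresne's identity (the constants check out), and --- importantly --- the independence bookkeeping in the iteration, where you correctly upgrade $r_j\perp\{X_j(s):s\le t\}$ together with $(B_{j+1},\dots,B_n)\perp(X_{j-1},B_j)$ to $r_j\perp(r_{j+1},\dots,r_n)$, which does yield mutual independence. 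The one place your write-up is not self-contained is exactly where you say it isn't: claims (i) and (iii) of the single-step lemma, i.e.\ that the departure process is again a drifted Brownian motion and that $r(t)$ is independent of the departures up to time $t$. This is the genuine content of the Brownian Burke theorem, and your time-reversal heuristic is the right one, but it is a sketch rather than a proof; a complete argument would have to establish the reversal identity for the map $(X,B)\mapsto X'$ (as in \cite{OY}). Since the paper itself defers the entire proposition to \cite{OY}, deferring this single step there is a reasonable resolution, and everything you built on top of it is sound.
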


O'Connell and Moriarty \cite{MO} used the representation \eqref{eqn: burke} of Proposition \ref{prop: burke}, to compute the first order asymptotics of $\log Z_{n,t}$. Since its introduction in \cite{OY}, the semi-discrete polymer has been the subject of much investigation, revealing a rich algebraic structure far beyond the invariant measure statement contained in Proposition \ref{prop: burke}. See for example \cite{BC, BCF, CN, IS, J, JM, MSV, MO, OY, O, SV}.  Here we mention only a few of the many existing results about the semi-discrete polymer. In \cite{O}, O'Connell embedded the processes $\log Z_{j,t},\, j=1,\ldots,n$, $t>0$ in a triangular array of solutions to stochastic differential equations. He identified $\log Z_{n,t}$, as the first coordinate of an $n$-dimensional diffusion, the  $h$-transform of a Brownian motion by a certain Whittaker function. O'Connell used this connection to obtain an explicit formula for the Laplace transform of $\log Z_{n,t}$. Borodin, Corwin, and Ferrari \cite{BCF} used a modification of O'Connell's formula to show that the centered and rescaled free energy $\log Z_{n,t}$ converges in distribution to a Tracy-Widom GUE random variable.

Closer to the spirit of this paper, Sepp\"al\"ainen and Valk\'o adapted an argument from Sepp\"al\"ainen's work on the discrete log-gamma polymer  \cite{S} to obtain upper and lower bounds for the fluctuation exponents associated with the polymer. Predictions from physics \cite{KS} have led to the expectation that, for a broad family of 1+1-dimensional polymer models in random environments, there exist  exponents $\chi$, $\xi$ such that the variance of the free energy is of order $n^{2\chi}$, while the typical deviation of the polymer paths from a straight line is of order $n^\xi$. For the stationary semi-discrete polymer, the paper \cite{SV} contains a proof of the estimates
\begin{equation}\label{eqn: SV}
\begin{split}
    \Var(\log Z_{n,t}^\theta)&\asymp n^{2\chi},\\
    \mathbb{E}[E_{n,t}^\theta[|s_0|]]&\asymp n^{\xi},
\end{split}
\end{equation}
with $\xi=2\chi=\frac{2}{3}$, where $E_{n,t}^\theta[\cdot]$ denotes the expectation with respect to the (random) polymer measure (see Definition \ref{eqn: quenched-E}). See also Moreno-Flores, Sepp\"al\"ainen, and Valk\'o \cite{MSV} for a derivation of the fluctuation and wandering exponents in the so-called \emph{intermediate disorder regime} where the partition function $Z_{n,t}^\theta$ has an additional $n$-dependent temperature parameter. In Section \ref{sec: convexity}, we reprove the upper bounds of \eqref{eqn: SV} by an alternative argument using the convexity of the free energy, $\log Z_{n,t}^\theta$ in the parameter $\theta$.

Our main result complements the upper bounds in \eqref{eqn: SV} with nearly optimal (up to $n^\epsilon$) estimates for all central moments of $\log Z_{n,t}^\theta$ and all annealed moments of $s_0$, implying strong concentration on an almost optimal scale. As explained in Section \ref{sec: estimates}, the proof relies on inequalities that appear closely related to the predicted Kardar-Parisi-Zhang scaling relations \cite{C,KS}.

It may be possible to extend the argument to certain stationary integrable models such as the log-gamma polymer \cite{S}, the strict-weak polymer \cite {CSS2015,OO2015}, the beta polymer \cite{BC2016}, and the inverse-beta polymer \cite{TL2015}.  It may in fact be possible to extend the argument simultaneously to these four polymers using the Mellin-transform framework put forth in \cite{CN2018}.  We leave such potential extensions to later work.

\subsection{Main Results}
To state our results, we introduce some notation for expectations with respect to the Gibbs measure associated with $\log Z_{n,t}^\theta$. Let $\theta>0$, $n\ge 1$, $t>0$, and $f=f(s_0,\cdots,s_{n-1})$ be a real-valued function on $\mathbb{R}^n$ such that 
\begin{equation}\label{eqn: f-condition}
    |f(s_0,\cdots, s_{n-1})|\le e^{-\nu \min(s_0,0)}\quad \text{ for all } s_0\in \R
\end{equation}
with some $\nu<\theta$.

We define the \emph{quenched expectation} by 
\begin{equation}\label{eqn: quenched-E}
    E_{n,t}^\theta[f]:=\frac{1}{Z_{n,t}^\theta}\int_{-\infty<s_0<s_1<\cdots<s_{n-1}<t}e^{\mathcal{E}_{n,t}^\theta(s_0,\cdots,s_{n-1})}f(s_0,s_1\ldots,s_{n-1})\,\underline{\mathrm{d}s},
\end{equation}
where
\[\underline{\mathrm{d}s}=\mathrm{d}s_0\,\mathrm{d}s_1\cdots \mathrm{d}s_{n-1}.\]
The \emph{annealed expectation} is defined by
\begin{equation*}
    \mathbb{E}^\theta_{n,t}[f]:=\mathbb{E}[E_{n,t}^\theta[f]].
\end{equation*}
In many instances below, $n$ and $t$ are fixed throughout a section or computation, and we omit these variables from the notation: $\mathbb{E}^\theta[f]=\mathbb{E}^\theta_{n,t}[f]$. 

Let $\mathbbm{1}_A$ be the indicator of a set $A\subset \mathbb{R}^n$:
\begin{equation*}
    \mathbbm{1}_A(s_0,\ldots, s_{n-1})=\begin{cases}
        1& \quad \text{ if } (s_0,\ldots,s_{n-1})\in A,\\
        0&\quad \text{ otherwise.}
    \end{cases}
\end{equation*}
We use the suggestive notation
\begin{equation*}
    P^\theta_{n,t}(A):= E^\theta_{n,t}[\mathbbm{1}_A]\quad \text{ and } \quad 
    \mathbb{P}^\theta(A):=\mathbb{E}^\theta[\mathbbm{1}_A].
\end{equation*}
We refer to the first quantity as the quenched probability of the event $A$, and the second quantity as its annealed probability.

Our main result provides near-optimal estimates for \emph{any} moment of the centered free energy and \emph{any} annealed moment of the time of first jump:
\begin{theorem}\label{thm: main-moments} 
Let $\psi_1(\theta)=\frac{\mathrm{d}}{\mathrm{d}\theta}\Gamma'(\theta)/\Gamma(\theta)$ denote the \emph{trigamma function}, and suppose that
\begin{equation}\label{eqn: char-dir-1}
    |t-n\psi_1(\theta)|\le An^{2/3}.
\end{equation}
For every $\epsilon>0$, $\theta\in (0,\infty)$, and $p\in (0,\infty)$, there exists a constant $C=C(\epsilon,\theta,p)>0$ such that      for all  $n\in \N$,   
\begin{align}
 \mathbb{E}[|\overline{\log Z_{n,t}^\theta}|^p]&\le Cn^{(1/3)p+\epsilon} \quad \text{and}\label{eq: Thrm 1-1}\\ 
 \mathbb{E}^\theta_{n,t}[|s_0|^p]&\le C n^{(2/3)p+\epsilon}\label{eq: Thrm 1-2}
\end{align}
where $\ov{X}=X-\mathbb{E}[X]$ denotes the centered random variable. 
%Moreover, we have the following
%\begin{equation}\mathbb{E}^\theta_{n,t}[|s_0|^p]\le C n^{(2/3)p+\epsilon}\quad \text{for all $n\in\N$.}
%\end{equation}
%Here, $s_0^+$ denotes the positive part of $s_0$.
\end{theorem}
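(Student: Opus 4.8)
### Proof Strategy

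The plan is to establish the two bounds together via an induction on $p$, using Gaussian integration by parts (Stein's identity for the Brownian environment) to express cumulants of $\log Z_{n,t}^\theta$ in terms of annealed expectations of products of quenched cumulants of $s_0$, and then bootstrapping: control of moments of $s_0$ up to order $p$ feeds into control of the $(p+1)$-st moment of $\log Z_{n,t}^\theta$, which in turn — via the convexity/differentiation-in-$\theta$ identities — feeds back into control of moments of $s_0$ up to order $p+1$. The base case is furnished by the $\chi = 1/3$, $\xi = 2/3$ upper bounds in \eqref{eqn: SV}, reproved in Section \ref{sec: convexity} by convexity of $\theta \mapsto \log Z_{n,t}^\theta$; this gives \eqref{eq: Thrm 1-1} and \eqref{eq: Thrm 1-2} for $p = 2$.

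First I would set up the exact cumulant formulas. Since $\log Z_{n,t}^\theta$ is a smooth functional of the Brownian paths $B_0, B_1, \dots, B_n$, repeated application of Gaussian integration by parts expresses the $k$-th cumulant of $\log Z_{n,t}^\theta$ as an integral over $[0,t]$ (or $\R$, for the $B_0, B_1$ contributions at $s_0$) of annealed expectations of products of Malliavin-type derivatives of $\log Z_{n,t}^\theta$. The key structural point, already signalled in the abstract, is that these derivatives are precisely the quenched cumulants of the occupation times, and in particular the derivative in the $B_0$/$B_1$ direction at the boundary produces quenched cumulants of $s_0$. Carrying this out carefully — keeping track of which Brownian increments appear, and using \eqref{eqn: burke} to reduce to the $r_j^\theta$'s where convenient — yields an identity of the schematic form: $(k\text{-th cumulant of } \log Z_{n,t}^\theta) = \sum (\text{products of annealed expectations of quenched cumulants of } s_0 \text{ and occupation times})$, where each term on the right involves fewer ``free'' derivatives than $k$.

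The second ingredient is the dual relation between $\theta$ and $s_0$: differentiating $\log Z_{n,t}^\theta$ in $\theta$ produces $E_{n,t}^\theta[s_0]$ at first order, and higher $\theta$-derivatives produce quenched cumulants of $s_0$; convexity then bounds $\mathbb{E}^\theta_{n,t}[|s_0|^p]$-type quantities by differences of $\Var(\log Z_{n,t}^\theta)$ across nearby characteristic directions, all of which lie in the window \eqref{eqn: char-dir-1} after adjusting $\theta$. Combining this with the cumulant identity and Hölder's inequality (to split products of quenched cumulants into moments), one obtains a closed system of inequalities: if $M_p := \mathbb{E}[|\ov{\log Z_{n,t}^\theta}|^p]$ and $S_p := \mathbb{E}^\theta_{n,t}[|s_0|^p]$ satisfy $M_j \le C n^{(1/3)j+\epsilon}$ and $S_j \le C n^{(2/3)j + \epsilon}$ for all $j \le p$, then the integration-by-parts identity forces $M_{p+1} \lesssim n^{(1/3)(p+1) + \epsilon}$ up to the $\epsilon$-loss absorbing logarithmic and combinatorial factors, and the convexity bound then upgrades $S_{p+1}$ similarly. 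One has to run the induction in half-integer or integer steps and interpolate at the end to cover all real $p \in (0,\infty)$, and to handle odd versus even $p$ (using that central moments and cumulants differ by lower-order products, controlled inductively).

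The main obstacle I anticipate is \textbf{bounding the error terms in the cumulant expansion with the correct exponent}: a naive application of integration by parts produces integrals over $[0,t]$, and since $t \asymp n \psi_1(\theta)$ is of order $n$, a term that is merely bounded pointwise would contribute an extra factor of $n$ and destroy the scaling. The resolution must exploit that the relevant quenched cumulants of $s_0$ and of the occupation times are concentrated — the occupation-time derivatives are effectively supported near the boundary or decay away from it — so that the time integrals contribute $O(n^{2/3})$ rather than $O(n)$; quantifying this concentration, uniformly in $n$, is where the real work lies, and it is precisely here that the structure mirrors the KPZ scaling relations $\chi = 2\xi - 1$ alluded to in Section \ref{sec: estimates}. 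A secondary technical nuisance is justifying the Gaussian integration by parts rigorously (integrability of $\log Z_{n,t}^\theta$ and its derivatives, legitimacy of exchanging expectation and differentiation), which the growth condition \eqref{eqn: f-condition} with $\nu < \theta$ and standard gamma-tail estimates from Proposition \ref{prop: burke} should handle, but which needs to be stated cleanly to make the induction airtight.
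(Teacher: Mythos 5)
You have correctly identified the two pillars of the argument --- the Gaussian integration-by-parts/cumulant identity expressing $\kappa_k(\log Z_{n,t}^\theta)$ through expectations of products of quenched cumulants of $s_0^+$, and the need for a truncation exploiting concentration of $s_0^+$ to avoid the fatal factor $t^{b/2}\asymp n^{b/2}$ coming from the Hermite polynomials $H_{b,t}(B_0(t))$. But the induction scheme you propose --- induction on the moment order $p$, with base case $p=2$ and the sharp exponents carried at every stage --- does not close, and it is not what the paper does. The obstruction sits exactly at the step you flag as ``where the real work lies.'' To replace $H_{b,t}(B_0(t))$ by $H_{b,\tau}(B_0(\tau))$ with $\tau\approx n^{2/3}$ and an error that is negligible (the paper needs it to be $O(n^{-K})$ with $K>2k$), one must show that $\mathbb{E}^\theta_{n,t}[(s_0^+)^m\mathbbm{1}_{\{s_0^+>\tau\}}]$ is tiny, and the only available tool is Chebyshev with a moment of $s_0^+$ of order $M\gg m$, chosen after $K$ and $\epsilon$; see the choice of $M$ at the end of the proof of Lemma \ref{lem: truncation}. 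Those high moments of $s_0^+$ are in turn obtained from high moments of $\log Z_{n,t}^\theta$ via the quenched Chernoff bound of Lemma \ref{lem: SV tail bound} (which, note, is a shift $\theta\mapsto\theta+cu/n$ of the boundary parameter combined with the characteristic-direction cancellation, rather than literally the convexity inequality \eqref{eqn: convexity}, which only handles the variance). So controlling the $(p+1)$-st moment of $\log Z_{n,t}^\theta$ requires moments of $s_0^+$, hence of $\log Z_{n,t}^\theta$, of unbounded order --- information an induction on $p$ starting from $p=2$ never possesses.

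The paper's induction is instead on the exponent, uniformly in the moment order: the base case (Proposition \ref{prop: 1st step in induction}) is the crude bound $\mathbb{E}[(\ov{\log Z_{n,t}^\theta})^k]\le C_kn^{k/2}=C_kn^{(1/3)k+(1/6)k}$ for all even $k$, coming from Proposition \ref{prop: burke}; and one pass of the loop (Lemma \ref{lemma: inductive argument}) converts excess $\delta$ in all moments of $\log Z_{n,t}^\theta$ into excess roughly $\delta/2$ per power of $s_0^+$ (Corollary \ref{cor: log Z to s}), hence a truncation level $\tau=n^{2/3+\delta/2+\epsilon}$, hence $H_{b,\tau}(B_0(\tau))\sim\tau^{b/2}=n^{(1/3+\delta/4+\epsilon/2)b}$, hence excess $\delta/3$ in all moments of $\log Z_{n,t}^\theta$ on a slightly smaller $\theta$-interval (Lemma \ref{lemma: s to log Z}). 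This also explains why the $\epsilon$-loss in the theorem is intrinsic to the method: the truncation level can only be set at the current, non-sharp bound for $s_0^+$, so a single pass cannot produce the sharp exponent, and one iterates finitely many times until the geometrically shrinking excess drops below $\epsilon$. Your claim that the $\epsilon$-sharp bounds at orders $\le p$ ``force'' the $\epsilon$-sharp bound at order $p+1$ is therefore precisely the step that fails.
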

This result should be compared to that in \cite{SV}, where the following bounds were obtained for the corresponding moments
\begin{equation}\label{eqn: SV-moment}
\begin{split}
    \mathbb{E}[|\ov{\log Z_{n,t}^\theta}|^{p}]&\le C(\theta,p)n^{(1/3)p} \quad 0<p<3 \\
 \mathbb{E}_{n,t}^\theta [|s_0|^p]&\le C(\theta,p)n^{(2/3)p} \quad 0<p<3. 
\end{split}
\end{equation}
The $n$-dependence in \eqref{eqn: SV-moment} is optimal with no $\epsilon$-loss, but only low moments can be controlled.

Theorem \ref{thm: main-moments} is based on an inductive argument involving two inequalities. A crucial tool is an expression for the $k$-th cumulant of $\log Z_{n,t}^\theta$ as a sum of multilinear expressions in expectations of products of quenched cumulants of $s_0^+$, the positive part of $s_0$, as well as lower order powers of $\log Z^\theta_{n,t}$. This relation between the free energy and the first jump in the system leads to a ``scaling relation" which allows us to simultaneously control $s_0^+$ (or $s_0^-$) and $\log Z_{n,t}^\theta$. 

In order to state the expression for the $k$-th cumulant of $\log Z_{n,t}^\theta$, let $H_{n,\sigma^2}(x)$ denote the $n$-th Hermite polynomial with respect to a Gaussian of variance $\sigma^2$, defined in \eqref{eqn: hermite-var}, and $\psi_k(\theta)$ be the $k$-th derivative of the digamma function \eqref{eqn: digamma}. Let $\kappa_k(X)$ denote the $k$-th cumulant of the random variable $X$. The $k$-th cumulant of a function $f$ with respect to the quenched measure in \eqref{eqn: quenched-E} is denoted by $\kappa^\theta_k(f)$. See Section \ref{sec: cumulants} for details.
\begin{theorem}\label{theorem: explicit formula general} For integers $k\geq 2$,
\begin{equation}\label{eqn: explicit formula}
    \begin{split}
&\kappa_k(\log Z_{n,t}^\theta) +n(-1)^{k-1} \psi_{k-1}(\theta)+t\cdot \delta_{k,2}\\
=&\sum_{\pi\in \mathcal{P}} (|\pi|-1)!(-1)^{|\pi|} \sum_{j=1}^{k-1}\binom{k}{j} \prod_{B\in \pi} \mathbb{E}\left[(\ov{\log Z_{n,t}^\theta})^{a_{j,B}}H_{b_{j,B},t}(B_0(t))\right],%\\
%=&\sum_{\pi\in \mathcal{P}} (|\pi|-1)!(-1)^{|\pi|} \sum_{j=1}^{k-1}\binom{k}{j} \prod_{B\in \pi} (-1)^{b_{j,B}}\sum_{\substack{\ell_1+\cdots+\ell_{a_{j,B}}=b_{j,B}\\ \ell_i\ge 0}}\frac{b_{j,B}!}{\ell_1!\cdots \ell_{a_{j,B}}!}\mathbb{E}\left[\prod_{i=1}^{a_{j,B}} \kappa_{\ell_i}^\theta(s_0^+)\right].\\
%&\mathbb{E}[\overline{\log Z^\theta_{n,t}}^{a_{j,B}} H_{b_{j,B},t}(B_0(t))]=(-1)^{b_{j,B}}\sum_{\substack{\ell_1+\cdots+\ell_{a_{j,B}}=b_{j,B}\\ \ell_i\ge 0}}\frac{b_{j,B}!}{\ell_1!\cdots \ell_{a_{j,B}}!}\mathbb{E}[\prod_{i=1}^{a_{j,B}} \kappa_{\ell_i}^\theta(s_0^+)].
\end{split}
\end{equation}
 where $\mathcal{P}$ ranges over partitions $\pi$ of $\{1,\ldots, k\}$, $a_{j,B}=|B \cap\{1,\ldots, j\}|$, $b_{j,B}=|B \cap \{j+1,\ldots,k\}|=|B|-a_{j,B}$, and $\delta_{i,j}$ is the Kronecker delta function.  We can omit any product of blocks that has a block $B$ completely contained inside $\{j+1,\ldots,k\}$, as well as any partition that contains a singleton.  

Moreover, each factor in the products appearing in \eqref{eqn: explicit formula} has an expression in terms of quenched cumulants of $s_0^+$:
\begin{equation*}
\mathbb{E}\left[(\ov{\log Z_{n,t}^\theta})^{a}H_{b,t}(B_0(t))\right]=(-1)^{b}\sum_{\substack{\ell_1+\cdots+\ell_{a}=b\\ \ell_i\ge 0}}\frac{b!}{\ell_1!\cdots \ell_{a}!}\mathbb{E}\left[\prod_{i=1}^{a} \kappa_{\ell_i}^\theta(s_0^+)\right],
\end{equation*}
where we use the convention $\kappa_0^\theta(s_0^+):=\ov{\log Z_{n,t}^\theta}$.
\end{theorem}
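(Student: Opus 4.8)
The plan is to derive both displays from Gaussian integration by parts applied to the Brownian environment, exploiting the fact that $\theta$ is dual to $s_0$ in the sense that differentiating $\log Z_{n,t}^\theta$ in $\theta$ produces $E_{n,t}^\theta[s_0]$. I would begin with the second identity, since it is the cleaner one and feeds into the first. The key observation is that by Proposition \ref{prop: burke}, $\log Z_{n,t}^\theta = \sum_j r_j^\theta(t) - B_0(t) + \theta t$, and the $r_j^\theta(t)$ are independent of $B_0$; hence the joint law of $(\log Z_{n,t}^\theta, B_0(t))$ is that of $(S - B_0(t) + \theta t, B_0(t))$ with $S$ an independent sum. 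A direct computation of $\mathbb{E}[(\ov{\log Z_{n,t}^\theta})^a H_{b,t}(B_0(t))]$ then reduces, via the orthogonality and generating-function properties of Hermite polynomials, to extracting the coefficient of the appropriate power in an exponential generating function. Concretely, I would use the identity $\mathbb{E}[e^{\lambda B_0(t)} H_{b,t}(B_0(t)) g(B_0(t))]$-type manipulations, or equivalently repeatedly apply Gaussian integration by parts $\mathbb{E}[B_0(t) F] = t\,\mathbb{E}[F']$ to peel off the Hermite polynomial, each derivative hitting either another copy of $B_0(t)$ inside the remaining Hermite factor or the $-B_0(t)$ inside a factor of $\ov{\log Z_{n,t}^\theta}$. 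Tracking which of the $a$ factors of $\ov{\log Z_{n,t}^\theta}$ absorbs how many derivatives produces exactly the multinomial sum over $\ell_1 + \cdots + \ell_a = b$, with $\frac{b!}{\ell_1!\cdots\ell_a!}$ counting the ways to distribute the $b$ derivatives, the sign $(-1)^b$ coming from the $-B_0(t)$, and the residual $\mathbb{E}[\prod_i \partial^{\ell_i}(\ov{\log Z_{n,t}^\theta})]$-type terms needing to be identified with $\mathbb{E}[\prod_i \kappa_{\ell_i}^\theta(s_0^+)]$.

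That identification is the heart of the matter: I would show that differentiating $\log Z_{n,t}^\theta$ repeatedly in the Gaussian direction of $B_0$ is the same as differentiating in $\theta$, up to the explicit drift $\theta t$. Since $\frac{\mathrm{d}}{\mathrm{d}\theta}\log Z_{n,t}^\theta = E_{n,t}^\theta[s_0]$ and more generally the $m$-th $\theta$-derivative of $\log Z_{n,t}^\theta$ equals the $m$-th quenched cumulant $\kappa_m^\theta(s_0)$ (this is the standard fact that log-partition-function derivatives generate cumulants of the conjugate variable), and since the Brownian IBP direction and the $\theta$-derivative differ precisely by the deterministic $\theta s_0$ term in the exponent, one gets $\partial^{\ell}(\ov{\log Z}) \leftrightarrow \kappa_\ell^\theta(s_0)$ for $\ell \geq 1$ and the convention $\kappa_0^\theta(s_0^+) := \ov{\log Z_{n,t}^\theta}$ for $\ell = 0$. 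The passage from $s_0$ to $s_0^+$ requires the decay condition \eqref{eqn: f-condition} with $\nu < \theta$ to justify the integration by parts and to handle the negative tail; I would need the hypothesis $\nu<\theta$ precisely to ensure the relevant integrals converge so that $\kappa_\ell^\theta(s_0^+)$ is well-defined and the IBP boundary terms vanish.

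For the first display \eqref{eqn: explicit formula}, I would start from the moment-generating/cumulant-generating function of $\log Z_{n,t}^\theta$ and use the multiplicativity coming from Proposition \ref{prop: burke}: $\kappa_k(\log Z_{n,t}^\theta) = n\,\kappa_k(\log(1/X_\theta)) + \kappa_k(-B_0(t) + \theta t)$. The cumulants of $\log(1/X_\theta) = -\log X_\theta$ are classical: $\kappa_k(-\log X_\theta) = (-1)^k \psi_{k-1}(\theta)$ for $k \geq 2$ (polygamma functions), which supplies the term $n(-1)^{k-1}\psi_{k-1}(\theta)$ after moving it to the left side, and $\kappa_2(-B_0(t)) = t$ supplies the $t\,\delta_{k,2}$ term. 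The remaining content is then to re-expand the right-hand side: rather than using independence, one writes $\kappa_k$ of the sum $\ov{\log Z_{n,t}^\theta}$ by splitting the $k$ slots into the first $j$ and the last $k-j$, applying the moment–cumulant formula over partitions $\pi$ with the factor $(|\pi|-1)!(-1)^{|\pi|}$, and recognizing the mixed moments $\mathbb{E}[(\ov{\log Z})^{a}B_0(t)^{b}]$ as $\mathbb{E}[(\ov{\log Z})^{a}H_{b,t}(B_0(t))]$ plus lower-order corrections — the Hermite polynomial is exactly what makes the Gaussian-direction expansion close. The combinatorial bookkeeping here — verifying that the blocks contained entirely in $\{j+1,\dots,k\}$ and the singleton partitions can be dropped (because they contribute via $\mathbb{E}[H_{b,t}(B_0(t))] = 0$ for $b \geq 1$ and via exact cancellation of the pure-$B_0$ cumulants against the polygamma/linear terms already extracted) — is the step I expect to be the main obstacle, and the $\binom{k}{j}$ summed over $j = 1, \dots, k-1$ is the signature of choosing which coordinates of the $k$-fold product get the "polynomial" versus the "Hermite" treatment. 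I would organize this by introducing a two-variable generating function $\mathbb{E}[\exp(u\,\ov{\log Z_{n,t}^\theta} + v\,B_0(t))]$, taking $\log$, and reading off the coefficient of $u^j v^{k-j}/(j!(k-j)!)$, which makes the partition sum and the Hermite substitution appear automatically from the standard formula $\log \mathbb{E}[e^{vB_0(t)}(\cdots)]$ and the expansion $e^{vB_0(t)} = \sum_b \frac{v^b}{b!} H_{b,t}(B_0(t)) e^{v^2 t/2}$-type relation; the Gaussian factor $e^{v^2 t/2}$ is what produces the $t\,\delta_{k,2}$ term cleanly.
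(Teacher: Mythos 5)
Your treatment of the first display is essentially the paper's: the Burke identity gives $\kappa_k(R)=n(-1)^k\psi_{k-1}(\theta)$ with $\ov{R}=\ov{\log Z^\theta_{n,t}}+B_0(t)$, multilinearity of joint cumulants produces the $\binom{k}{j}$ sum, the moment--cumulant partition formula and the Hermite generating function \eqref{eqn: hermite-t-gf} produce the partition sum with the $(|\pi|-1)!(-1)^{|\pi|}$ weights and the $t\,\delta_{k,2}$ term, and the omitted blocks/singletons die because $\E[H_{b,t}(B_0(t))]=0$ for $b\ge 1$ and $\E[\ov{\log Z^\theta_{n,t}}]=0$. This is Lemmas \ref{Lemma-cumulant expression} and \ref{lemma: joint cumulants to hermite}.

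The gap is in your derivation of the second display. You assert that the $r_j^\theta(t)$ are independent of $B_0(t)$, so that $(\log Z^\theta_{n,t},B_0(t))$ has the law of $(S-B_0(t)+\theta t,\,B_0(t))$ with $S$ an independent sum. Proposition \ref{prop: burke} only says the $r_j$ are i.i.d.; it does not decouple them from $B_0(t)$, and they cannot be decoupled: that factorization would give $\E[\ov{\log Z^\theta_{n,t}}\,B_0(t)]=-t\asymp n$, whereas the correct value is $-\E[E^\theta_{n,t}[s_0^+]]\asymp n^{2/3}$ --- this is exactly the $a=b=1$ case of the identity you are trying to prove, and the source of \eqref{eqn: logZvar}. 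Computing $\E[(\ov{\log Z^\theta_{n,t}})^aH_{b,t}(B_0(t))]$ from your claimed joint law would therefore produce a wrong (and trivial) answer. Relatedly, your identification of the Gaussian-direction derivative with the $\theta$-derivative, hence with quenched cumulants of $s_0$, followed by a ``passage from $s_0$ to $s_0^+$'' attributed to the decay condition, is not correct: differentiating in $\theta$ brings down $s_0$, while the Cameron--Martin shift of the forward path $B_0|_{[0,t]}$ in the direction $\mathbbm{1}_{[0,t]}$ perturbs the energy by $-\delta s_0^+$ (the backward path, which governs the event $s_0<0$, is untouched). The appearance of $s_0^+$ is structural, not a convergence issue, and $\E[\prod_i\kappa^\theta_{\ell_i}(s_0)]\neq\E[\prod_i\kappa^\theta_{\ell_i}(s_0^+)]$. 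What is needed is the paper's Lemma \ref{lemma: expectation of log times hermite}: by Girsanov, $\E[(\ov{\log Z^{\theta,-\delta s_0^+}_{n,t}})^j]=\E[e^{\delta B_0(t)-\delta^2t/2}(\ov{\log Z^\theta_{n,t}})^j]$, and differentiating $k$ times in $\delta$ at $0$, the left side yields the multinomial sum of products of quenched cumulants of $s_0^+$ (with $\kappa_0^\theta(s_0^+)=\ov{\log Z^\theta_{n,t}}$ for the undifferentiated factors and a sign $(-1)^{\ell_i}$ per factor), while the right side yields $H_{k,t}(B_0(t))$. Without this lemma, or an equivalent Malliavin computation of the derivative of $\log Z^\theta_{n,t}$ along $\mathbbm{1}_{[0,t]}$, the second display is not established.
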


The case $k=2$ was previously obtained by Sepp\"al\"ainen and Valk\'o in \cite{SV}.  For explicit expressions when $k=3$ or $k=4$, see Lemma \eqref{lemma: 3rd cumulant explicit} and Corollary \eqref{cor: explicit 4} respectively.   

\subsection{Outline of paper}
In Section \ref{sec: basics}, we introduce some basic definitions, and review elementary properties of the stationary polymer which appeared in previous literature. We also introduce the notation used throughout the paper.

In Section \ref{sec: IBP}, we use the Cameron-Martin-Girsanov theorem to derive formulas of ``integration by parts" type, relating the positive part of the first jump, $s_0^+$, to the free energy, $\log Z_{n,t}^\theta$, by perturbing the path $B_0(t)$, $t\ge 0$. These formulas are generalizations of a relation in \cite{SV}, which was used to derive the variance estimate 
\begin{equation}
cn^{2/3} \le \Var(\log Z_{n,t}^\theta)\le Cn^{2/3}\label{eq: var estimate}
\end{equation}
for some $n$-independent constants $c$, $C>0$.

Section \ref{sec: convexity} serves as an illustration of the general methodology used to derive Theorem \ref{thm: main-moments}, exploiting the reciprocal relation between $s_0^+$ and $\log Z_{n,t}^\theta$. Using convexity of the free energy of the stationary polymer, we give an alternate, shorter proof of the upper bound of the variance estimate \eqref{eq: var estimate}, first obtained in \cite{SV}.

In Section \ref{sec: formulas}, we exploit Gaussian integration by parts to derive a formula for the cumulants of $\log Z_{n,t}^\theta$ in terms of multilinear expressions in expectations of lower moments of $\log Z_{n,t}^\theta$ and quenched cumulants of $s_0^+$. The formula, which appears in Theorem \ref{theorem: explicit formula general}, is a generalization of the variance identity in \cite{SV}, and it facilitates an inductive analysis of the moments of $\log Z_{n.t}^\theta$: higher central moments of the free energy are estimated by lower moments, as well as lower moments of $s_0^+$.

In Section \ref{sec: estimates}, we use the formula in Theorem \ref{theorem: explicit formula general} to obtain near-optimal bounds on the central moments of the free energy of the stationary polymer, as well as annealed moments of the first jump in the system. Our proof is iterative, combining two inequalities to improve bounds on $\log Z^\theta_{n,t}$ using estimates on the tail of $s_0^+$, and vice versa, with a ``fixed point" at the optimal values of the exponents $(\chi,\xi)=(1/3,2/3)$. An important observation here is that a high probability bound of the form $s_0^+\ll \tau$ implies that $\log Z_{n,t}^\theta$ is insensitive to perturbations of the boundary path $B_0(s)$, $0\le s\le t$ that affect it only for $s\gg \tau$.

\paragraph{Acknowledgements} P.S. wishes to thank H.T. Yau for his hospitality at NTU in Taipei, and for discussions about the OY polymer that led to the results presented in this paper. He also thanks Benjamin Landon for discussions about the polymer. P.S.'s work is partially supported by NSF Grant DMS 1811093.  C.N. would like to thank Hans Chaumont for useful feedback.

\section{Preliminaries and notation}\label{sec: basics}
In this paper, we denote by $\mathbb{P}$ and $\mathbb{E}$ the probability measure, resp.\ expectation on the common probability space $\Omega$ where the two-sided Brownian motions $(B_n(t))_{t\in \mathbb{R}}$, $n=0,1,2,\ldots$ are defined. For a random $X$ on $\Omega$, we denote the centered random variable as follows: 
\[\overline{X}:=X-\mathbb{E}[X].\]
The covariance and variance with respect to $\mathbb{E}$ are respectively denoted by
\begin{align*}
\mathbb{C}\mathrm{ov}(X,Y):=\mathbb{E}[XY]-\mathbb{E}[X]\mathbb{E}[Y]\quad \text{ and }\quad 
\mathbb{V}\mathrm{ar}(X):=\mathbb{C}\mathrm{ov}(X,X)=\mathbb{E}[(\ov{X})^2].
\end{align*}

\subsection{Cumulants}\label{sec: cumulants}
The main input for the computations presented in this paper is Proposition \ref{prop: burke}. That result provides explicit formulas for the cumulants of $s_0$, the first jump in the system. To explain this, introduce the gamma function, defined for $\theta >0$ by
\begin{equation}\label{eqn: gamma-def}
\Gamma(\theta)=\int_0^\infty s^{\theta-1} e^{-s}\,\mathrm{d}s.
\end{equation}
The \emph{digamma} function is the logarithmic derivative of $\Gamma$
\begin{equation}\label{eqn: digamma}
\psi_0(s)=\frac{\Gamma'(s)}{\Gamma(s)}.
\end{equation}
The higher derivatives are denoted by $\psi_k$, $k=1,2,\dots$
\[\psi_k(s)=\frac{\mathrm{d}^k}{\mathrm{d}s^k}\psi_0(s).\]
We have $(-1)^k \psi_k(s)<0$ for any $k\in \N$ and $s>0$, \cite{S}.
By taking expectations in equation \eqref{eqn: burke}, we find
\begin{equation}\label{eqn: cgf}
\mathbb{E}[\log Z_{n,t}^\theta]=-n\psi_0(\theta)+\theta t.
\end{equation}
The relation \eqref{eqn: cgf} gives an expression for the expected cumulant generating function of $s_0$, the first jump in the system. 

Recall that for a random variable $X$ with exponential moments, the $k$-th cumulant, denoted by $\kappa_k(X)$, is equal to the $k$-th derivative at zero of the $\log$-moment generating function. To define the \emph{quenched cumulants}, let  $0<\delta<1$ and let $f:\mathbb{R}^n \rightarrow \mathbb{R}$ satisfy
\eqref{eqn: f-condition}. The cumulant generating function of $f$ is given by
\begin{equation}\label{eqn: delta-Z}
 \log Z_{n,t}^{\theta,\delta f}:= \log \int_{-\infty <s_0<\ldots <s_{n-1}<t}e^{\delta f(s_0,\cdots,s_{n-1})-\mathcal{E}_{n,t}^\theta(s_0,\cdots,s_{n-1})}\underline{\mathrm{d}s}.
\end{equation}
The $k$-th \emph{quenched cumulant} with respect to $E_{n,t}^\theta[\cdot ]$ is then
\[\kappa_k^\theta(f):= \frac{\mathrm{d}^k}{\mathrm{d}\delta^k}\log Z_{n,t}^{\theta,\delta f}\Big|_{\delta=0}.\]
For example,
\begin{align*}
    \kappa_1^\theta(f)=E_{n,t}^\theta[f]\quad \text{ and } \quad
    \kappa_2^\theta(f)=E_{n,t}^\theta[f^2]-(E_{n,t}^\theta[f])^2.
\end{align*}
Note that we suppress the dependence on $n$ and $t$ from the notation for simplicity.

Differentiating \eqref{eqn: cgf} with respect to $\theta$, we have
\begin{equation}\label{eqn: psis}
\mathbb{E}[\kappa^\theta_k(s_0)]=t\delta_{k,1}-n\psi_k(\theta).
\end{equation}
Thus, Proposition \ref{prop: burke}  implies that all \emph{expected} quenched cumulants of $s_0$ for $k\ge 2$ are of order $n$. Similarly, Proposition \ref{prop: burke} implies that
%the common density of $r_1^\theta,\ldots r_n^\theta$ is
%\[\frac{1}{\Gamma(\theta)}e^{-\theta y} e^{-e^{-y}}\,\mathrm{d}y,\]
 for each $t>0$, $k\ge 1$, and $1\le j\le n$:   
\begin{equation}\label{eqn: r_j-var}
\kappa_{k+1}(r_j^\theta(t))=(-1)^{k+1}\psi_k(\theta).
\end{equation}

\subsection{A priori bounds}
In this section, we collect a few basic bounds on the quantities we will be interested in under the condition \eqref{eqn: char-dir-1}.
For $x,y\in \mathbb{R}$, we denote the minimum and maximum of $x$ and $y$ by
\begin{equation*}
    x\wedge y =\min\{x,y\}\quad \text{ and } \quad x\vee y=\max\{x,y\}.
\end{equation*}
The positive and negative parts of $x$ are denoted by
\begin{align*}
    x^+=\max\{0,x\}\quad \text{ and }\quad 
    x^-=\max\{0,-x\}.
\end{align*}

An immediate consequence of Proposition \ref{prop: burke} is that $\log Z_{n,t}^\theta$ has finite exponential moments.  Moreover, if we define
\begin{equation}\label{eqn: R-def}
R:=\sum_{j=1}^n r_j^\theta(t),
\end{equation}
%For $p\ge 1$, \eqref{eqn: burke} implies
%\begin{align*}
%\mathbb{E}[|\log Z^\theta_{n,t}|^p]^{1/p}&\le \theta %t+\mathbb{E}[|B_0(t)|^p]^{1/p}+\mathbb{E}[|R|^p]^{1/p}\\
%&\le C(\theta,p)n.
%\end{align*}
%In particular, $\mathbb{E}[|\log Z^\theta_{n,t}|^p]<\infty$ for every $p>0$.
we see that for $p\geq 1$, the centered free energy $\overline{\log Z_{n,t}^\theta}$ satisfies
\begin{equation}\label{eqn: a-priori-centered}
\begin{split}
\mathbb{E}[|\overline{\log Z^\theta_{n,t}}|^p]^{1/p}\le \mathbb{E}[|B_0(t)|^p]^{1/p}+\mathbb{E}[|\overline{R}|^p]^{1/p}\le C'(\theta,p)(\sqrt{t}+\sqrt{n}).
\end{split}
\end{equation}
From \cite[Lemma 4.4]{SV}, we also have
\begin{equation}\label{eqn: moment-a-priori}
    \mathbb{E}^\theta[|s_0|^p]\le C(\theta,p)n^p \quad \text{ for every } p>0.
\end{equation}
Expressing cumulants in terms of moments, we have
\begin{equation*}|\kappa^\theta_k(s_0^+)|\le C(k)E^{\theta}_{n,t}[(s_0^+)^k].
\end{equation*}
Combining this with \eqref{eqn: moment-a-priori} gives
\[\mathbb{E}[|\kappa_k^\theta(s_0^+)|^p]^{\frac{1}{p}}\le C(k) n^{k}<\infty \quad \text{ for every } k\in \N \text{ and } p\geq 1.\]

\section{Gaussian integration  by parts}\label{sec: IBP}
The Hermite polynomials are defined by the formula
\[H_k(x)=e^{-\frac{x^2}{2}}\frac{\mathrm{d}^k}{\mathrm{d}x^k}e^{\frac{x^2}{2}}, \quad k=0,1,2,\ldots.\]
The polynomials are orthogonal with respect to the standard Gaussian measure $\frac{1}{\sqrt{2\pi}}e^{-\frac{x^2}{2}}$. The Hermite generating function is \cite[Eqn. (1.1)]{N}
\begin{equation}\label{eqn: hermite-gf}
    e^{\lambda x-\frac{\lambda^2}{2}}=\sum_{n=0}^\infty \frac{\lambda^n}{n!}H_n(x).
\end{equation}
For $t>0$, we also define the generalized Hermite polynomials, with variance $t$ by
\begin{equation}\label{eqn: hermite-var}
    H_{k,t}(x):=t^{\frac{k}{2}}H_k\big(\frac{x}{\sqrt{t}}\big).
\end{equation}
Rescaling \eqref{eqn: hermite-gf}, we have
\begin{equation}\label{eqn: hermite-t-gf}
    e^{\lambda x-\frac{\lambda^2t}{2}}=\sum_{n=0}^\infty \frac{\lambda^n}{n!}H_{n,t}(x).
\end{equation}

Recall that the cumulants of $s_0^+$ with respect to the quenched measure $P^\theta_{n,t}$ are given by
\begin{equation}
\kappa_k^\theta(s_0^+)=\frac{\mathrm{d}^k}{\mathrm{d}\delta^k}  \log Z_{n,t}^{\theta,\delta s_0^+}\big|_{\delta=0}\,\,\,\,\,\,\text{ for } k\ge 1.
\end{equation}
For $k=0$, we use the convention:
\begin{equation}\label{eqn: kappa0-def}
    \kappa_0^\theta(s_0^+):=\overline{\log Z_{n,t}^\theta}.
\end{equation}

\begin{lemma}\label{lemma: expectation of log times hermite}
 For $t>0$, $j,k\ge 1$,
\begin{equation}\label{eqn: hermite-ibp}
\mathbb{E}[(\overline{\log Z^\theta_{n,t}})^j H_{k,t}(B_0(t))]=(-1)^k\sum_{\substack{\ell_1+\cdots+\ell_j=k\\ \ell_i\ge 0}}\frac{k!}{\ell_1!\cdots \ell_j!}\mathbb{E}\left[\prod_{i=1}^j \kappa_{\ell_i}^\theta(s_0^+)\right].
\end{equation}
\begin{proof}
Let $0<\delta<\min\{\theta,1\}$. The expectation 
\[\mathbb{E}[(\log Z^{\theta,-\delta s_0^+}_{n,t}-\mathbb{E}[\log Z_{n,t}^\theta])^j]\]
equals
\[\mathbb{E}\big[\big(\log \int_{0<s_0<\ldots< s_{n-1}<t}e^{\theta s_0 -B_0(s_0)-\delta s_0^+ +\mathcal{E}_{n,t}(s_0,\dots,s_{n-1})}\underline{\mathrm{d}s}-\mathbb{E}[\log Z_{n,t}^\theta]\big)^j\big].\]
By the Cameron-Martin-Girsanov Theorem \cite[Proposition 4.1.2]{N}, this equals 
\[\mathbb{E}[e^{\delta B_0(t)-\frac{\delta^2}{2}t} (\overline{\log Z_{n,t}})^j].\]
The exponential factor in the expectation is the generating function of the generalized Hermite polynomials \eqref{eqn: hermite-t-gf} with variance $t$, so   \eqref{eqn: hermite-ibp} follows by repeated differentiation with respect to $\delta$.

To justify the use of differentiation under the expectation, we show the difference quotients are dominated independently of $\delta$. The derivative
\begin{equation}\label{eqn: dk}
\frac{\mathrm{d}^{k}}{\mathrm{d}\delta^{k}}\left(\ov{\log Z_{n.t}^{\theta,-\delta s_0^+}}\right)^j
\end{equation}
is a linear combination of products of the form 
\[\prod_{i=1}^j \kappa^{\theta,-\delta}_{\ell_i}(s_0^+),\]
where $\sum \ell_i = k$, and $\kappa_k^{\theta,-\delta}$ is the $k$-th cumulant with respect to the measure 
\[E_{n,t}^{\theta,-\delta}[\ \cdot ]:=\frac{E_{n,t}^\theta[e^{-\delta s_0^+}\ \cdot\ ]}{E_{n,t}^\theta[e^{-\delta s_0^+}]}.\]

Using the trivial estimate
\[E_{n,t}^{\theta,-\delta}[f]\le e^t E_{n,t}^\theta[f]\]
and expressing the cumulants in terms of moments, we see that this is bounded up to a constant by a sum of terms of the form
\[|E_{n,t}^{\theta}[(s_0^+)^k]||\log Z^{\theta,-\delta}_{n,t}|^b,\]
where $b=\#\{i: \ell_i =0\}$. Since 
\[\log Z_{n,0}^\theta -B_n(t)=\log Z_{n,0}^{\theta-\delta} -B_n(t)\le \log Z_{n,t}^{\theta,-\delta}\le \log Z_{n,t}^\theta,\]
and all moments of $s_0^+$ and $\log Z_{n,t}^\theta$ are finite, we find that the derivative \eqref{eqn: dk} is dominated by an integrable function, so the lemma now follows from the Dominated Convergence Theorem.
\end{proof}
\end{lemma}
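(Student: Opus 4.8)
The plan is to recognize the tilted free energy $\log Z_{n,t}^{\theta,-\delta s_0^+}$ from \eqref{eqn: delta-Z} as the free energy obtained by translating the boundary Brownian motion $B_0$ by a deterministic drift, to turn that translation into an exponential weight via the Cameron--Martin--Girsanov theorem, and then to read off \eqref{eqn: hermite-ibp} by expanding in powers of $\delta$ and matching coefficients. The starting observation is that the integration in $Z_{n,t}^\theta$ runs over $s_0 < s_1 < \cdots < s_{n-1} < t$, so we always have $s_0 < t$; hence replacing $B_0(s)$ by $B_0(s) + h_\delta(s)$ with the drift $h_\delta(s) := \delta\bigl(\max(s,0)\wedge t\bigr)$ changes $-B_0(s_0)$ into $-B_0(s_0) - \delta s_0^+$ for every $s_0$ in the domain of integration; that is, this translation sends $Z_{n,t}^\theta$ to $Z_{n,t}^{\theta,-\delta s_0^+}$. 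Since $h_\delta$ has derivative $\delta\mathbbm{1}_{(0,t)}$, its Cameron--Martin density against $\mathbb{P}$ is $\exp\bigl(\delta B_0(t) - \tfrac{\delta^2 t}{2}\bigr)$.

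First I would fix $0 < \delta < \min\{\theta,1\}$ and apply the Cameron--Martin--Girsanov theorem to the functional $B_0 \mapsto \bigl(\log Z_{n,t}^\theta - \E[\log Z_{n,t}^\theta]\bigr)^j$ of the path $B_0$, obtaining
\[
\E\bigl[(\log Z_{n,t}^{\theta,-\delta s_0^+} - \E[\log Z_{n,t}^\theta])^j\bigr] = \E\bigl[e^{\delta B_0(t) - \frac{\delta^2 t}{2}}\,(\ov{\log Z_{n,t}^\theta})^j\bigr].
\]
Next I would expand the right-hand side using the generating function \eqref{eqn: hermite-t-gf}, $e^{\delta B_0(t)-\frac{\delta^2 t}{2}} = \sum_{k\ge 0}\frac{\delta^k}{k!}H_{k,t}(B_0(t))$, which identifies the target quantity $\E[(\ov{\log Z_{n,t}^\theta})^j H_{k,t}(B_0(t))]$ with $k!$ times the coefficient of $\delta^k$ on the left, i.e. with $\frac{\mathrm{d}^k}{\mathrm{d}\delta^k}\big|_{\delta = 0}\E[(\log Z_{n,t}^{\theta,-\delta s_0^+} - \E[\log Z_{n,t}^\theta])^j]$.

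It then remains to compute this derivative. Writing $V(\delta) := \log Z_{n,t}^{\theta,-\delta s_0^+} - \E[\log Z_{n,t}^\theta]$, the definition of the quenched cumulants following \eqref{eqn: delta-Z} gives $V^{(\ell)}(0) = (-1)^\ell\kappa_\ell^\theta(s_0^+)$ for $\ell\ge 1$, while $V(0) = \ov{\log Z_{n,t}^\theta} = \kappa_0^\theta(s_0^+)$ by the convention \eqref{eqn: kappa0-def}, so that $V^{(\ell)}(0) = (-1)^\ell\kappa_\ell^\theta(s_0^+)$ uniformly in $\ell\ge 0$. Expanding the $k$-th derivative of the $j$-fold product $V(\delta)^j$ by the multinomial Leibniz rule and evaluating at $0$ yields
\[
\frac{\mathrm{d}^k}{\mathrm{d}\delta^k}\Big|_{\delta=0} V(\delta)^j = (-1)^k\sum_{\substack{\ell_1+\cdots+\ell_j = k\\ \ell_i\ge 0}}\frac{k!}{\ell_1!\cdots\ell_j!}\prod_{i=1}^j \kappa_{\ell_i}^\theta(s_0^+),
\]
and taking $\E$ of both sides produces \eqref{eqn: hermite-ibp} — provided the interchange of $\E$ and $\frac{\mathrm{d}^k}{\mathrm{d}\delta^k}$ is legitimate.

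That interchange is the one genuinely technical step, and I expect it to be the main obstacle. To justify it I would show that the difference quotients of $V(\delta)^j$ near $\delta = 0$ are dominated uniformly in $\delta$ by an integrable random variable. The derivatives $\frac{\mathrm{d}^k}{\mathrm{d}\delta^k}V(\delta)^j$ are linear combinations of products $\prod_{i=1}^j V^{(\ell_i)}(\delta)$ with $\sum_i\ell_i = k$, where $V^{(\ell)}(\delta)$ for $\ell\ge 1$ is, up to sign, the $\ell$-th cumulant of $s_0^+$ with respect to the tilted measure $E_{n,t}^\theta[e^{-\delta s_0^+}\,\cdot\,]/E_{n,t}^\theta[e^{-\delta s_0^+}]$, and $V^{(0)}(\delta) = \log Z_{n,t}^{\theta,-\delta s_0^+} - \E[\log Z_{n,t}^\theta]$. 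Since $0\le s_0^+ < t$ on the domain of integration, the weight $e^{-\delta s_0^+}$ lies in $[e^{-t},1]$ for $\delta < 1$, so every tilted moment of $s_0^+$ is at most $e^{t}$ times the corresponding untilted one, and $\log Z_{n,t}^{\theta,-\delta s_0^+}$ is sandwiched between $\log Z_{n,t}^\theta - t$ and $\log Z_{n,t}^\theta$ (the restriction $\delta < \theta$ being what keeps the comparison polymer an honest one). Expressing the tilted cumulants through moments and invoking the a priori bounds on the moments of $s_0^+$ and of $\ov{\log Z_{n,t}^\theta}$ recorded after \eqref{eqn: moment-a-priori} and in \eqref{eqn: a-priori-centered}, one obtains a $\delta$-independent integrable dominating function, and the Dominated Convergence Theorem then legitimizes differentiating under the expectation, completing the proof.
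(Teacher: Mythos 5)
Your proposal is correct and follows essentially the same route as the paper: the Cameron--Martin--Girsanov shift by $\delta(s^+\wedge t)$ turning $Z_{n,t}^{\theta,-\delta s_0^+}$ into the weight $e^{\delta B_0(t)-\delta^2 t/2}$, expansion via the Hermite generating function, the multinomial Leibniz rule producing the quenched cumulants with the sign $(-1)^k$, and a dominated-convergence justification based on the trivial tilting bound $E^{\theta,-\delta}_{n,t}[f]\le e^t E^\theta_{n,t}[f]$. The only (harmless) divergence is your sandwich $\log Z^\theta_{n,t}-t\le \log Z^{\theta,-\delta s_0^+}_{n,t}\le \log Z^\theta_{n,t}$, which is in fact a slightly cleaner version of the comparison the paper uses for the same purpose.
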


The next proposition will lead to a generalization of \eqref{eqn: hermite-ibp}.
\begin{proposition}\label{prop: CMG}
Let $b:[0,t]\rightarrow \mathbb{R}$ be a bounded function, and $F\in L^2(\Omega)$ be a functional, continuous in $C([0,t])$, of the path $B_0(s)$,  $0\le s\le t$. Then
\begin{equation}\label{eqn: IBP}
\begin{split}
&\frac{\mathrm{d}}{\mathrm{d}\delta}\mathbb{E}\big[F\big(B_0(s_0)+\delta\int_0^{s_0} b(s)\mathrm{d}s, 0\le s_0\le t\big)\big]\Big|_{\delta=0}\\
=&~\mathbb{E}[F\big(B_0(s_0), 0\le s_0\le t\big) \int_0^t b(s)\,\mathrm{d}B_0(s)].
\end{split}
\end{equation}
\begin{proof}
Applying the Cameron-Martin-Girsanov theorem gives
\begin{equation}\label{eqn: apply-DCT}
\mathbb{E}\big[F\big(B_0(s_0)+\delta\int_0^{s_0} b(s)\mathrm{d}s\big)\big]=\mathbb{E}\big[e^{\delta\int_0^{s_0} b(s)\mathrm{d}B_0(s)-\frac{\delta^2}{2}\|b\|^2_{L^2([0,t])}} F\big(B_0(s_0)\big)\big].
\end{equation}
By the mean value theorem, for every $0<\delta<1$, there is a $\nu\in (0,\delta)$ such that
\begin{equation}
\begin{split}
&\frac{1}{\delta}\big|e^{\delta\int_0^{s_0} b(s)\mathrm{d}B_0(s)-\frac{\delta^2}{2}\|b\|^2_{L^2([0,1])}}-1\big|\\
\le &~ \left|\int_0^{s_0}b(s)\,\mathrm{d}B_0(s)-\nu \|b\|^2_{L^2([0,t])}\right|e^{\nu \int_0^{s_0} b(s)\mathrm{d}B_0(s)-\frac{\nu^2}{2}\|b\|^2_{L^2([0,1])}}\\
\le &~ \big(\big|\int_0^{s_0}b(s)\,\mathrm{d}B_0(s)\big|+\|b\|^2_{L^2([0,t])}\big)(1+e^{\int_0^{s_0} b(s)\mathrm{d}B_0(s)}).\\
%\le&~ \big(\big|\int_0^{s_0}b(s)\,\mathrm{d}B_0(s)\big|+\|b\|^2_{L^2([0,t])}\big)(1+e^{\int_0^{s_0} b(s)\mathrm{d}B_0(s)}).
\end{split}
\end{equation}

Since $\int_0^{s_0}b(s)\,\mathrm{d}B_0(s)$ is Gaussian with mean zero and variance $\int^{s_0}_0b^2(s)\,\mathrm{d}s$, it follows that the difference quotients
\[\frac{1}{\delta}(e^{\delta\int_0^{s_0} b(s)\mathrm{d}B_0(s)-\frac{\delta^2}{2}\|b\|^2_{L^2([0,1])}}-1)F(B_0(s_0))\]
are dominated by an integrable function, so the result follows by applying the Dominated Convergence Theorem to the right side of \eqref{eqn: apply-DCT}.
\end{proof}
\end{proposition}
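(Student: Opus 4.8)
The statement to prove is Proposition~\ref{prop: CMG}. Here is the plan.

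\bigskip

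The core identity comes from the Cameron--Martin--Girsanov theorem: shifting the path $B_0(s) \mapsto B_0(s) + \delta\int_0^s b(u)\,\mathrm{d}u$ is an absolutely continuous change of measure with Radon--Nikodym density $\exp\bigl(\delta\int_0^t b(s)\,\mathrm{d}B_0(s) - \tfrac{\delta^2}{2}\|b\|_{L^2([0,t])}^2\bigr)$ on the path space $C([0,t])$. Since $F$ depends only on the path restricted to $[0,t]$, the shifted expectation equals $\mathbb{E}\bigl[e^{\delta\int_0^t b\,\mathrm{d}B_0 - \frac{\delta^2}{2}\|b\|^2}\,F(B_0(\cdot))\bigr]$; this gives \eqref{eqn: apply-DCT}. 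Differentiating the exponential factor at $\delta=0$ formally yields the right-hand side of \eqref{eqn: IBP}, so the only real task is to justify interchanging $\tfrac{\mathrm{d}}{\mathrm{d}\delta}$ with $\mathbb{E}$. I would first note that $\int_0^s b(u)\,\mathrm{d}u$, viewed as a function of $s\in[0,t]$, is a genuine element of the Cameron--Martin space (it is absolutely continuous with $L^2$ derivative $b$, since $b$ is bounded on $[0,t]$), which is exactly what the Girsanov theorem requires; a quick remark that the stochastic integral $\int_0^t b(s)\,\mathrm{d}B_0(s)$ is well defined as an Itô integral of the bounded deterministic integrand $b$ should be made explicit.

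\bigskip

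For the differentiation-under-the-integral step, the strategy is the standard dominated-convergence argument the authors already use in Lemma~\ref{lemma: expectation of log times hermite}: write the difference quotient
\[
\frac{1}{\delta}\Bigl(e^{\delta\int_0^t b\,\mathrm{d}B_0 - \frac{\delta^2}{2}\|b\|^2} - 1\Bigr)F(B_0(\cdot)),
\]
apply the mean value theorem to the function $\delta \mapsto e^{\delta X - \frac{\delta^2}{2}\|b\|^2}$ where $X := \int_0^t b\,\mathrm{d}B_0$, and bound the result (for $0<\delta<1$) by $\bigl(|X| + \|b\|_{L^2}^2\bigr)\bigl(1 + e^{X}\bigr)|F|$ uniformly in $\delta$. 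Since $X$ is Gaussian with mean $0$ and variance $\int_0^t b^2(s)\,\mathrm{d}s < \infty$, it has Gaussian tails, so $(|X| + \|b\|^2)(1+e^X)$ has moments of all orders, in particular is in $L^2(\Omega)$; combined with $F \in L^2(\Omega)$, Cauchy--Schwarz shows the dominating function is integrable. The continuity hypothesis on $F$ (as a functional on $C([0,t])$) guarantees that $F(B_0(\cdot) + \delta\int_0^\cdot b)$ converges a.s.\ to $F(B_0(\cdot))$, so the integrand itself converges pointwise as $\delta \to 0$. Dominated convergence then yields \eqref{eqn: IBP}.

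\bigskip

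The main obstacle — though it is more a matter of careful bookkeeping than of genuine difficulty — is making sure the dominating bound is truly uniform in $\delta$ and that all the integrability is in place: one must pin down $\nu \in (0,\delta) \subset (0,1)$ from the mean value theorem, control $e^{\nu X - \frac{\nu^2}{2}\|b\|^2} \le 1 + e^{X}$ crudely, and invoke Gaussianity of $X$ to get $L^2$ (or better) control of the envelope before pairing it with $F \in L^2$ via Cauchy--Schwarz. A secondary point to state cleanly is that the left-hand side of \eqref{eqn: IBP} is even well defined as a derivative — but this falls out of the same dominated-convergence estimate, since it shows the map $\delta \mapsto \mathbb{E}[F(B_0(\cdot)+\delta\int_0^\cdot b)]$ is differentiable at $0$ with the claimed derivative. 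Everything else is a direct application of Girsanov plus the elementary inequalities above.
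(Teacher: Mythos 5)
Your proposal is correct and follows essentially the same route as the paper: apply Cameron--Martin--Girsanov to obtain \eqref{eqn: apply-DCT}, bound the difference quotient of the exponential density via the mean value theorem uniformly for $\delta\in(0,1)$, use Gaussianity of $\int_0^t b\,\mathrm{d}B_0$ together with $F\in L^2(\Omega)$ to produce an integrable dominating function, and conclude by dominated convergence. The only (harmless) redundancy is your appeal to continuity of $F$ for pointwise convergence: once Girsanov has moved the shift into the exponential factor, the integrand's pointwise limit follows from differentiability of $\delta\mapsto e^{\delta X-\delta^2\|b\|^2/2}$ alone.
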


\begin{corollary}\label{cor: exchange}
Let $0<\tau\leq t$, and $j,k\ge 0$. We have
\begin{equation*}
%\label{eqn: truncated-t}
\mathbb{E}[(\overline{\log Z^\theta_{n,t}})^j H_{k,\tau}(B_0(\tau))]=(-1)^k\sum_{\substack{\ell_1+\cdots+\ell_j=k\\ \ell_i\ge 0}}\frac{k!}{\ell_1!\cdots \ell_j!}\mathbb{E}\left[\prod_{i=1}^j \kappa_{\ell_i}^\theta(s_0^+\wedge \tau)\right].
\end{equation*}
\begin{proof}
 Apply Proposition \ref{prop: CMG} with $(\ov{\log Z_{n,t}^\theta})^j$ and $b(s)=\mathbbm{1}_{\{[0,\tau]\}}$, so
 \[\int_0^{s_0}b(s)\mathrm{d}s=s_0^+\wedge \tau.\]
Differentiation inside the expectation is justified as in the proof of Lemma \ref{lemma: expectation of log times hermite}.
\end{proof}
\end{corollary}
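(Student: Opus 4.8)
\textbf{Proof proposal for Corollary \ref{cor: exchange}.}

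The plan is to mimic the proof of Lemma \ref{lemma: expectation of log times hermite}, but replace the constant perturbation $b(s)\equiv 1$ on $[0,t]$ by the perturbation $b(s)=\mathbbm{1}_{[0,\tau]}(s)$, so that the induced shift of the boundary path at time $s_0$ is $\int_0^{s_0} b(s)\,\mathrm{d}s = s_0^+\wedge\tau$ rather than $s_0^+$. Concretely, I would start from the observation that shifting $B_0(s)\mapsto B_0(s)-\delta(s\wedge\tau)$ inside the definition of $Z_{n,t}^\theta$ has the same effect as inserting the extra potential $+\delta(s_0^+\wedge\tau)$ into the quenched measure, i.e. it produces $Z_{n,t}^{\theta,\delta(s_0^+\wedge\tau)}$ in the notation of \eqref{eqn: delta-Z}. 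Applying Proposition \ref{prop: CMG} with $F=(\ov{\log Z_{n,t}^\theta})^j$ and this choice of $b$ (and noting $\int_0^t b(s)\,\mathrm{d}B_0(s)=B_0(\tau)$, $\|b\|_{L^2([0,t])}^2=\tau$) converts the $\delta$-derivative at $0$ of the shifted expectation into $\mathbb{E}[(\ov{\log Z_{n,t}^\theta})^j\, e^{\delta B_0(\tau)-\delta^2\tau/2}]$ evaluated via its Hermite expansion \eqref{eqn: hermite-t-gf} with variance $\tau$.

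The key steps, in order, are: (i) record the identity $Z_{n,t}^{\theta,\delta(s_0^+\wedge\tau)} = \log Z_{n,t}^\theta$ with $B_0(s)$ replaced by $B_0(s)-\delta(s\wedge\tau)$, so that the left side of \eqref{eqn: IBP} with $F=(\ov{\log Z_{n,t}^\theta})^j$ reads $\mathbb{E}[(\log Z_{n,t}^{\theta,\delta(s_0^+\wedge\tau)}-\mathbb{E}[\log Z_{n,t}^\theta])^j]$; (ii) apply Cameron-Martin-Girsanov (as in Proposition \ref{prop: CMG}) to rewrite this as $\mathbb{E}[e^{\delta B_0(\tau)-\delta^2\tau/2}(\ov{\log Z_{n,t}^\theta})^j]$; (iii) differentiate $k$ times in $\delta$ at $\delta=0$: the exponential factor is the generating function $\sum_{k\ge 0}\frac{\delta^k}{k!}H_{k,\tau}(B_0(\tau))$, so the $k$-th derivative at $0$ of the right side produces $\mathbb{E}[(\ov{\log Z_{n,t}^\theta})^j H_{k,\tau}(B_0(\tau))]$, while the $k$-th derivative at $0$ of the left side is, by the chain rule and the definition of quenched cumulants, the sum $(-1)^k\sum_{\ell_1+\cdots+\ell_j=k}\frac{k!}{\ell_1!\cdots\ell_j!}\mathbb{E}[\prod_i\kappa_{\ell_i}^\theta(s_0^+\wedge\tau)]$ (the sign $(-1)^k$ coming from the minus sign in the perturbation $-\delta(s\wedge\tau)$, matching the $-\delta s_0^+$ convention of Lemma \ref{lemma: expectation of log times hermite}); (iv) match the two expansions coefficient by coefficient, using the convention $\kappa_0^\theta(s_0^+\wedge\tau):=\ov{\log Z_{n,t}^\theta}$ for the blocks with $\ell_i=0$, to obtain the claimed formula.

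The one point that needs genuine care — and which I expect to be the main obstacle, though a mild one — is the justification of differentiation under the expectation sign uniformly in $\delta$. This is handled exactly as in Lemma \ref{lemma: expectation of log times hermite}: the perturbed partition function satisfies the two-sided bound $\log Z_{n,0}^{\theta-\delta}-B_n(t)\le \log Z_{n,t}^{\theta,-\delta(s_0^+\wedge\tau)}\le \log Z_{n,t}^\theta$ (the upper bound because $s_0^+\wedge\tau\ge 0$; the lower bound because $s_0^+\wedge\tau\le s_0^+$, so the same chain of inequalities used there applies verbatim), and the quenched cumulants of $s_0^+\wedge\tau$ are controlled by moments of $s_0^+\wedge\tau\le s_0^+$, which are finite by \eqref{eqn: moment-a-priori}; the Gaussian tails of $B_0(\tau)$ combined with the domination estimate in the proof of Proposition \ref{prop: CMG} (with $\|b\|_{L^2}^2=\tau\le t$) then give an integrable dominating function, and the Dominated Convergence Theorem applies. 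Since $\tau\le t$, all the bounds used in the unrestricted case ($\tau=t$) remain valid with $t$ in place of $\tau$ where needed, so no new estimate is required.
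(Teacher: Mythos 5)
Your proposal is correct and is essentially the paper's own argument: the paper likewise proves the corollary by applying Proposition \ref{prop: CMG} with $F=(\ov{\log Z_{n,t}^\theta})^j$ and $b=\mathbbm{1}_{[0,\tau]}$ (so the Cameron--Martin shift of the boundary path is $s_0^+\wedge\tau$, the Girsanov factor is $e^{\delta B_0(\tau)-\delta^2\tau/2}$, and the Hermite expansion has variance $\tau$), with the interchange of differentiation and expectation justified exactly as in Lemma \ref{lemma: expectation of log times hermite}. You simply spell out the coefficient matching and the domination bounds (using $s_0^+\wedge\tau\le s_0^+$ and $\tau\le t$) that the paper leaves implicit.
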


\subsection{Application: Sepp\"al\"ainen and Valk\'o's variance identity}
Recall the notation from \eqref{eqn: R-def}: $R=\sum_{j=1}^n r_j^\theta(t)$.  By Proposition \ref{prop: burke},
\[\overline{R}=\overline{\log Z^\theta_{n,t}}+B_0(t).\]
Squaring both sides, taking expectations, and using \eqref{eqn: r_j-var}, we obtain
\begin{equation}\label{eqn: expand}
    \mathbb{E}[(\overline{R})^2]=n\psi_1(\theta)=\Var(\log Z^\theta_{n,t})+t+2\mathbb{E}[\log Z^\theta_{n,t}B_0(t)].
\end{equation}
Applying the integration by parts formula \eqref{eqn: hermite-ibp} with $j=k=1$ (or \eqref{eqn: IBP} with $b(s)=1_{[0,t]}(s)$), we obtain the identity
\begin{equation}
\mathbb{E}[\log Z^\theta_{n,t} B_0(t)]=-\mathbb{E}[E^\theta_{n,t}[s_0^+]].
\end{equation}
Plugging this into \eqref{eqn: expand} and rearranging yields the key variance identity
\begin{equation}\label{eqn: logZvar}
    \Var(\log Z^\theta_{n,t})=n\psi_1(\theta)-t+2\mathbb{E}^\theta_{n,t}[s_0^+].
\end{equation}
Similar identities relating the variance of a free energy to transversal fluctuations have appeared in several works of Sepp\"al\"ainen and collaborators on studying anomalous fluctuations in KPZ models. See \cite[Theorem 3.6]{SV} and \cite[Theorem 3.7]{S}. One of our main results yields higher order versions of \eqref{eqn: logZvar}.

\section{Convexity proof of Sepp\"al\"ainen and Valk\'o's fluctuation estimate}\label{sec: convexity}
In this section, we present an alternative proof of the estimate
\begin{equation}\label{eqn: the-bound}
    \Var(\log Z_{n,t}^\theta)\le C(\theta)n^{2/3} 
\end{equation}
given the following \emph{characteristic direction} condition
\begin{equation}\label{eqn: char-dir}
|t-n\psi_1(\theta)|\le An^{2/3}.
\end{equation}
\eqref{eqn: the-bound} and the corresponding lower bound were originally obtained by Sepp\"al\"ainen and Valk\'o \cite{SV}. We replace the key step in their proof by the convexity of the free energy.
\begin{lemma}
Almost surely, the function
\[\theta \mapsto \log Z_{n,t}^\theta\]
is convex for all $t$. 
The first derivative with respect to $\theta$ equals
\[E^\theta_{n,t}[s_0],\]
while the second derivative with respect to $\theta$ equals
\[\mathrm{Var}^\theta(s_0):=E_{n,t}^\theta[(s_0-E_{n,t}^\theta[s_0])^2]\ge 0.\]
In particular, for $\eta<\theta<\lambda$, almost surely, we have
\begin{equation}\label{eqn: convexity}
\frac{\log Z_{n,t}^\theta-\log Z_{n,t}^\eta }{\theta-\eta}\le E^\theta_{n,t}[s_0]\le \frac{\log Z_{n,t}^\lambda-\log Z_{n,t}^\theta}{\lambda-\theta}.
\end{equation}
\end{lemma}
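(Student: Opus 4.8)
The plan is to verify the three claims — convexity in $\theta$, the identification of the first two derivatives, and the resulting difference-quotient bounds — by direct differentiation of the integral defining $Z_{n,t}^\theta$. First I would write
\[
Z_{n,t}^\theta = \int_{-\infty<s_0<\cdots<s_{n-1}<t} e^{\theta s_0} \, e^{-B_0(s_0)+B_1(s_0)+\mathcal{E}_{n,t}(s_1,\ldots,s_{n-1})}\,\underline{\mathrm{d}s},
\]
isolating the only $\theta$-dependence, which sits in the factor $e^{\theta s_0}$. Differentiating under the integral sign brings down a factor of $s_0$, and then a second factor of $s_0$, so that
\[
\frac{\partial}{\partial\theta} Z_{n,t}^\theta = Z_{n,t}^\theta \cdot E_{n,t}^\theta[s_0], \qquad \frac{\partial^2}{\partial\theta^2} Z_{n,t}^\theta = Z_{n,t}^\theta \cdot E_{n,t}^\theta[s_0^2].
\]
The claims about $\log Z_{n,t}^\theta$ then follow from the chain rule: $\frac{\partial}{\partial\theta}\log Z_{n,t}^\theta = E_{n,t}^\theta[s_0]$, and $\frac{\partial^2}{\partial\theta^2}\log Z_{n,t}^\theta = E_{n,t}^\theta[s_0^2] - (E_{n,t}^\theta[s_0])^2 = \mathrm{Var}^\theta(s_0) \ge 0$, which is exactly the statement that $\theta\mapsto\log Z_{n,t}^\theta$ is convex.

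The main technical point — and the step I expect to be the real obstacle — is justifying the differentiation under the integral sign, since the domain of integration is unbounded in $s_0$ (it ranges over $(-\infty, s_1)$) and the factor $s_0 e^{\theta s_0}$, while integrable against $e^{\theta s_0}$ near $s_0\to-\infty$, requires a uniform domination argument on a neighborhood of each $\theta$. Concretely, for $\theta$ in a compact interval $[\theta_0, \theta_1]\subset(0,\infty)$ one bounds $|s_0|^2 e^{\theta s_0}\le C_{\theta_0,\theta_1}(e^{(\theta_0/2)s_0} + e^{2\theta_1 s_0})$ for $s_0<0$ and $|s_0|^2 e^{\theta s_0} \le C e^{\theta_1 s_0}$ for $s_0\ge 0$ (recall $s_0<s_1<t$ is bounded above), giving a $\theta$-independent integrable majorant; a.s.\ finiteness of the resulting integral follows because $Z_{n,t}^{\theta'}<\infty$ a.s.\ for all $\theta'>0$, which is guaranteed by Proposition \ref{prop: burke}. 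This legitimizes dominated convergence for the difference quotients defining both derivatives. One should also note that the exceptional null set can be taken independent of $\theta$ and of the order of differentiation by taking a countable dense set of parameters and using continuity, so that the stated identities hold almost surely simultaneously for all $t$ and all $\theta$.

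Finally, the two-sided inequality \eqref{eqn: convexity} is a purely deterministic consequence of convexity. For a convex differentiable function $g$ on an interval and points $\eta<\theta<\lambda$, the slope of the secant from $\eta$ to $\theta$ is at most $g'(\theta)$, which in turn is at most the slope of the secant from $\theta$ to $\lambda$; applying this with $g(\theta) = \log Z_{n,t}^\theta$ and $g'(\theta) = E_{n,t}^\theta[s_0]$ gives precisely
\[
\frac{\log Z_{n,t}^\theta - \log Z_{n,t}^\eta}{\theta-\eta} \le E_{n,t}^\theta[s_0] \le \frac{\log Z_{n,t}^\lambda - \log Z_{n,t}^\theta}{\lambda-\theta},
\]
valid on the almost sure event where convexity and the derivative formula hold. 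No KPZ input is needed here; this lemma is purely structural, and the subsequent sections exploit it by choosing $\eta,\lambda$ near $\theta$ at scale $n^{-1/3}$ and combining with the variance identity \eqref{eqn: logZvar}.
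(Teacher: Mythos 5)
The paper states this lemma without proof, and your argument is exactly the standard one it implicitly relies on: the only $\theta$-dependence of $Z_{n,t}^\theta$ is the factor $e^{\theta s_0}$, so $\theta\mapsto\log Z_{n,t}^\theta$ is (up to sign conventions) a quenched cumulant generating function of $s_0$, whose first two derivatives are the quenched mean and variance, and the secant inequalities are the usual consequence of convexity. Your domination argument for differentiating under the integral (uniform majorant on a compact $\theta$-interval, a.s. finiteness of $Z_{n,t}^{\theta'}$ from Proposition \ref{prop: burke}) is correct and complete.
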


The following computation relates the quenched second moment and variance of $s_0$, to those of $s_0^+$.  For simplicity, in the rest of this section, we write $E=E^\theta_{n,t}$.
\begin{lemma}\label{lemma: Quenched variance expansion}
Almost surely,
\begin{equation}\label{eqn: variance-bd}
    E[(s_0-E[s_0])^2]=E[(s_0^+-E[s_0^+])^2]+E[(s_0^--E[s_0^-])^2]+2E[s_0^+]E[s_0^-].
\end{equation} 
In particular, 
\begin{equation}\label{eqn: particular}
\begin{split}
    \E[E[s_0^+]^2]&\le \E[E[s_0]^2]+2\E[E[s_0^+]E[s_0^-]]\\
    &\le \E[E[s_0]^2]-n\psi_2(\theta).
    \end{split}
\end{equation}
\begin{proof}
By direct computation,
\begin{align*}
  E[(s_0-E[s_0])^2]&=E[((s_0^+-E[s_0^+])-(s_0^--E[s_0^-]))^2]\\
  &=E[(s_0^+-E[s_0^+])^2]+E[(s_0^--E[s_0^-])^2]\\
  &\quad -2E[(s_0^+-E[s_0^+])(s_0^--E[s_0^-])].
\end{align*}
Since $s_0^+$ an $s_0^-$ have disjoint support,
%\[E[s_0^+s_0^-]=0,\]
%we have
\[E[(s_0^+-E[s_0^+])(s_0^--E[s_0^-])]=-E[s_0^+]E[s_0^-],\]
which yields \eqref{eqn: variance-bd}.  All terms in \eqref{eqn: variance-bd} are non-negative, so
  \begin{equation*}%\label{eqn: cross-term-upper-bd}
    0\le E[s_0^+]E[s_0^-]\le \frac{1}{2}E[(s_0-E[s_0])^2]=\frac{1}{2}\kappa_2^\theta(s_0).
  \end{equation*}
  Taking expectations and using \eqref{eqn: psis},
  \begin{equation}\label{eqn: cross-term-exp}
    \mathbb{E}[ E[s_0^+]E[s_0^-]]\le -\frac{n}{2}\psi_2(\theta).
  \end{equation}
  Finally, after expanding, we get
  \begin{equation*}%\label{eqn: E-square}
    E[s_0]^2=E[s_0^+]^2+E[s_0^-]^2-2E[s_0^+]E[s_0^-].
  \end{equation*}
Taking expectations and applying \eqref{eqn: cross-term-exp} yields \eqref{eqn: particular}.

\end{proof}
\end{lemma}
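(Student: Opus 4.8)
The plan is to establish the algebraic identity \eqref{eqn: variance-bd} by a one-line expansion of a square, and then to derive \eqref{eqn: particular} by combining the (pointwise, almost sure) non-negativity of the three terms on the right-hand side of \eqref{eqn: variance-bd} with the annealed cumulant formula \eqref{eqn: psis}. All quenched expectations below are finite almost surely by the a priori moment bounds recorded in Section \ref{sec: basics}, so there is no integrability issue in manipulating them.

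First I would write $s_0=s_0^+-s_0^-$, so that $s_0-E[s_0]=(s_0^+-E[s_0^+])-(s_0^--E[s_0^-])$, and expand the square under $E$. The two ``diagonal'' terms are exactly the first two summands in \eqref{eqn: variance-bd}, and the cross term equals $-2\big(E[s_0^+ s_0^-]-E[s_0^+]E[s_0^-]\big)$. The key observation is that $s_0^+$ and $s_0^-$ have disjoint support, so $s_0^+ s_0^-\equiv 0$ pointwise and hence $E[s_0^+ s_0^-]=0$; the cross term therefore collapses to $+2E[s_0^+]E[s_0^-]$, which gives \eqref{eqn: variance-bd}. Note that it is precisely the sign here — the covariance contributes positively — that makes this identity useful.

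For the second assertion I would argue as follows. Each of the three summands in \eqref{eqn: variance-bd} is non-negative (the first two are quenched variances, the third a product of quenched expectations of non-negative functions), so almost surely $2E[s_0^+]E[s_0^-]\le E[(s_0-E[s_0])^2]=\kappa_2^\theta(s_0)$. Taking $\mathbb{E}$ of both sides and invoking \eqref{eqn: psis} with $k=2$, which gives $\mathbb{E}[\kappa_2^\theta(s_0)]=-n\psi_2(\theta)>0$ since $\psi_2<0$, yields $2\,\mathbb{E}[E[s_0^+]E[s_0^-]]\le -n\psi_2(\theta)$. Finally, expanding $E[s_0]^2=\big(E[s_0^+]-E[s_0^-]\big)^2$ and discarding the non-negative term $E[s_0^-]^2$ gives $E[s_0^+]^2\le E[s_0]^2+2E[s_0^+]E[s_0^-]$ almost surely; taking $\mathbb{E}$ and inserting the previous bound yields \eqref{eqn: particular}.

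I do not expect a genuine obstacle: the argument is elementary algebra plus the already-established identity \eqref{eqn: psis}. The only points that require a moment's care are the disjoint-support cancellation $E[s_0^+ s_0^-]=0$ (which is what produces the favorable sign of the cross term) and the observation that $-n\psi_2(\theta)$ is positive, so that \eqref{eqn: particular} is a nontrivial upper bound rather than a vacuous one.
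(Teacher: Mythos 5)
Your proof is correct and follows essentially the same route as the paper: expand the square using $s_0=s_0^+-s_0^-$, use the disjoint support of $s_0^+$ and $s_0^-$ to turn the cross term into $+2E[s_0^+]E[s_0^-]$, bound that product by $\tfrac12\kappa_2^\theta(s_0)$ via non-negativity of the terms in \eqref{eqn: variance-bd}, and finish with \eqref{eqn: psis} and the expansion of $E[s_0]^2$. No gaps.
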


The following property regarding the map $\theta\mapsto \Var(\log Z_{n,t}^\theta)$ was already used by Sepp\"al\"ainen and Valk\'o. See \cite[Lemma 4.3]{SV}.
\begin{lemma}
For $\theta,\lambda>0$,
\begin{equation}\label{eqn: comparison}
|\Var(\log Z_{n,t}^\lambda)-\Var(\log Z_{n,t}^\theta)|\le n|\psi_1(\lambda)-\psi_1(\theta)|.
\end{equation}
\end{lemma}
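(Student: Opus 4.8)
The plan is to deduce \eqref{eqn: comparison} from the Burke-type representation in Proposition~\ref{prop: burke} together with the additivity of variance for sums of independent random variables. Write $R^\theta := \sum_{j=1}^n r_j^\theta(t)$ as in \eqref{eqn: R-def}; since $B_0(t)$ is independent of the increments $r_j^\theta(t)$, and the $r_j^\theta(t)$ are themselves i.i.d., Proposition~\ref{prop: burke} gives
\[
\Var(\log Z_{n,t}^\theta) = \Var(R^\theta) + \Var(B_0(t)) = n\,\psi_1(\theta) + t,
\]
where I have used \eqref{eqn: r_j-var} with $k=1$, namely $\kappa_2(r_j^\theta(t)) = \psi_1(\theta)$, so that $\Var(R^\theta) = n\psi_1(\theta)$; alternatively one can cite the variance identity \eqref{eqn: logZvar} and note $\Var(B_0(t))=t$. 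The point is that the $t$-contribution is the \emph{same} for every value of the parameter, since the boundary Brownian motion $B_0$ does not depend on $\theta$.

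The key step is therefore simply to subtract: for any $\theta,\lambda>0$,
\[
\Var(\log Z_{n,t}^\lambda) - \Var(\log Z_{n,t}^\theta) = \big(n\psi_1(\lambda) + t\big) - \big(n\psi_1(\theta) + t\big) = n\big(\psi_1(\lambda) - \psi_1(\theta)\big),
\]
and taking absolute values yields \eqref{eqn: comparison} — in fact with equality rather than an inequality. If one prefers to prove only the stated inequality without computing $\Var(\log Z_{n,t}^\theta)$ exactly, one can instead couple the two partition functions on the same probability space (same Brownian motions) and use the convexity/monotonicity provided by the preceding lemma, estimating $\log Z_{n,t}^\lambda - \log Z_{n,t}^\theta$ via \eqref{eqn: convexity}; but the direct computation above is cleaner and self-contained.

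I do not anticipate a genuine obstacle here: the statement is essentially a bookkeeping consequence of Proposition~\ref{prop: burke}. The only point requiring a word of care is the justification that all the variances involved are finite, so that the additivity manipulation is legitimate — this is exactly the a priori bound \eqref{eqn: a-priori-centered}, which guarantees $\log Z_{n,t}^\theta \in L^2$ (indeed all $L^p$), so every term above is well-defined and finite. With that in hand the proof is a two-line computation.
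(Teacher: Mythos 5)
Your argument has a fatal gap at the very first step: Proposition~\ref{prop: burke} asserts that the increments $r_j^\theta(t)$ are i.i.d., but it does \emph{not} assert that $R=\sum_j r_j^\theta(t)$ is independent of $B_0(t)$ --- and it is not. Each $r_j^\theta(t)=\log Z_{j,t}^\theta-\log Z_{j-1,t}^\theta$ is a functional of $B_0$ as well as of $B_1,\dots,B_j$, so the cross term $\mathbb{C}\mathrm{ov}(R,B_0(t))$ does not vanish and your identity $\Var(\log Z_{n,t}^\theta)=n\psi_1(\theta)+t$ is false. This is not a technicality: the paper's own variance identity \eqref{eqn: logZvar} reads $\Var(\log Z_{n,t}^\theta)=n\psi_1(\theta)-t+2\mathbb{E}^\theta_{n,t}[s_0^+]$, so your formula would force $\mathbb{E}^\theta_{n,t}[s_0^+]=t$ identically, whereas along the characteristic direction $t\approx n\psi_1(\theta)$ one has $\mathbb{E}^\theta_{n,t}[s_0^+]\asymp n^{2/3}$ and $\Var(\log Z_{n,t}^\theta)\asymp n^{2/3}$, not $\asymp n$. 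The entire phenomenon studied in the paper is precisely that the covariance $\mathbb{E}[\log Z_{n,t}^\theta B_0(t)]=-\mathbb{E}^\theta_{n,t}[s_0^+]$ is a nontrivial, $\theta$-dependent quantity; if $B_0(t)$ were independent of $R$ there would be nothing to estimate. Your fallback suggestion (couple the environments and use \eqref{eqn: convexity}) is only a gesture and, as written, gives a bound on $\log Z_{n,t}^\lambda-\log Z_{n,t}^\theta$, not on the difference of variances.

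For the record, the paper does not prove this lemma itself but cites \cite[Lemma 4.3]{SV}. The actual argument there uses \emph{two} representations of the variance: from \eqref{eqn: logZvar} and the identity $\mathbb{E}[E^\theta_{n,t}[s_0]]=t-n\psi_1(\theta)$ (equation \eqref{eqn: psis} with $k=1$) one gets both
\begin{equation*}
\Var(\log Z_{n,t}^\theta)=n\psi_1(\theta)-t+2\,\mathbb{E}^\theta_{n,t}[s_0^+]
\qquad\text{and}\qquad
\Var(\log Z_{n,t}^\theta)=t-n\psi_1(\theta)+2\,\mathbb{E}^\theta_{n,t}[s_0^-].
\end{equation*}
Combining these with the monotonicity in $\theta$ of $\mathbb{E}^\theta_{n,t}[s_0^+]$ (nondecreasing) and of $\mathbb{E}^\theta_{n,t}[s_0^-]$ (nonincreasing), for $\lambda>\theta$ the first representation bounds $\Var(\log Z_{n,t}^\lambda)-\Var(\log Z_{n,t}^\theta)$ from below by $n(\psi_1(\lambda)-\psi_1(\theta))$ and the second bounds it from above by $-n(\psi_1(\lambda)-\psi_1(\theta))$, which together give \eqref{eqn: comparison}. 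The monotonicity input is exactly the ingredient your proof is missing.
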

\begin{proof}[Proof of estimate \eqref{eqn: the-bound}]
By \eqref{eqn: convexity} with $\lambda-\theta=\theta-\eta=n^{-1/3}$, 
\[n^{-1/3}|E[s_0]|\le |\log Z_{n,t}^\theta-\log Z_n^\lambda|+|\log Z_{n,t}^\eta-\log Z_{n,t}^{\theta}|.\]
Note that 
\[|\psi_0(\theta)-\psi_0(\lambda)-(\lambda-\theta)\psi_1(\theta)|\le C(\lambda-\theta)^2.\]
Combined with \eqref{eqn: cgf} and \eqref{eqn: char-dir}, we can center the free energies to obtain:
\begin{align*}
  &|\log Z_n^\theta-\log Z_n^\lambda|+|\log Z_n^\eta-\log Z_n^\theta|\\
  \le&~ An^{\frac{1}{3}}\big(|\theta-\lambda|+|\eta-\theta|\big)+ Cn\big((\lambda-\theta)^2+(\eta-\theta)^2\big)+|\overline{\log Z_n^\theta}-\overline{\log Z_n^\lambda}|+|\overline{\log Z_n^\eta}-\overline{\log Z_n^\theta}|\\
  \le&~ Cn^{1/3}+|\overline{\log Z_n^\theta}-\overline{\log Z_n^\lambda}|+|\overline{\log Z_n^\eta}-\overline{\log Z_n^\theta}|.
\end{align*}
Squaring, taking expectations, and using \eqref{eqn: comparison}, we have the bound
\begin{equation}\label{eqn: E2}
\begin{split}
n^{-2/3}\mathbb{E}[E[s_0]^2] &\le C(\theta)(n^{2/3}+\mathbb{E}[|\overline{\log Z_n^\theta}-\overline{\log Z_n^\lambda}|^2]+\mathbb{E}[|\overline{\log Z_n^\eta}-\overline{\log Z_n^\theta}|^2])\\
&\le C(\theta)(n^{2/3}+\Var(\log Z_{n,t}^\theta)+n|\lambda-\theta|+n|\eta-\theta|).
\end{split}
\end{equation}

Using \eqref{eqn: particular}, we find
\[\mathbb{E}[E[s_0^+ ]]\le \mathbb{E}[E[s_0]^2]^{1/2}+C(\theta)n^{1/2}.\]
Finally,  \eqref{eqn: logZvar}, \eqref{eqn: char-dir}, and \eqref{eqn: E2} give
\[\Var(\log Z_{n,t}^\theta)\le Cn^{1/3}(n^{2/3}+\Var(\log Z_{n,t}^\theta))^{1/2},\]
a quadratic relation which implies \eqref{eqn: the-bound}.
\end{proof}

\section{Formulas for $\kappa_k(\log Z_{n,t}^\theta)$}\label{sec: formulas} In order to give exact formulas for $\kappa_k(\log Z^\theta_{n,t})$ we first discuss joint cumulants and their connection to Hermite polynomials. The joint cumulant of the random variables $X_1,\dots, X_k$ is defined by
\begin{equation}
\kappa(X_1,\dots,X_k):=\frac{\partial^k}{\partial\xi_1\dots\partial\xi_k}\log \E[e^{\sum_{j=1}^k\xi_j X_j}]\Big|_{\xi_i=0}.\label{eq: joint culumant formula 1}
\end{equation}
Alternatively, it can be written as a combination of products of expectations of the underlying random variables:
\begin{equation}
\kappa(X_1,\dots,X_k)=\sum_{\pi\in\mathcal{P}}(|\pi|-1)!(-1)^{|\pi|-1}\prod_{B\in \pi}\E\left[\prod_{i\in B} X_i \right]\label{eq: joint cumulant formula 2}
\end{equation}
where $\mathcal{P}$ ranges over partitions $\pi$ of $\{1,\ldots, k\}$ and $|A|$ stands for the size of the set $A$.  Note that the joint cumulant is multilinear.  In the case where $X_1=X_2=\cdots =X_k=X$, the joint cumulant reduces to the $k$-th cumulant of $X$, $\kappa_k(X)$.  Two important properties of cumulants that we will take advantage of are shift-invariance: 
\[
\kappa_k(X+c)=\kappa_k(X) \text{ for } k\geq 2, \text{ where } c \text{ is constant,}
\]
and additivity for independent random variables:
\[
\kappa_k(X+Y)=\kappa_k(X)+\kappa_k(Y) \text{ for any } k, \text{ if } X \text{ and } Y \text { are independent}.
\]
%\textbf{USE EQUATIONS \eqref{eq: 3rd joint cumulant} AND \eqref{eq: 4th joint cumulant} TO CALCULATE $\kappa_3(\log Z)$ and $\kappa_4(\log Z)$.}
%Done
The following lemma relates the $k$-th cumulant of the free energy to a sum of joint cumulants involving the centered free energy Brownian motion $B_0$.

\begin{lemma}\label{Lemma-cumulant expression}
Let $\theta>0$, $t>0$, 
%what about t real?%
 and $n\in\N$.   Then for any integer $k\geq 2$,
\begin{equation}
\kappa_k(\log Z_{n,t}^\theta)= n(-1)^k \psi_{k-1}(\theta)-\sum_{j=0}^{k-1}\binom{k}{j}\kappa(\underbrace{\ov{\log Z_{n,t}^\theta},\ldots,\ov{\log Z_{n,t}^\theta}}_{j\text{-times}},\underbrace{B_0(t),\dots,B_0(t)}_{k-j \text{-times}}).\label{eq: 1st lemma-cumulant expression}
\end{equation}
Note that the $j=0$-th term in the summation is $\kappa_k(B_0(t))$ which equals $0$ when $k\neq 2$, and $t$ when $k=2$.
\end{lemma}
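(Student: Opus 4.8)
The plan is to start from Burke's identity \eqref{eqn: burke} in the form
\[
\log Z_{n,t}^\theta = R - B_0(t) + \theta t, \qquad R = \sum_{j=1}^n r_j^\theta(t),
\]
and exploit the cumulant properties recalled just before the lemma. Since the additive constant $\theta t$ does not affect cumulants of order $k\ge 2$, we have $\kappa_k(\log Z_{n,t}^\theta) = \kappa_k(R - B_0(t))$. The difficulty is that $R$ and $B_0(t)$ are \emph{not} independent — the $r_j^\theta(t)$ are built from $B_0$ together with $B_1,\dots,B_n$ — so we cannot simply split the cumulant additively. Instead, I would write $R = \log Z_{n,t}^\theta + B_0(t) - \theta t$ and center: $\ov R = \ov{\log Z_{n,t}^\theta} + B_0(t)$, using that $\E[B_0(t)]=0$.

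The main step is then a multilinear expansion. By \eqref{eqn: r_j-var} and independence of the $r_j^\theta(t)$, $R$ has $k$-th cumulant $\kappa_k(R) = n(-1)^k\psi_{k-1}(\theta)$ (noting $\kappa_k(r_j^\theta(t)) = (-1)^k \psi_{k-1}(\theta)$ from \eqref{eqn: r_j-var} with the index shifted). On the other hand, $\kappa_k(R) = \kappa_k(\ov R)$ equals the diagonal joint cumulant $\kappa(\ov R, \dots, \ov R)$ ($k$ times). Now substitute $\ov R = \ov{\log Z_{n,t}^\theta} + B_0(t)$ into each of the $k$ slots and use multilinearity of the joint cumulant: this expands $\kappa(\ov R,\dots,\ov R)$ into $\sum_{S\subseteq\{1,\dots,k\}} \kappa(\dots)$ where slot $i$ carries $\ov{\log Z_{n,t}^\theta}$ if $i\in S$ and $B_0(t)$ otherwise. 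Grouping by $j=|S|$ and using the symmetry of the joint cumulant under permuting its arguments gives $\binom{k}{j}$ identical terms, so
\[
n(-1)^k\psi_{k-1}(\theta) = \sum_{j=0}^k \binom{k}{j}\,\kappa(\underbrace{\ov{\log Z_{n,t}^\theta},\dots}_{j},\underbrace{B_0(t),\dots}_{k-j}).
\]
The $j=k$ term is $\kappa_k(\ov{\log Z_{n,t}^\theta}) = \kappa_k(\log Z_{n,t}^\theta)$; solving for it yields exactly \eqref{eq: 1st lemma-cumulant expression} once we move the remaining $j=0,\dots,k-1$ terms to the other side. The $j=0$ term is $\kappa_k(B_0(t))$, which is $t$ for $k=2$ and $0$ for $k\ne 2$ since $B_0(t)$ is Gaussian; this is the parenthetical remark in the statement.

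I expect the only real subtlety — not an obstacle, but the point requiring care — to be the justification that all these joint cumulants are well-defined, i.e. that $\log Z_{n,t}^\theta$ and $B_0(t)$ have all exponential moments so that \eqref{eq: joint culumant formula 1}–\eqref{eq: joint cumulant formula 2} make sense and the multilinearity manipulation is legitimate. This is already recorded in the excerpt: Proposition \ref{prop: burke} gives finite exponential moments for $\log Z_{n,t}^\theta$ via \eqref{eqn: a-priori-centered}, and $B_0(t)$ is Gaussian. With that in hand the argument is purely algebraic: expand by multilinearity, collect terms by the number of $\ov{\log Z}$-slots, and read off the identity. One should also double-check the index bookkeeping between $\psi_k$ and cumulants: $\kappa_{m}(r_j^\theta(t)) = (-1)^{m}\psi_{m-1}(\theta)$ for $m\ge 2$ follows from \eqref{eqn: r_j-var} by replacing $k+1$ there with $m$, which is where the $n(-1)^k\psi_{k-1}(\theta)$ prefactor comes from.
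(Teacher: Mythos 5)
Your proposal is correct and follows essentially the same route as the paper: center via $\ov R=\ov{\log Z_{n,t}^\theta}+B_0(t)$, expand the diagonal joint cumulant $\kappa_k(\ov R)$ by multilinearity and symmetry into binomial-weighted mixed joint cumulants, identify $\kappa_k(R)=n(-1)^k\psi_{k-1}(\theta)$ from \eqref{eqn: r_j-var}, and solve for the $j=k$ term. The extra care you take about the non-independence of $R$ and $B_0(t)$ and about the existence of exponential moments is consistent with (though more explicit than) the paper's argument.
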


\begin{proof}
For convenience, put $A:=\ov{\log Z_{n,t}^\theta}$, $B_0:=B_0(t)$, and $R:=\sum_{j=1}^n r_j^\theta(t)$, so $\overline{R}=A+B_0$.  The shift-invariance of the cumulant along with the mulitlinearity of the joint cumulant gives

\[
\kappa_k(R)=\kappa_k(\overline{R})=\kappa(\underbrace{A+B_0,A+B_0,\dots,A+B_0}_{k\text{-times}})=\sum_{i=0}^k \binom{k}{i}\kappa(\underbrace{A,\dots,A}_{j\text{-times}},\underbrace{B_0,\dots,B_0}_{k-j \text{-times}}).
\]
The left-hand side simplifies to $\kappa_k(R)=n\kappa_k(r_j^\theta(t))=n(-1)^k \psi_{k-1}(\theta)$ by equation $\eqref{eqn: r_j-var}$,  as $R$ is a sum of $n$ i.i.d.\ random variables, while the $k$-th entry in the sum on the right-hand side gives $\kappa_k(\log Z^\theta_{n,t})$.  Rearranging yields the desired result.
\end{proof}
 
 \subsection{ Estimate for $\kappa_3(\log Z_{n,t}^\theta)$}
To motivate  computations  in the upcoming sections we use Lemma \ref{Lemma-cumulant expression} and \cite[Eqn. (4.13)]{SV} to obtain a bound of the optimal order, $n^{(1/3)\cdot3}$% and $n^{(2/3)\cdot3}$
, for the third centered moment of $\log Z_{n,t}^\theta$. %, and the third annealed moment of $\s_0$ .

The joint cumulants simplify when the random variables are centered.  For example, if $X,Y,Z$ are centered, then
\begin{equation}
\kappa(X,Y,Z)=\E[XYZ].\label{eq: 3rd joint cumulant}    
\end{equation}
%And for centered random variables $X,Y,Z,W$,
%\begin{equation}
 %  \kappa(X,Y,Z,W)=\E[XYZW]-\E[XY]\E[ZW]-\E[XZ]\E[YW]-\E[XW]\E[YZ].  \label{eq: 4th joint cumulant}
%\end{equation}
Therefore the third cumulant of a random variable agrees with its third central moment.   
We now use \eqref{eq: 3rd joint cumulant}  to obtain an exact formula for the third cumulant/central moment of the free energy.
\begin{lemma}\label{lemma: 3rd cumulant explicit}  For any $t>0$ and $n\in\N$,
\begin{equation}
\E[(\ov{\log Z_{n,t}^\theta})^3]=\kappa_3(\log Z_{n,t}^\theta)= - n\psi_2(\theta) +6 \E[\overline{\log Z_{n,t}^\theta} E_{n,t}^\theta[s_0^+]]-3\E[\mathrm{Var}^{\theta}(s_0^+)].
\end{equation}
\end{lemma}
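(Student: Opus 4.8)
The plan is to specialize the general cumulant identity of Lemma \ref{Lemma-cumulant expression} to the case $k=3$ and then evaluate the resulting joint cumulants using the centering identity \eqref{eq: 3rd joint cumulant} together with the integration-by-parts formulas of Section \ref{sec: IBP}. First, note that since $\log Z_{n,t}^\theta$ has finite exponential moments (by Proposition \ref{prop: burke}), the third cumulant equals the third central moment, so $\kappa_3(\log Z_{n,t}^\theta) = \E[(\ov{\log Z_{n,t}^\theta})^3]$; this gives the first equality for free.

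For the second equality, I would apply \eqref{eq: 1st lemma-cumulant expression} with $k=3$. The term $n(-1)^3\psi_2(\theta) = -n\psi_2(\theta)$ appears immediately. The sum over $j=0,1,2$ contributes the joint cumulants $\kappa_3(B_0(t))$ (the $j=0$ term, which vanishes since $k=3\neq 2$), $3\,\kappa(A,B_0,B_0)$ (the $j=1$ term), and $3\,\kappa(A,A,B_0)$ (the $j=2$ term), where $A = \ov{\log Z_{n,t}^\theta}$ and $B_0 = B_0(t)$. Since $A$ and $B_0$ are centered, \eqref{eq: 3rd joint cumulant} gives $\kappa(A,B_0,B_0) = \E[A B_0^2]$ and $\kappa(A,A,B_0) = \E[A^2 B_0]$. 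Now I recognize $B_0(t)^2 = H_{2,t}(B_0(t)) + t$ and $B_0(t) = H_{1,t}(B_0(t))$ (from the definition \eqref{eqn: hermite-var}: $H_{1,t}(x) = x$, $H_{2,t}(x) = x^2 - t$). Hence $\E[A B_0^2] = \E[A H_{2,t}(B_0(t))] + t\,\E[A] = \E[A H_{2,t}(B_0(t))]$ since $\E[A] = 0$, and $\E[A^2 B_0] = \E[A^2 H_{1,t}(B_0(t))]$.

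Next, I apply Lemma \ref{lemma: expectation of log times hermite}. For $\E[A H_{2,t}(B_0(t))]$, take $j=1$, $k=2$: the only composition is $\ell_1 = 2$, giving $(-1)^2 \cdot \frac{2!}{2!}\,\E[\kappa_2^\theta(s_0^+)] = \E[\mathrm{Var}^\theta(s_0^+)]$. For $\E[A^2 H_{1,t}(B_0(t))]$, take $j=2$, $k=1$: the compositions are $(\ell_1,\ell_2) \in \{(1,0),(0,1)\}$, each with coefficient $\frac{1!}{1!\,0!} = 1$, and with the convention $\kappa_0^\theta(s_0^+) = \ov{\log Z_{n,t}^\theta}$, so this equals $(-1)^1\big(\E[\kappa_1^\theta(s_0^+)\ov{\log Z_{n,t}^\theta}] + \E[\ov{\log Z_{n,t}^\theta}\,\kappa_1^\theta(s_0^+)]\big) = -2\,\E[\ov{\log Z_{n,t}^\theta}\,E_{n,t}^\theta[s_0^+]]$. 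Assembling: $\kappa_3(\log Z_{n,t}^\theta) = -n\psi_2(\theta) + 3\cdot\big(-2\E[\ov{\log Z_{n,t}^\theta}E_{n,t}^\theta[s_0^+]]\big)\cdot(-1)$ — wait, I must track signs carefully: the sum in \eqref{eq: 1st lemma-cumulant expression} is subtracted, so the contribution is $-3\E[A B_0^2] - 3\E[A^2 B_0] = -3\E[\mathrm{Var}^\theta(s_0^+)] + 6\E[\ov{\log Z_{n,t}^\theta}E_{n,t}^\theta[s_0^+]]$. Combining everything yields exactly the claimed formula.

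The main obstacle, such as it is, is purely bookkeeping: getting the signs right in two places — the overall minus sign in front of the sum in \eqref{eq: 1st lemma-cumulant expression}, and the $(-1)^k$ factor in Lemma \ref{lemma: expectation of log times hermite} — and correctly expanding $B_0(t)^2$ and $B_0(t)$ in the Hermite basis of variance $t$ so that Lemma \ref{lemma: expectation of log times hermite} applies directly. There is no analytic difficulty; all the domination/differentiation-under-the-integral issues were already handled in the proof of Lemma \ref{lemma: expectation of log times hermite}. One should also double-check that the $j=0$ term genuinely drops out (it does, since $\kappa_3$ of a Gaussian is zero), and note that the claim that $\kappa_3 = \E[(\ov{\log Z_{n,t}^\theta})^3]$ only requires centering plus finiteness of moments, both of which follow from Proposition \ref{prop: burke}.
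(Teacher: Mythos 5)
Your proof is correct and follows essentially the same route as the paper: specialize Lemma \ref{Lemma-cumulant expression} to $k=3$, use the centering identity \eqref{eq: 3rd joint cumulant} to turn the joint cumulants into $\E[A^2B_0]$ and $\E[AB_0^2]$, and evaluate these via Lemma \ref{lemma: expectation of log times hermite} (writing $B_0^2-t=H_{2,t}(B_0)$ and $B_0=H_{1,t}(B_0)$, exactly as the paper does implicitly). All signs and coefficients check out.
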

\begin{proof}
For convenience we write $Z=Z_{n,t}^\theta$, $B_0=B_0(t)$, and $E=E^\theta_{n,t}$.
By Lemma \ref{Lemma-cumulant expression}, 
\begin{equation}
\kappa_3(\log Z)=-n\psi_2(\theta)-3\kappa(\ov{\log Z},\ov{\log Z},B_0)-3\kappa(\ov{\log Z},B_0,B_0) \label{eq: 3rd moment -1}
\end{equation}
We now analyze the joint cumulants individually. Equation \eqref{eq: 3rd joint cumulant} and two applications of Lemma \ref{lemma: expectation of log times hermite} give

\begin{equation}
\kappa(\overline{\log Z},\overline{\log Z},B_0)=\E[\overline{\log Z}^2B_0]=-2\E[\overline{\log Z}E[s_0^+]],\label{eq: 3rd moment -2}
\end{equation}
and
\begin{equation}
\kappa(\overline{\log Z},B_0,B_0)=\E[\overline{\log Z}B_0^2]=\E[\log Z (B_0^2-t)]=\E[\textrm{Var}^\theta(s_0^+)].\label{eq: 3rd moment -3}
\end{equation}
Combining equations \eqref{eq: 3rd moment -1}, \eqref{eq: 3rd moment -2}, and \eqref{eq: 3rd moment -3} yields the desired result.
\end{proof}

Next, we use Lemma \ref{lemma: 3rd cumulant explicit} to show that $\kappa_3(\log Z^\theta_{n,t})$ has order at most $n$ when $n$ and $t$ satisfy \eqref{eqn: char-dir-1}.

\begin{corollary}
Assume $n$ and $t$ satisfy
\[
|t-n\psi_1(\theta)|\leq A n^{2/3}.
\]
Then there exists a constant $C=C(\theta)<\infty$ such that for all $n\in \N$
\[
\left|\E[(\ov{\log Z_{n,t}^\theta})^3]\right|\leq C n.
\]
\end{corollary}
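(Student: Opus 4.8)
The plan is to start from the exact formula in Lemma~\ref{lemma: 3rd cumulant explicit},
\[
\E[(\ov{\log Z_{n,t}^\theta})^3] = -n\psi_2(\theta) + 6\,\E[\ov{\log Z_{n,t}^\theta}\,E_{n,t}^\theta[s_0^+]] - 3\,\E[\mathrm{Var}^\theta(s_0^+)],
\]
and bound each of the three terms on the right by $O(n)$. The first term is literally $-n\psi_2(\theta)$, so it contributes $O(n)$ with no work. The main content is therefore to show that the other two terms are also $O(n)$, and the key input for this is the variance estimate \eqref{eq: var estimate} (equivalently \eqref{eqn: the-bound}) established in Section~\ref{sec: convexity} under the characteristic-direction hypothesis \eqref{eqn: char-dir-1}, together with the variance identity \eqref{eqn: logZvar}.

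For the third term, $\E[\mathrm{Var}^\theta(s_0^+)] = \E[E_{n,t}^\theta[(s_0^+)^2]] - \E[(E_{n,t}^\theta[s_0^+])^2] \le \E^\theta_{n,t}[(s_0^+)^2]$, and one should bound $\E^\theta_{n,t}[(s_0^+)^2]$ by $O(n)$. This is where I expect the real work to be. The naive a~priori bound \eqref{eqn: moment-a-priori} only gives $O(n^2)$, which is too weak; instead I would use the chain already exploited in Section~\ref{sec: convexity}: by \eqref{eqn: particular}, $\E[E[s_0^+]^2] \le \E[E[s_0]^2] - n\psi_2(\theta)$, and $\E[E[s_0]^2]$ is controlled via the convexity inequality \eqref{eqn: convexity} and \eqref{eqn: E2} by $C(\theta)(n^{2/3} + \Var(\log Z_{n,t}^\theta)) = O(n^{2/3})$ once \eqref{eqn: the-bound} is known. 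For the \emph{second} moment $\E^\theta_{n,t}[(s_0^+)^2]$ itself (as opposed to $\E[E[s_0^+]^2]$), I would instead invoke the variance identity \eqref{eqn: logZvar}: rearranged, $2\E^\theta_{n,t}[s_0^+] = \Var(\log Z_{n,t}^\theta) - n\psi_1(\theta) + t = O(n^{2/3})$ using \eqref{eqn: the-bound} and \eqref{eqn: char-dir-1}; and a higher analogue of this identity — or, more directly, Corollary~\ref{cor: exchange} with $j=1$, $k=2$ (which expresses $\E[\ov{\log Z}\,H_{2,\tau}(B_0(\tau))]$ in terms of $\E[\mathrm{Var}^\theta(s_0^+\wedge\tau)]$) combined with the a~priori bound $\E[|\ov{\log Z_{n,t}^\theta}|^2] = O(n^{2/3})$ and $\E[H_{2,\tau}(B_0(\tau))^2] = O(\tau^2)$ via Cauchy--Schwarz with $\tau = t \asymp n$ — gives a bound; but the cleanest route is to note $\E[\mathrm{Var}^\theta(s_0^+)] \le \E[\mathrm{Var}^\theta(s_0)] = \E[\kappa_2^\theta(s_0)] = n\psi_1(\theta) = O(n)$ directly from \eqref{eqn: psis}, since $s_0^+$ and $s_0^-$ have disjoint support forces $\mathrm{Var}^\theta(s_0^+) \le \mathrm{Var}^\theta(s_0)$ (indeed $\mathrm{Var}^\theta(s_0^+) \le E^\theta[(s_0^+)^2] \le E^\theta[s_0^2]$ only after recentering — I will use \eqref{eqn: variance-bd}, which shows $\mathrm{Var}^\theta(s_0) = \mathrm{Var}^\theta(s_0^+) + \mathrm{Var}^\theta(s_0^-) + 2E^\theta[s_0^+]E^\theta[s_0^-] \ge \mathrm{Var}^\theta(s_0^+)$). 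Taking expectations gives $\E[\mathrm{Var}^\theta(s_0^+)] \le n\psi_1(\theta)$, so the third term is $O(n)$ with no dependence on \eqref{eqn: char-dir-1} at all.

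For the middle term $\E[\ov{\log Z_{n,t}^\theta}\,E_{n,t}^\theta[s_0^+]]$, I would apply Cauchy--Schwarz:
\[
\bigl|\E[\ov{\log Z_{n,t}^\theta}\,E_{n,t}^\theta[s_0^+]]\bigr| \le \E[(\ov{\log Z_{n,t}^\theta})^2]^{1/2}\,\E[(E_{n,t}^\theta[s_0^+])^2]^{1/2}.
\]
The first factor is $\Var(\log Z_{n,t}^\theta)^{1/2} = O(n^{1/3})$ by \eqref{eqn: the-bound}. The second factor is $\E[(E^\theta[s_0^+])^2]^{1/2} = O(n^{1/3})$ by \eqref{eqn: particular} together with the bound $\E[E[s_0]^2] = O(n^{2/3})$ obtained in Section~\ref{sec: convexity} (this is exactly \eqref{eqn: E2} with \eqref{eqn: the-bound} plugged back in, the same estimate used there to close the argument). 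Hence the middle term is $O(n^{1/3}\cdot n^{1/3}) = O(n^{2/3}) = O(n)$. Combining the three bounds, $|\E[(\ov{\log Z_{n,t}^\theta})^3]| \le C(\theta)n$, as claimed.

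The main obstacle is really just making sure the bound $\E[(E^\theta[s_0^+])^2] = O(n^{2/3})$ is legitimately available: it is, because it is precisely what the convexity argument of Section~\ref{sec: convexity} produces en route to \eqref{eqn: the-bound} — the chain \eqref{eqn: convexity} $\Rightarrow$ \eqref{eqn: E2} $\Rightarrow$ (using \eqref{eqn: the-bound}) $\E[E[s_0]^2] = O(n^{2/3})$ $\Rightarrow$ (using \eqref{eqn: particular}) $\E[(E^\theta[s_0^+])^2] = O(n^{2/3})$. Everything else is Cauchy--Schwarz and bookkeeping. One should double check that the hypothesis \eqref{eqn: char-dir-1} is used exactly where \eqref{eqn: the-bound} needs it and nowhere else, and that no term secretly requires a fourth-moment bound on $\ov{\log Z_{n,t}^\theta}$ — it does not, since only the second moment $\Var(\log Z_{n,t}^\theta)$ enters.
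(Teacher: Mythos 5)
Your proposal follows the paper's proof in its essentials: start from the exact identity of Lemma \ref{lemma: 3rd cumulant explicit}, bound the cross term $\E[\ov{\log Z_{n,t}^\theta}\,E_{n,t}^\theta[s_0^+]]$ by Cauchy--Schwarz using the variance bound \eqref{eqn: the-bound}, and control $\E[\mathrm{Var}^\theta(s_0^+)]$ by comparison with $\E[\mathrm{Var}^\theta(s_0)]$ via \eqref{eqn: variance-bd} and \eqref{eqn: psis}. The one genuine difference is in the second Cauchy--Schwarz factor: the paper uses the annealed second moment $\E[E_{n,t}^\theta[(s_0^+)^2]]\le Cn^{4/3}$, imported from \cite[Eqn.\ (4.13)]{SV}, whereas you use the smaller quantity $\E[(E_{n,t}^\theta[s_0^+])^2]$ and derive its bound internally from the convexity chain of Section \ref{sec: convexity} (\eqref{eqn: E2} plus \eqref{eqn: particular}); this makes the corollary self-contained at the cost of nothing, and is a perfectly valid substitute.

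That said, you have two exponent slips. First, \eqref{eqn: E2} reads $n^{-2/3}\,\E[E[s_0]^2]\le C(n^{2/3}+\Var(\log Z_{n,t}^\theta)+\cdots)$; with \eqref{eqn: the-bound} the right side is $O(n^{2/3})$, so after clearing the prefactor you get $\E[E[s_0]^2]=O(n^{4/3})$, not $O(n^{2/3})$ as you claim. Consequently $\E[(E[s_0^+])^2]^{1/2}=O(n^{2/3})$ and the middle term is $O(n^{1/3}\cdot n^{2/3})=O(n)$, not $O(n^{2/3})$ --- this matches the paper's $Cn$ and is consistent with the expected scaling $E[s_0^+]\asymp n^{2/3}$, whereas your claimed $\E[E[s_0]^2]=O(n^{2/3})$ would contradict the known transversal exponent $\xi=2/3$. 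Second, \eqref{eqn: psis} with $k=2$ gives $\E[\kappa_2^\theta(s_0)]=-n\psi_2(\theta)$, not $n\psi_1(\theta)$. Neither slip affects the conclusion, since both quantities remain $O(n)$, but as written the intermediate claims are false and should be corrected.
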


\begin{proof}
Applying the Cauchy-Schwartz inequality followed by Jensen's inequality, \cite[Eqn. (4.13)]{SV}, and the bound \eqref{eqn: the-bound},
\begin{align*}
\left|\E\left[\overline{\log Z_{n,t}^\theta}E^{\theta}_{n,t}[s_0^+]\right]\right|&\leq \E\left[\left(\overline{\log Z_{n,t}^\theta}\right)^2\right]^{\frac{1}{2}}\E\left[E^{\theta}_{n,t}\left[(s_0^+)^2\right]\right]^{\frac{1}{2}}\\
&\leq C(n^{\frac{2}{3}})^\frac{1}{2}(n^{\frac{4}{3}})^\frac{1}{2}=C n.
\end{align*}

By equation \eqref{eqn: psis}, we have 
\[
0\leq\E[\mathrm{Var}^\theta(s_0^+)]\leq \E[\mathrm{Var}^\theta(s_0)] = -n\psi_2(\theta).
\]
\end{proof}

\subsection{Higher cumulants: Proof of Theorem \ref{theorem: explicit formula general}}
We now develop a systematic method to deal with higher cumulants.    The following lemma expresses the joint cumulants appearing in the sum on the right-hand side of equation \eqref{eq: 1st lemma-cumulant expression} as linear combinations of products of expectations which only involve the free energy and Hermite polynomials of the Brownian motion $B_0$.  After multiple Gaussian integration by parts, the remaining expressions will involve expectations of quenched cumulants rather than the Brownian motion $B_0$, leading to the exact formula in Theorem \ref{theorem: explicit formula general}.  

\begin{lemma}\label{lemma: joint cumulants to hermite} Let $\theta>0$, $t>0$, $n\in \N$, $k\in \N$, and $1\leq j\leq k$. Then   

\begin{align*}
\kappa(&\underbrace{\ov{\log Z_{n,t}^\theta},\dots,\ov{\log Z_{n,t}^\theta}}_{j\text{-times}},\underbrace{B_0(t),\dots,B_0(t)}_{k-j \text{-times}})\\
 &=
\sum_{\pi\in \mathcal{P}} (|\pi|-1)!(-1)^{|\pi| -1} \prod_{B\in \pi} \mathbb{E}\left[(\ov{\log Z_{n,t}^\theta})^{|B \cap\{1,\ldots, j\}|}H_{|B \cap \{j+1,\ldots,k\}|,t}(B_0(t))\right],
\end{align*}
where $\mathcal{P}$ ranges over partitions $\pi$ of $\{1,\ldots, k\}$.  We can omit any partition $\pi$ which has a block $B$ contained in $\{j+1,\ldots, k\}$.  We can also omit any partition $\pi$ which contains a singleton set.
\end{lemma}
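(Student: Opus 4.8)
The plan is to combine the combinatorial formula \eqref{eq: joint cumulant formula 2} for joint cumulants with the Hermite identity from Lemma \ref{lemma: expectation of log times hermite}. First I would apply \eqref{eq: joint cumulant formula 2} to the left side, treating the $j$ copies of $A:=\ov{\log Z_{n,t}^\theta}$ and $k-j$ copies of $B_0(t)$ as the $k$ random variables $X_1,\dots,X_k$, with $X_i = A$ for $i \le j$ and $X_i = B_0(t)$ for $i > j$. This expresses the joint cumulant as $\sum_{\pi\in\mathcal P}(|\pi|-1)!(-1)^{|\pi|-1}\prod_{B\in\pi}\E[\prod_{i\in B}X_i]$. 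Each block factor $\E[\prod_{i\in B}X_i]$ is just $\E[A^{a}B_0(t)^{b}]$ where $a = |B\cap\{1,\dots,j\}|$ and $b = |B\cap\{j+1,\dots,k\}|$, since the variables are grouped by identity.

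The key step is then to convert each monomial expectation $\E[A^a B_0(t)^b]$ into the Hermite form $\E[A^a H_{b,t}(B_0(t))]$. Since $H_{b,t}(x) = x^b + (\text{lower-degree terms in }x)$ and the lower-degree Hermite polynomials have the same structure, one can write $B_0(t)^b$ as a linear combination $\sum_{m\le b} c_{b,m} H_{m,t}(B_0(t))$ (the inverse of the Hermite expansion of powers, with $c_{b,b}=1$). Substituting this termwise would express every block factor, and hence the whole joint cumulant, as a linear combination of products $\prod_B \E[A^{a_B} H_{m_B,t}(B_0(t))]$. However, this naive route produces extra terms with mismatched Hermite indices; the cleaner approach, which I would follow, is to argue directly that the partition-sum with $H_{b,t}$ in place of the raw monomials equals the partition-sum with the raw monomials, because the difference telescopes through the Möbius/partition-lattice structure. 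Concretely, the map sending a set partition to the product of block expectations with Hermite polynomials differs from the product with raw powers precisely by contributions indexed by coarser partitions, and these cancel in the alternating sum $\sum_\pi(|\pi|-1)!(-1)^{|\pi|-1}(\cdots)$ by exactly the same identity that makes \eqref{eq: joint cumulant formula 2} consistent with \eqref{eq: joint culumant formula 1}. An efficient way to make this rigorous is to introduce, in place of $B_0(t)$, the shifted generating object $e^{\lambda B_0(t) - \lambda^2 t/2}$ whose expectation against $A^a$ is controlled by Lemma \ref{lemma: expectation of log times hermite}, compute the joint cumulant generating function, and read off coefficients; the Hermite polynomials appear automatically via \eqref{eqn: hermite-t-gf}.

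The two stated reductions are then short: a block $B$ entirely inside $\{j+1,\dots,k\}$ has $a_B = 0$, so its factor is $\E[H_{b,t}(B_0(t))]$, which vanishes for $b = |B| \ge 1$ by orthogonality of $H_{b,t}$ to constants under the Gaussian measure of variance $t$ — hence any partition containing such a block contributes zero. Likewise, a singleton $\{i\}$ with $i \le j$ gives the factor $\E[A] = \E[\ov{\log Z_{n,t}^\theta}] = 0$, and a singleton with $i > j$ gives $\E[H_{1,t}(B_0(t))] = \E[B_0(t)] = 0$; either way the whole product vanishes, so partitions with singletons can be dropped.

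The main obstacle I anticipate is the bookkeeping in the second step: making precise that replacing raw monomial block-expectations by Hermite block-expectations leaves the alternating partition sum unchanged, without spawning spurious cross terms. Handling this cleanly — either via the generating-function argument above, or by an explicit Möbius-inversion on the partition lattice showing that the correction terms reorganize into contributions indexed by coarser partitions that cancel — is where the real work lies; everything else is direct substitution and the elementary vanishing facts about centered variables and Hermite polynomials. I would also need the dominated-convergence justification for differentiating under the expectation, but this is already supplied by the argument in the proof of Lemma \ref{lemma: expectation of log times hermite} and can simply be cited.
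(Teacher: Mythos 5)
Your proposal is correct and, in the argument you actually commit to, coincides with the paper's proof: substitute the generating function $e^{\lambda B_0(t)-\lambda^2 t/2}=\sum_n\frac{\lambda^n}{n!}H_{n,t}(B_0(t))$ with $\lambda=\xi_{j+1}+\cdots+\xi_k$ into the joint cumulant generating function, note that the resulting additive term $\tfrac{\lambda^2 t}{2}$ has vanishing mixed $\partial_{\xi_1}$-derivative since $j\ge 1$, and read off the partition formula with Hermite block factors. Your vanishing arguments for blocks inside $\{j+1,\dots,k\}$ and for singletons are also exactly those of the paper.
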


%Note: The formula remains unchanged if we removed the stipulation `such that no block $B\in \pi$ is contained in $\{j+1,\ldots, k\}$', as the expectations corresponding to these blocks would evaluate to zero.  We add it simply to shorten the formula.
\begin{proof}
For convenience, again put $A=\ov{\log Z_{n,t}^\theta}$ and $B_0=B_0(t)$. Recalling the generalized Hermite generating function \eqref{eqn: hermite-t-gf}, we have
\[e^{\lambda B_0}= e^{\frac{\lambda^2 t}{2}}\sum_{n=0}^\infty \frac{\lambda^n}{n!} H_{n,t}(B_0).\]
Therefore, 
\begin{align*}
&\log \E\left[e^{(\xi_1+\dots + \xi_j)A+(\xi_{j+1}+\dots + \xi_k)B_0}\right]\\
&=\log \mathbb{E}\left[\sum_{n=0}^\infty e^{(\xi_1+\cdots+\xi_j)A}(\xi_{j+1}+\cdots+\xi_k)^n H_{n,t}(B_0)\right]+\frac{(\xi_{j+1}+\cdots+\xi_k)^2t}{2}.
\end{align*}
Plugging this into the right-hand side of \eqref{eq: joint culumant formula 1}, taking the derivatives $\partial_{\xi_1},\cdots,\partial_{\xi_k}$, evaluating at $\xi_i=0$, and using $\mathbb{E}[H_{n,t}(B_0)]=0$ for $n\ge 1$, we obtain the formula
\[
\kappa(\underbrace{A,\dots,A}_{j\text{ times}},\underbrace{B_0,\dots,B_0}_{k-j \text{ times}})=\sum_{\pi\in \mathcal{P}} (|\pi|-1)!(-1)^{|\pi| -1} \prod_{B\in \pi} \mathbb{E}\left[A^{|B \cap\{1,\ldots, j\}|}H_{|B \cap \{j+1,\ldots,k\}|,t}(B_0)\right]\]
where $\mathcal{P}$ ranges over partitions $\pi$ of $\{1,\ldots, k\}$ such that no block $B\in \pi$ is contained in $\{j+1,\ldots, k\}$.  Finally, if $B$ is a singleton set that is contained in $\{1,\dots,j\}$, then
\[
\mathbb{E}\left[A^{|B \cap\{1,\ldots, j\}|}H_{|B \cap \{j+1,\ldots,k\}|,t}(B_0)\right]=\E[A]=0.
\]
\end{proof}

We can now prove Theorem \ref{theorem: explicit formula general}.

\begin{proof}[Proof of Theorem \ref{theorem: explicit formula general}]
Combine Lemmas \ref{Lemma-cumulant expression}, \ref{lemma: joint cumulants to hermite}, and \ref{lemma: expectation of log times hermite}.
%to get the formula.  The portion about ignoring any product of blocks that has a block $B$ completely contained inside $\{j+1,\ldots,k\}$ comes from Lemma \ref{lemma: joint cumulants to hermite}.  To see why we can ignore any partition that contains a singleton, suppose that $B=\{r\}$ where $1\leq r \leq k$.  Then clearly, we can ignore the contribution of any index $j$ such that $\{r\}$ is contained in $\{j+1,\dots,k\}$, meaning we can ignore any index $j\leq r-1$.   On the other hand, if $j\geq r$, then $a_{j,\{r\}}=1$ and $b_{j,\{r\}}=0$.  This will force 
%\[
%\sum_{\substack{\ell_1+\cdots+\ell_{a_{j,B}}=b_{j,B}\\ \ell_i\ge 0}}\frac{b_{j,B}!}{\ell_1!\cdots \ell_{a_{j,B}}!}\mathbb{E}\left[\prod_{i=1}^{a_{j,B}} \kappa_{\ell_i}^\theta(s_0^+)\right]=\E[\kappa_0^\theta(s_0^+)]=\E[\ov{\log Z_{n,t}^\theta}]=0.
%\]  
\end{proof}

One can verify that the formula for $k=3$ agrees with that in Lemma \ref{lemma: 3rd cumulant explicit}.
%as follows: The only contributing partition is the trivial one, containing the entire set $\{1,2,3\}$.  Letting $B=\{1,2,3\}$, we see that $a_{j,B}=j$ and $b_{j,B}=3-j$ for $j=1,2$.
%We thus have
%\begin{align*}
%\kappa_3&(\log Z_{n,t}^\theta)+n\psi_3(\theta)=\\
%&- \sum_{j=1}^{2}\binom{3}{j}  (-1)^{3-j}\sum_{\substack{\ell_1+\cdots+\ell_{j}=3-j\\ \ell_i\ge 0}}\frac{3-j!}{\ell_1!\cdots \ell_{j}!}\mathbb{E}\left[\prod_{i=1}^{j} \kappa_{\ell_i}^\theta(s_0^+)\right]\\
%&= -3 \sum_{\substack{\ell_1=2}}\frac{2!}{\ell_1!}\mathbb{E}\left[\prod_{i=1}^{1} \kappa_{\ell_i}^\theta(s_0^+)\right]+3 \sum_{\substack{\ell_1+\ell_{2}=1\\ \ell_i\ge 0}}\frac{1!}{\ell_1!\ell_{2}!}\mathbb{E}\left[\prod_{i=1}^{2} \kappa_{\ell_i}^\theta(s_0^+)\right]\\
%&=-3\E[\kappa_2^\theta(s_0^+)]+6\E[\kappa_0^\theta(s_0^+)\kappa_1^\theta(s_0^+)]\\
%&=-3\E[Var(s_0^+)]+6\E\left[\ov{\log Z_{n,t}^\theta}E[s_0^+]\right].
%\end{align*}
%A simple extension of the calculation above gives a slightly simpler formula for the contribution of the trivial partition to the sum on the right-hand side of
For another concrete exact formula, one can verify that the formula for $k=4$ gives
\begin{corollary}\label{cor: explicit 4}
\begin{align*}
\kappa_4(\log Z_{n,t}^\theta)=~& n\psi_3(\theta)+4\mathbb{E}[\kappa_3^\theta(s_0^+)]+12\Cov(E^\theta_{n,t}[s_0^+],(\ov{\log Z_{n,t}^\theta})^2)\\
&-12\Var(E[s_0^+])-12\mathbb{E}[\mathrm{Var}^\theta(s_0^+)\ov{\log Z_{n,t}^\theta}].
\end{align*}
\end{corollary}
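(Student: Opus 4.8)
The plan is to derive Corollary \ref{cor: explicit 4} by specializing the general formula \eqref{eqn: explicit formula} in Theorem \ref{theorem: explicit formula general} to $k=4$, and then simplifying each term. First I would write out the right-hand side of \eqref{eqn: explicit formula}: the outer sum ranges over set partitions $\pi$ of $\{1,2,3,4\}$, weighted by $(|\pi|-1)!(-1)^{|\pi|}$, and the inner sum over $j=1,2,3$ with coefficient $\binom{4}{j}$. For each such $(\pi,j)$ and each block $B\in\pi$ one forms the factor $\mathbb{E}[(\ov{\log Z_{n,t}^\theta})^{a_{j,B}}H_{b_{j,B},t}(B_0(t))]$ with $a_{j,B}=|B\cap\{1,\dots,j\}|$ and $b_{j,B}=|B|-a_{j,B}$. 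The two stated reduction rules drastically cut the list: we drop any partition with a singleton block, and we drop any product containing a block $B\subseteq\{j+1,\dots,k\}$ (i.e.\ with $a_{j,B}=0$). Since $k=4$, the surviving partitions without singletons are only: the one-block partition $\{1234\}$, and the three pairings $\{12|34\}$, $\{13|24\}$, $\{14|23\}$.

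Next I would enumerate, for each of these four partitions, which values of $j\in\{1,2,3\}$ contribute (those for which no surviving block lies entirely in $\{j+1,\dots,4\}$), and record the resulting product of expectation-factors together with its combinatorial weight $(|\pi|-1)!(-1)^{|\pi|}\binom{4}{j}$. For $\pi=\{1234\}$ the single block always meets $\{1,\dots,j\}$, so all three $j$ contribute, giving terms proportional to $\mathbb{E}[(\ov{\log Z})^{j}H_{4-j,t}(B_0)]$ for $j=1,2,3$ with weight $(-1)^{1}\binom{4}{j}= -\binom{4}{j}$. For each pairing, by symmetry a block of size $2$ avoids $\{1,\dots,j\}$ only for certain $j$; one tracks exactly which, and the weight is $(2-1)!(-1)^2\binom{4}{j}=\binom{4}{j}$, producing products of two factors $\mathbb{E}[(\ov{\log Z})^{a}H_{b,t}(B_0)]\cdot\mathbb{E}[(\ov{\log Z})^{a'}H_{b',t}(B_0)]$. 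After assembling, I would convert every factor via the second identity in Theorem \ref{theorem: explicit formula general} (equivalently Lemma \ref{lemma: expectation of log times hermite}): $\mathbb{E}[(\ov{\log Z})^{a}H_{b,t}(B_0)]=(-1)^b\sum_{\ell_1+\cdots+\ell_a=b}\frac{b!}{\ell_1!\cdots\ell_a!}\mathbb{E}[\prod_i\kappa_{\ell_i}^\theta(s_0^+)]$, recalling $\kappa_0^\theta(s_0^+)=\ov{\log Z_{n,t}^\theta}$. Concretely this means substituting $\mathbb{E}[\ov{\log Z}\,H_{1,t}(B_0)]=-\mathbb{E}[E^\theta[s_0^+]]$, $\mathbb{E}[(\ov{\log Z})^2 H_{1,t}(B_0)]=-2\mathbb{E}[\ov{\log Z}\,E^\theta[s_0^+]]$, $\mathbb{E}[\ov{\log Z}\,H_{2,t}(B_0)]=\mathbb{E}[\kappa_2^\theta(s_0^+)]=\mathbb{E}[\mathrm{Var}^\theta(s_0^+)]$, $\mathbb{E}[(\ov{\log Z})^2 H_{2,t}(B_0)]=\mathbb{E}[(\kappa_1^\theta(s_0^+))^2+2\ov{\log Z}\,\kappa_2^\theta(s_0^+)]$, $\mathbb{E}[(\ov{\log Z})^3 H_{1,t}(B_0)]=-3\mathbb{E}[(\ov{\log Z})^2 E^\theta[s_0^+]]$, and $\mathbb{E}[\ov{\log Z}\,H_{3,t}(B_0)]=-\mathbb{E}[\kappa_3^\theta(s_0^+)]$.

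Finally I would collect like terms. The $n(-1)^{k-1}\psi_{k-1}(\theta)$ on the left for $k=4$ is $-n\psi_3(\theta)$, which moves to the right as $+n\psi_3(\theta)$; the $t\,\delta_{k,2}$ term vanishes. The $\mathbb{E}[\kappa_3^\theta(s_0^+)]$ contributions come only from the $j=3$ slice of $\pi=\{1234\}$ and combine to the coefficient $+4$. The covariance term $12\,\Cov(E^\theta[s_0^+],(\ov{\log Z})^2)$ arises from grouping the $\mathbb{E}[(\ov{\log Z})^2 E^\theta[s_0^+]]$ pieces against $\mathbb{E}[(\ov{\log Z})^2]\cdot\mathbb{E}[E^\theta[s_0^+]]$-type products coming from pairings where one block maps to a pure power of $\ov{\log Z}$; here one uses $\mathbb{E}[(\ov{\log Z})^2]=\Var(\log Z_{n,t}^\theta)$ and the definition of $\Cov$. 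The $-12\,\Var(E^\theta[s_0^+])$ term is built from the $(\kappa_1^\theta(s_0^+))^2=E^\theta[s_0^+]^2$ pieces minus the product-of-means pieces $\mathbb{E}[E^\theta[s_0^+]]^2$. The $-12\,\mathbb{E}[\mathrm{Var}^\theta(s_0^+)\ov{\log Z}]$ term gathers the $\ov{\log Z}\,\kappa_2^\theta(s_0^+)$ pieces. I expect the main obstacle to be purely bookkeeping: correctly listing which $j$ each partition contributes to under the two omission rules, getting all the binomial and $(|\pi|-1)!(-1)^{|\pi|}$ weights right, and then matching the Hermite-to-cumulant substitutions so that the $\Cov$ and $\Var$ groupings come out with exactly the stated coefficients $4,12,-12,-12$ and with the product-of-expectations cross terms cancelling as they must. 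As the paper says, "one can verify" — I would double-check by cross-validating the $k=3$ reduction against Lemma \ref{lemma: 3rd cumulant explicit}, which provides a reassuring consistency check on the combinatorial scheme before trusting it at $k=4$.
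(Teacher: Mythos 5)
Your plan is exactly the paper's (implicit) proof: the corollary is stated as a direct verification of Theorem \ref{theorem: explicit formula general} at $k=4$, and your enumeration of the surviving partitions, the contributing values of $j$, and the weights is correct; the terms do assemble into the stated $4,12,-12,-12$. One substitution in your list is off, though: by the second identity in Theorem \ref{theorem: explicit formula general} with $a=b=2$, the compositions $(2,0),(0,2),(1,1)$ carry multinomial weights $1,1,2$, so
\[
\mathbb{E}\big[(\ov{\log Z_{n,t}^\theta})^2 H_{2,t}(B_0(t))\big]=2\,\mathbb{E}\big[(E^\theta_{n,t}[s_0^+])^2\big]+2\,\mathbb{E}\big[\ov{\log Z_{n,t}^\theta}\,\kappa_2^\theta(s_0^+)\big],
\]
with coefficient $2$ (not $1$) on the $(\kappa_1^\theta(s_0^+))^2$ term; with your coefficient the $-6$-weighted $\pi=\{1234\}$, $j=2$ contribution would give only $-6\,\mathbb{E}[(E^\theta_{n,t}[s_0^+])^2]$, which does not combine with the $+12(\mathbb{E}[E^\theta_{n,t}[s_0^+]])^2$ from the $j=2$ pairings to form $-12\,\Var(E^\theta_{n,t}[s_0^+])$.
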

%\begin{proof}
%By the previous remark, the only partitions that will contribute to the sum are $\{\{1,2,3,4\}\}$, $\{\{1,2\},\{3,4\}\}$, $\{\{1,3\},\{
%2,4\}\}$, and $\{\{1,4\},\{2,3\}\}$.
%When $B=\{1,2,3,4\}$, we have $a_{j,B}=j$ and $b_{j,B}=4-j$ for $j=1,2,3$.   
%\end{proof}

\section{Estimates for the central moments: Proof of Theorem \ref{thm: main-moments}}\label{sec: estimates}
The proof of Theorem \ref{thm: main-moments} is obtained by iterating the two inequalities \eqref{eqn: T-bound} and \eqref{eqn: est}. These relate the moments of $s_0^+$ and the central moments of $\log Z^\theta_{n,t}$, successively improving bounds for both. The inequality \eqref{eqn: T-bound} exploits the relationship between $n$ and $t$ given in \eqref{eqn: char-dir-1} to obtain a first order cancellation, see \cite[Lemma 4.2]{SV}. The case $k=2$ was used by the authors of \cite{SV} to estimate the variance of the partition function, and similar bounds appear in works of Sepp\"al\"ainen \cite{S} and Bal\'azs-Cator-Sepp\"al\"ainen \cite{BCS}. The estimate \eqref{eqn: est} is enabled by the expression in Theorem \ref{theorem: explicit formula general}.

The two inequalities can be interpreted as manifestations of the conjectural scaling relations between the fluctuation exponent $\chi$ and the transversal fluctuation exponent $\xi$ for models in the Kardar-Parisi-Zhang class \cite{KS}:
\begin{equation}
    2\xi\le 1+\chi
\end{equation}
(for \eqref{eqn: T-bound}) and
\begin{equation}
    2\chi \le \xi
\end{equation}
for \eqref{eqn: est}. When combined, these give the bounds
\begin{equation}
    \chi \le \frac{1}{3}, \quad \xi \le \frac{2}{3}.
\end{equation}

We give a brief sketch of the argument for the reader's convenience.
\begin{enumerate}
    \item Assuming the existence of constants  $C,\delta>0$ such that for all $\theta \in [1,L]$, $k\ge 1$
    \begin{equation}
    \mathbb{E}[(\overline{\log Z^{\theta}_{n,t}})^k]\le C(k)n^{(1/3+\delta) k},
    \end{equation}
    we show in Section \ref{sec: tail-bound} the estimate
    \begin{equation}\label{eqn: s0mom}
        \mathbb{E}^{\theta}[(s_0^+)^{\mathbf{2}k}]\le C'(k) n^{(4/3+\delta)k +\epsilon}.
    \end{equation}
    for $\theta\in [1,L-1]$ and some $n$-independent constants $C'(k)$.
    This bound corresponds to the scaling inequality $2\xi\le 1+\chi$.  
    \item  Using Theorem \ref{theorem: explicit formula general}, we have an expression for the cumulants of $\log Z_{n,t}^\theta$ of the following form
    \begin{equation}\label{eqn: cumulanthalves}
     \kappa_k(\log Z^\theta_{n,t})=\sum_{j=1}^{k-1} c_{k,j}\prod_{i\in I_j} \mathbb{E}[(\overline{\log Z_{n,t}^\theta})^{\alpha_{j,i}}  H_{\beta_{j,i},t}(B_0(t))],
    \end{equation}
    $\sum_{i\in I_j} \alpha_{j,i}+\beta_{j,i}\le k$ and $\alpha_{j.i}\le j$.
    \item Time truncation argument: by Corollary \ref{cor: exchange}, we can replace $H_{\beta_{j,i},t}(B_0(t))$ by the smaller quantity $H_{\beta_{j,i},\tau}(B_0(\tau))$ provided
    $s_0^+ \ll \tau$. 
    
    Using \eqref{eqn: s0mom}, we have the truncation:
    \begin{align*}
        \mathbb{E}[(s_0^+)^m, s_0^+> n^{2/3+\delta/2+\epsilon}]&\le n^{-(2k-m)(2/3+\delta/\mathbf{2}+\epsilon)}\mathbb{E}[(s_0^+)^{2k}]\\
        &\le 2k\cdot C(k) n^{(2/3)m+(\delta/\mathbf{2}) m-(2k-m+1)\epsilon}.
    \end{align*}
    This is of sub-leading order if we choose $k\gg (m\delta)/\epsilon$.  
    \item Thanks to the previous truncations, we can now estimate \eqref{eqn: cumulanthalves} by effectively replacing $B_0(t)$ by $B_0(\tau)$, where $\tau\gg s_0^+$ is the best current bound for the typical size of $s_0^+$.  Similarly,  we can replace $H_{k,t}(B_0(t))$ by $H_{k,\tau}(B_0(\tau))$. The moments of the centered free energy $\E[(\ov{\log Z_{n,t}^\theta})^k]$ can now be estimated inductively using \eqref{eqn: cumulanthalves} and
     \begin{equation}
        B_0(\tau) \lesssim \tau^{1/2}.
    \end{equation}
    The last relation plays the role of the scaling inequality $2\chi\leq \xi$.
    
\end{enumerate}

\subsection{Tail bound for $s_0^+$}\label{sec: tail-bound}
The following is one of the two pivotal inequalities in our proof. As previously stated, the case $k=2$ appears in \cite{SV}. See also \cite[Lemma 2.2]{MSV}.
\begin{lemma}\label{lem: SV tail bound}
Let $k\geq 2$ be an even integer, $0<\theta\leq L$, and suppose 
\begin{equation}
    |t-n\psi_1(\theta)|\le An^{2/3}.
\end{equation}
Then there exist constants $s,c,C,K>0$, which are uniformly bounded in $\theta$, such that, if   
\[
n^{2/3} \le u \le Kn \quad \text{ and } \quad  \lambda-\theta=c\frac{u}{n}=\theta-\eta,
\] then the following inequalities hold:
 \begin{align}
 \mathbb{P}(P^\theta_{n,t}(s_0^+> u)\ge e^{-su^2/n})\le C\frac{n^k}{u^{2k}}\big(\mathbb{E}[(\overline{\log Z_{n,t}^\theta})^k]+\mathbb{E}[(\overline{\log Z_{n,t}^\lambda})^k]\big)\label{eqn: T-bound}\\
\mathbb{P}(P^\theta_{n,t}(s_0^-> u)\ge e^{-su^2/n})\le C\frac{n^k}{u^{2k}}\big(\mathbb{E}[(\overline{\log Z_{n,t}^\theta})^k]+\mathbb{E}[(\overline{\log Z_{n,t}^\eta})^k]\big)\label{eqn: T-bound 2}
\end{align}
\end{lemma}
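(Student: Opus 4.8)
The plan is to follow the strategy outlined in \cite[Lemma 4.2]{SV} and adapt it to arbitrary even $k$. Fix an even integer $k\geq 2$ and work with $\lambda-\theta=\theta-\eta=cu/n$ for a small constant $c>0$ to be chosen. First I would invoke the convexity lemma (equation \eqref{eqn: convexity}): almost surely
\[
E^\theta_{n,t}[s_0]\le \frac{\log Z_{n,t}^\lambda-\log Z_{n,t}^\theta}{\lambda-\theta},
\]
and more usefully, a version of this with a shifted base point so that the event $\{s_0^+>u\}$ enters. The key observation, which is the heart of \cite{SV}, is that if the quenched probability $P^\theta_{n,t}(s_0^+>u)$ is not small, then on that event the exponential tilt by $\lambda-\theta$ in the $\theta$-direction produces a macroscopic gain in $\log Z^\lambda_{n,t}-\log Z^\theta_{n,t}$ of order $(\lambda-\theta)u = cu^2/n$ — because shifting $\theta$ to $\lambda$ reweights the measure by $e^{(\lambda-\theta)s_0}$, and on a set of non-negligible mass with $s_0>u$ this multiplies $Z$ by at least $\sim e^{(\lambda-\theta)u}\cdot P^\theta_{n,t}(s_0^+>u)$. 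Taking logarithms, on the event in \eqref{eqn: T-bound} one gets
\[
\log Z^\lambda_{n,t}-\log Z^\theta_{n,t}\ \ge\ (\lambda-\theta)u - su^2/n\ =\ (c-s)u^2/n,
\]
so choosing $s<c$ makes this a positive multiple of $u^2/n$.

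Next I would center both free energies. By \eqref{eqn: cgf}, $\mathbb{E}[\log Z^\lambda_{n,t}]-\mathbb{E}[\log Z^\theta_{n,t}]=-n(\psi_0(\lambda)-\psi_0(\theta))+(\lambda-\theta)t$. A second-order Taylor expansion gives $\psi_0(\lambda)-\psi_0(\theta)=(\lambda-\theta)\psi_1(\theta)+O((\lambda-\theta)^2)$, and using the characteristic-direction hypothesis $|t-n\psi_1(\theta)|\le An^{2/3}$ together with $\lambda-\theta=cu/n$ and $u\le Kn$, the first-order terms cancel and one is left with an error of order $An^{2/3}(u/n)+n(u/n)^2 = A u/n^{1/3}+u^2/n$, which (using $u\ge n^{2/3}$, so $u/n^{1/3}\le u^2/n$) is $O(u^2/n)$. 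Hence on the event in \eqref{eqn: T-bound},
\[
\overline{\log Z^\lambda_{n,t}}-\overline{\log Z^\theta_{n,t}}\ \ge\ c' u^2/n
\]
for some $c'>0$, provided $s$ and $c$ are chosen small enough relative to $A,K$. At this point the event in question is contained in $\{\,|\overline{\log Z^\lambda_{n,t}}|+|\overline{\log Z^\theta_{n,t}}|\ge c'u^2/(2n)\,\}$, and Markov's inequality applied with the $k$-th power gives
\[
\mathbb{P}\!\left(P^\theta_{n,t}(s_0^+>u)\ge e^{-su^2/n}\right)\ \le\ \frac{(2n)^k}{(c'u^2)^k}\,\mathbb{E}\big[(|\overline{\log Z^\lambda_{n,t}}|+|\overline{\log Z^\theta_{n,t}}|)^k\big],
\]
and expanding the $k$-th power and using $(a+b)^k\le 2^{k-1}(a^k+b^k)$ yields \eqref{eqn: T-bound} with $C=C(k,L)$. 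The inequality \eqref{eqn: T-bound 2} for $s_0^-$ is entirely symmetric: one uses the lower bound in \eqref{eqn: convexity} with the base point $\eta<\theta$ and the fact that tilting down to $\eta$ rewards $s_0$ being very negative.

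The main obstacle is making the reduction from ``$P^\theta_{n,t}(s_0^+>u)$ not small'' to ``$\log Z^\lambda-\log Z^\theta$ macroscopically large'' fully rigorous and uniform in $\theta\in(0,L]$. The subtle point is that \eqref{eqn: convexity} only controls $\log Z^\lambda-\log Z^\theta$ via $E^\theta_{n,t}[s_0]$, which is an \emph{average} of $s_0$, whereas we need to detect the mass on $\{s_0>u\}$; the standard fix, following \cite{SV}, is to apply convexity to the perturbed partition function $\log Z^{\theta,\delta s_0^+}_{n,t}$ or, equivalently, to bound $E^\theta_{n,t}[s_0]$ from below by $u\cdot P^\theta_{n,t}(s_0^+>u)$ minus the (controlled, by \eqref{eqn: moment-a-priori}) contribution of $s_0^-$, so that when the quenched probability exceeds $e^{-su^2/n}$ one still extracts a gain of order $u\,e^{-su^2/n}$ — and this is where the precise form of the event, with the exponential threshold $e^{-su^2/n}$ matched to the Taylor-error scale $u^2/n$, is essential. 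Keeping all constants ($s,c,C,K$) uniform as $\theta$ ranges over a compact interval requires only that $\psi_1,\psi_2$ be bounded above and below there, which holds; care is needed near $\theta\to 0$ but the statement restricts to $\theta\le L$ and the relevant bounds degrade only at $0$, so fixing any lower endpoint (or tracking the $\theta$-dependence explicitly) suffices.
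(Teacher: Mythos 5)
Your proposal is correct and follows essentially the same route as the paper: the inequality $Z_{n,t}^{\lambda}/Z_{n,t}^{\theta}=E^\theta_{n,t}[e^{(\lambda-\theta)s_0}]\ge e^{(\lambda-\theta)u}P^\theta_{n,t}(s_0>u)$ is precisely the paper's quenched Chernoff/Markov step, and the subsequent centering via \eqref{eqn: cgf}, Taylor expansion of $\psi_0$, use of the characteristic-direction condition, and annealed $k$-th moment Markov inequality all match. The only remark is that the opening appeal to convexity and the closing worry about detecting mass on $\{s_0>u\}$ are both unnecessary: the tilt identity you state in the middle already resolves that ``obstacle'' exactly, and it is the argument the paper uses.
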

\begin{proof} We first prove \eqref{eqn: T-bound}.
Let $r,u>0$. By Markov's inequality,
\begin{align*}
    P^\theta_{n,t}(s_0^+>u)&= P^\theta_{n,t}(s_0>u)\le e^{-ru}E^\theta_{n,t}[e^{rs_0}]=e^{-ru}\frac{ Z_{n,t}^{\theta+r}}{ Z_{n,t}^\theta}.
\end{align*}
Thus, for any $\alpha>0$,
\begin{align*}
    \mathbb{P}(P^\theta_{n,t}(s_0^+>u)\ge e^{-\alpha})
    \le~&\mathbb{P}(\frac{ Z_{n,t}^{\theta+r}}{ Z_{n,t}^\theta} \ge e^{ru-\alpha})\\
    =~& \mathbb{P}(\log Z_{n,t}^{\theta+r}-\log Z_{n,t}^\theta \ge ru-\alpha)\\
    =~&\mathbb{P}(\ov{\log Z_{n,t}^{\theta+r}}-\ov{\log Z_{n,t}^\theta}\ge n(\psi_0(\theta+r)-\psi_0(\theta))-rt+ru-\alpha).
    \end{align*}
    The last equality follows from \eqref{eqn: cgf}.  For  $c_0=c_0(\theta)$ small enough and $0<r<c_0$,
    \[|\psi_0(\theta+r)-\psi_0(\theta)-r\psi_1(\theta)|\le -2r^2\psi_2(\theta).\]
    Since 
    \[|t-n\psi_1(\theta)|\le An^{2/3},\]
    \begin{equation}
    n(\psi_0(\theta+r)-\psi_0(\theta))-rt+ru-\alpha\ge ru-\alpha-rAn^{2/3}+2r^2\psi_2(\theta).\label{eqn: square lb}
    \end{equation}
    Letting 
    \[r=\lambda-\theta=c\frac{u}{n}   \quad \text{ and }\quad \alpha=\frac{s u^2}{n}\]
    we can ensure the right-hand side of \eqref{eqn: square lb} is at-least $ \frac{cu^2}{2n}$ by first fixing $c$ small enough (depending on $\theta$ and $A$) and  then fixing $s,K$ small enough in relation to $c$.  Finally, apply Markov's inequality using the $k$-th moment.

    To prove \eqref{eqn: T-bound 2}, let $r<0$ and $u>0$.  Then
    \[
    P^\theta_{n,t}(s_0^->u)=P^\theta_{n,t}(s_0<-u)\leq e^{-ru}E^\theta_{n,t}[e^{rs_0}].
    \]
    The rest of the argument is the same as in the previous case.
\end{proof}

\begin{corollary}\label{cor: log Z to s}
Suppose 
\begin{equation*}
    |t-n\psi_1(\theta_0)|\le An^{2/3}.
\end{equation*}
Let $k\ge 2$ be an even integer and suppose that
\begin{equation}\label{eqn: cor assumption}
    \mathbb{E}[(\overline{\log Z_{n,t}^\theta})^k]\le C(k)n^{(1/3)k+\delta k}
\end{equation}
for some $\delta>0$ and all $\theta \in [\theta_0,\theta_0+L]$. 

Then, for any $\epsilon>0$, there exists a constant $C(\epsilon,k,L,\theta_0)$ such that
\begin{equation*}
\mathbb{E}^\theta[(s_0^{\pm})^{2k}]\le C(\epsilon,k,L,\theta_0)n^{(4/3)k+\delta k+\epsilon},
\end{equation*}
for all $\theta\in [\theta_0,\theta_0+L-1]$.
\end{corollary}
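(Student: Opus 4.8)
The goal is to upgrade the moment bound \eqref{eqn: cor assumption} for $\overline{\log Z_{n,t}^\theta}$ into a moment bound for $(s_0^\pm)^{2k}$, losing only an $n^\epsilon$ factor. The mechanism is Lemma \ref{lem: SV tail bound}: it converts a tail estimate for the \emph{quenched} probability $P^\theta_{n,t}(s_0^+ > u)$ into a tail estimate controlled by $k$-th central moments of the free energy. The first step is to fix $\theta \in [\theta_0, \theta_0 + L - 1]$ and choose the dyadic scales $u_m = 2^m n^{2/3}$ ranging up to $u \asymp Kn$. For each such $u$ in the admissible window $n^{2/3} \le u \le Kn$, set $\lambda = \lambda(u) = \theta + c u/n$, which stays in $[\theta_0, \theta_0 + L]$ once $K$ is small; then the hypothesis \eqref{eqn: cor assumption} applies to both $\theta$ and $\lambda$, so \eqref{eqn: T-bound} gives
\[
\mathbb{P}\big(P^\theta_{n,t}(s_0^+ > u) \ge e^{-su^2/n}\big) \le C \frac{n^k}{u^{2k}}\cdot 2 C(k) n^{(1/3)k + \delta k} = C'(k)\, n^{(4/3)k + \delta k}\, u^{-2k}.
\]

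\textbf{Splitting the annealed moment.} Next I would decompose $\mathbb{E}^\theta[(s_0^+)^{2k}] = \mathbb{E}\big[E^\theta_{n,t}[(s_0^+)^{2k}]\big]$ according to whether the event $\{P^\theta_{n,t}(s_0^+ > u) \ge e^{-su^2/n}\}$ holds. On the complementary (good) event, the quenched tail decays like $e^{-su^2/n}$ down to scale $u$, and integrating $2k u^{2k-1}\,\mathrm{d}u$ against this Gaussian-type tail from $u \asymp n^{2/3}$ upward gives a contribution of order $(n^{2/3})^{2k}\cdot(\text{polylog or }n^\epsilon) = n^{(4/3)k + \epsilon}$ — this is the desired size, and crucially it is \emph{smaller} than the target $n^{(4/3)k + \delta k + \epsilon}$. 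On the bad event one uses Cauchy--Schwarz: $\mathbb{E}[E^\theta_{n,t}[(s_0^+)^{2k}]\mathbbm{1}_{\text{bad}}] \le \mathbb{E}\big[(E^\theta_{n,t}[(s_0^+)^{2k}])^2\big]^{1/2}\,\mathbb{P}(\text{bad})^{1/2}$, then bounds the quenched $4k$-th moment crudely by the a priori estimate $\mathbb{E}[E^\theta_{n,t}[(s_0^+)^{p}]^2]^{1/2} \le C n^{p}$ (from \eqref{eqn: moment-a-priori}, since $s_0^+ \le |s_0|$), and bounds $\mathbb{P}(\text{bad})^{1/2}$ by the dyadic sum of the tail estimates above. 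At scale $u_m$ the bad-event contribution is then at most $C n^{2k}\cdot\big(n^{(4/3)k + \delta k} u_m^{-2k}\big)^{1/2} \cdot u_m^{2k}$ coming from the weight $u_m^{2k}$ of the dyadic block — wait, one must be careful: the correct accounting weights block $m$ by $u_{m+1}^{2k}$ times $\mathbb{P}(s_0^+ \in (u_m, u_{m+1}], \text{bad})$, and $\mathbb{P}(\text{bad at scale } u_m)^{1/2} \le (C n^{(4/3)k + \delta k} u_m^{-2k})^{1/2}$. Multiplying, block $m$ contributes $\lesssim n^{2k} u_m^{2k}\cdot n^{(2/3)k + \delta k/2} u_m^{-k} = n^{2k + (2/3)k + \delta k/2} u_m^{k}$, which is maximized at $u_m \asymp n$, giving $n^{3k + (2/3)k + \delta k/2}$. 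That is far too large, so the crude $4k$-th moment bound must be sharpened — one should instead iterate, or use that on scales $u \le n$ one can feed in a better a priori bound, or absorb via the structure of Lemma \ref{lem: SV tail bound} more carefully; the cleaner route is to note that we only need $u$ up to $O(n)$ and that for $s_0^+$ exceeding $Kn$ one invokes \eqref{eqn: moment-a-priori} directly with a Markov bound at a fixed large moment, which is super-polynomially small after the tail estimate is bootstrapped.

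\textbf{Main obstacle.} The genuine difficulty is the bad-event / large-$u$ regime: Lemma \ref{lem: SV tail bound} only controls the \emph{event} that the quenched tail is abnormally heavy, not the size of $E^\theta_{n,t}[(s_0^+)^{2k}]$ on that event, so one needs the a priori polynomial bound \eqref{eqn: moment-a-priori} together with the Cauchy--Schwarz trick, and then the arithmetic of exponents must be checked to confirm that, after taking $k$ in \eqref{eqn: cor assumption} large enough relative to $\epsilon/\delta$ (exactly the "$k \gg (m\delta)/\epsilon$" heuristic in the outline of Section \ref{sec: estimates}), the bad-event contribution is of strictly lower order than $n^{(4/3)k + \delta k + \epsilon}$. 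Concretely I expect the final bookkeeping to run: split at $u_* = n^{2/3 + \delta'}$ with $\delta'$ a small multiple of $\delta$; below $u_*$ use the good-event Gaussian tail to get $n^{(4/3)k + \epsilon}$; above $u_*$ sum the dyadic bad-event bounds, where each is polynomially small in $n$ by a power proportional to $k$, so choosing $k$ large (the statement's constant $C$ is allowed to depend on $k$, and $k$ is the same as the moment order we want to bound — so really one applies the already-proved bound at a \emph{higher} even moment $k' \gg k$, which is legitimate since the inductive scheme proves \eqref{eqn: cor assumption} for all even $k$ simultaneously) kills these terms down to $o(n^{(4/3)k})$. Assembling the good and bad contributions yields $\mathbb{E}^\theta[(s_0^\pm)^{2k}] \le C(\epsilon, k, L, \theta_0)\, n^{(4/3)k + \delta k + \epsilon}$, and the argument for $s_0^-$ is identical using \eqref{eqn: T-bound 2} with $\eta = \theta - cu/n$ in place of $\lambda$.
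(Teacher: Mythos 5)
Your identification of Lemma \ref{lem: SV tail bound} as the engine, and your computation of the tail estimate $\mathbb{P}(P^\theta_{n,t}(s_0^+>u)\ge e^{-su^2/n})\le C(k)\,n^{(4/3)k+\delta k}u^{-2k}$ for $n^{2/3}\le u\le Kn$, are exactly right. The gap is in converting this into a moment bound. Your Cauchy--Schwarz step on the bad event is both unnecessary and fatal: it costs a factor $\mathbb{E}[(E^\theta_{n,t}[(s_0^+)^{2k}])^2]^{1/2}\approx n^{2k}$ while retaining only $\mathbb{P}(\mathrm{bad})^{1/2}$, and, as you yourself compute, the resulting exponent is far too large. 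You then list several possible repairs without carrying any out, and the two you lean on are incorrect: (i) the bad-event contribution cannot be made $o(n^{(4/3)k})$ by taking the moment order large --- it is in fact the \emph{dominant} term, of order $n^{(4/3)k+\delta k}\log n$, and is precisely where the $\delta k$ in the conclusion comes from, so your accounting (which attributes the answer to the good event and declares the bad event negligible) is internally inconsistent with the exponent you claim; (ii) the region $u\ge Kn$ is not ``super-polynomially small after a Markov bound at a fixed large moment'': \eqref{eqn: moment-a-priori} gives $\mathbb{P}^\theta(s_0\ge u)\le C(p)n^pu^{-p}$, and $\int_{Kn}^\infty u^{2k-1}n^pu^{-p}\,\mathrm{d}u\asymp n^{2k}$ for every $p>2k$, which exceeds the target; one needs the stronger tail information in \cite[Lemma 4.4]{SV}, not polynomial moments alone.

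The correct route, which is what the paper does, never touches quenched moments on the bad event. Write $\mathbb{E}^\theta[(s_0^{\pm})^{2k}]=\int_0^\infty 2ku^{2k-1}\,\mathbb{P}^\theta(s_0^{\pm}\ge u)\,\mathrm{d}u$ and bound the annealed tail by splitting on the bad event with the trivial bound $P^\theta_{n,t}(s_0^{\pm}\ge u)\le 1$:
\[
\mathbb{P}^\theta(s_0^{\pm}\ge u)=\mathbb{E}\big[P^\theta_{n,t}(s_0^{\pm}\ge u)\big]\le e^{-su^2/n}+\mathbb{P}\big(P^\theta_{n,t}(s_0^{\pm}\ge u)\ge e^{-su^2/n}\big).
\]
The Gaussian term is super-polynomially small for $u\ge n^{2/3}$; the second term is $\le C(k)n^{(4/3)k+\delta k}u^{-2k}$ by Lemma \ref{lem: SV tail bound} together with hypothesis \eqref{eqn: cor assumption} applied at $\theta$ and at $\lambda=\theta+cu/n$ (or $\eta=\theta-cu/n$ for $s_0^-$), which lie in $[\theta_0,\theta_0+L]$ since $\theta\le\theta_0+L-1$ and $u\le Kn$. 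Integrating $2ku^{2k-1}$ against $u^{-2k}$ over $[n^{2/3},Kn]$ yields $n^{(4/3)k+\delta k}\cdot O(\log n)\le Cn^{(4/3)k+\delta k+\epsilon}$, the range $u\le n^{2/3}$ contributes $n^{(4/3)k}$ trivially, and $u\ge Kn$ is disposed of by \cite[Lemma 4.4]{SV}. No dyadic decomposition, no Cauchy--Schwarz, and no appeal to a higher moment $k'\gg k$ is needed.
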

\begin{proof}
Write
\begin{align*}
    \mathbb{E}^\theta [(s_0^{\pm})^{2k}]&\le (n^{2/3})^{2k} +(2k)(Kn)^\epsilon \int_{n^{2/3}}^{Kn} u^{2k-1-\epsilon} \mathbb{P}^{\theta}(s_0^{\pm}\ge u)\mathrm{d}u+C(\theta,k)\\
    &= Cn^{(4/3)k+\delta k+\epsilon} \int_{n^{2/3}}^{Kn} u^{-1-\epsilon}\mathrm{d}u+  O(n^{(4/3)k}).
\end{align*}
In the second step, we have used Lemma \ref{lem: SV tail bound} and the assumption \eqref{eqn: cor assumption}. To control the region $\{u\ge c n\}$, we have applied Lemma \cite[Lemma 4.4]{SV}. Performing the integration, we obtain the result.
\end{proof}

\begin{remark}
The reduction in the upper bound on $\theta$ from $\theta_0+L$ to $\theta_0+L-1$ in the conclusion of Corollary \ref{cor: log Z to s} is due to our application of Lemma \ref{lem: SV tail bound}, which requires that the  assumption \eqref{eqn: cor assumption} hold for $\theta$ and $\lambda$, where $\theta< \lambda \ll \theta+ 1$.
\end{remark}

\subsection{Truncation}
\begin{lemma}\label{lem: truncation}
Suppose that $t=O(n)$ and there exist constants $C_k=C(k,\theta)$ for $k\in \N$, which are locally bounded in $\theta$, such that for some $0<\epsilon<\delta/10$ and all $k\in \N$,
\begin{equation}\label{eqn: truncation assumption}
    \mathbb{E}^{\theta}[(s_0^+)^{2k}]\le C(k,\theta)n^{(4/3)k+\delta k+\epsilon} \quad \text{ for all } n\in \N.
\end{equation}
Then, there exist constants $C(j,l,\theta,\epsilon,\delta,K)$ (locally bounded in $\theta$) such that if $j,\ell,K\ge 1$,
\begin{equation*}
    \big|\mathbb{E}[(\overline{\log Z_{n,t}^\theta})^j H_{\ell,\tau}(B_0(\tau))]-\mathbb{E}[(\overline{\log Z_{n,t}^\theta})^j H_{\ell,t}(B_0(t))]\big|\le C(j,l,\theta,\epsilon,\delta, K)n^{-K}\quad \text{ for all } n\in \N,
\end{equation*}
 where 
%$\tau:=n^{2/3+\delta/2+\epsilon}$.
\begin{equation*}
    \tau=n^{2/3+\delta/2+\epsilon}.
\end{equation*}
\end{lemma}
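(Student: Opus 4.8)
The plan is to exploit the difference formula obtained by subtracting the two integration-by-parts identities in Corollary \ref{cor: exchange}: for $\ell\ge 1$,
\begin{equation*}
\mathbb{E}[(\ov{\log Z_{n,t}^\theta})^j H_{\ell,t}(B_0(t))]-\mathbb{E}[(\ov{\log Z_{n,t}^\theta})^j H_{\ell,\tau}(B_0(\tau))]=(-1)^\ell\!\!\sum_{\ell_1+\cdots+\ell_j=\ell}\frac{\ell!}{\ell_1!\cdots\ell_j!}\Big(\mathbb{E}\big[\prod_i\kappa_{\ell_i}^\theta(s_0^+)\big]-\mathbb{E}\big[\prod_i\kappa_{\ell_i}^\theta(s_0^+\wedge\tau)\big]\Big).
\end{equation*}
So it suffices to show that each difference $\mathbb{E}[\prod_i\kappa_{\ell_i}^\theta(s_0^+)]-\mathbb{E}[\prod_i\kappa_{\ell_i}^\theta(s_0^+\wedge\tau)]$ is $O(n^{-K})$ for every $K$. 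The point is that $s_0^+$ and $s_0^+\wedge\tau$ agree on the event $\{s_0^+\le\tau\}$, and hence the corresponding quenched measures $E_{n,t}^{\theta,\delta s_0^+}$ and $E_{n,t}^{\theta,\delta(s_0^+\wedge\tau)}$ — and therefore all their cumulants — agree on a quenched event of overwhelming probability.

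First I would make this quantitative. Fix a realization of the environment and set $q:=P_{n,t}^\theta(s_0^+>\tau)$. If $q$ is small, then the two tilted measures differ in total variation by $O(q)$ (after the exponential tilt, which costs at most a factor $e^{O(\delta\tau)}$ uniformly for $\delta$ in a fixed small complex neighborhood of $0$), so by Cauchy's integral formula for the derivatives defining the cumulants, $|\kappa_{\ell_i}^\theta(s_0^+)-\kappa_{\ell_i}^\theta(s_0^+\wedge\tau)|\le C(\ell_i)\,q\,\tau^{\ell_i}e^{O(\tau)}$ on that event. Combined with the deterministic a priori bounds $|\kappa_{\ell_i}^\theta(s_0^+)|,|\kappa_{\ell_i}^\theta(s_0^+\wedge\tau)|\le C(\ell_i)n^{\ell_i}$ and $|\ov{\log Z_{n,t}^\theta}|\le C(\sqrt n+\sqrt t)\cdot(\text{something polynomial})$ from Section~\ref{sec: basics}, I get: on $\{q\le e^{-c\tau^2/n}\}$ the difference of the products is bounded by $n^{C(j,\ell)}e^{-c\tau^2/n}\le n^{-K}$ since $\tau^2/n=n^{1/3+\delta+2\epsilon}\to\infty$ polynomially; on the complementary event $\{q> e^{-c\tau^2/n}\}$ I use Hölder together with the a priori bound $|\prod_i\kappa_{\ell_i}^\theta(s_0^+)-\prod_i\kappa_{\ell_i}^\theta(s_0^+\wedge\tau)|\le C n^{\ell}$ and the probability estimate $\mathbb{P}(q>e^{-c\tau^2/n})\le\mathbb{P}(P_{n,t}^\theta(s_0^+>\tau)\ge e^{-c\tau^2/n})$, which by Markov and the truncation assumption \eqref{eqn: truncation assumption} is at most $e^{c\tau^2/n}\,\mathbb{E}[P_{n,t}^\theta(s_0^+>\tau)]\le e^{c\tau^2/n}\tau^{-2k}\mathbb{E}^\theta[(s_0^+)^{2k}]\le C(k)\,e^{c\tau^2/n}n^{-2k(2/3+\delta/2+\epsilon)}\cdot n^{(4/3)k+\delta k+\epsilon}=C(k)\,e^{c\tau^2/n}n^{-(2\epsilon k-\epsilon)}$.

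Here is the delicate balancing. The exponential prefactor $e^{c\tau^2/n}$ is \emph{superpolynomial}, so to close the estimate I cannot simply pick $k$ large; instead I should choose $\tau$ (via the exponent parameter in Corollary \ref{cor: exchange} / the tilt comparison $E_{n,t}^{\theta,-\delta}[f]\le e^t E_{n,t}^\theta[f]$) so that the bad-event probability is controlled by a genuinely smaller tail moment. Concretely: pick the Markov exponent to be $2k$ with $k$ depending on $K$, estimate $\mathbb{P}(P_{n,t}^\theta(s_0^+>\tau)\ge e^{-c\tau^2/n})$ by first bounding it by $\mathbb{P}(s_0^+>\tau/2)+\mathbb{P}(P_{n,t}^\theta(s_0^+>\tau)\ge e^{-c\tau^2/n}, s_0^+\le\tau/2)$, the second term being zero by a pigeonhole (if the quenched mass above $\tau$ is at least $e^{-c\tau^2/n}$ then $E_{n,t}^\theta[(s_0^+)^{2k}]\ge\tau^{2k}e^{-c\tau^2/n}$, which when $\tau^2/n$ is only polynomial forces $s_0^+$ large, hence a contradiction with $s_0^+\le\tau/2$ for $n$ large). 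This removes the problematic exponential and leaves only $\mathbb{P}^\theta(s_0^+>\tau/2)\le(\tau/2)^{-2k}\mathbb{E}^\theta[(s_0^+)^{2k}]\le C(k)n^{-(2\epsilon k - \epsilon)}$, which is $\le n^{-K}$ once $k>(K+\epsilon)/(2\epsilon)$. Then sum over the finitely many compositions $\ell_1+\cdots+\ell_j=\ell$.

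I expect the main obstacle to be precisely this last point: controlling the event where the quenched tail of $s_0^+$ beyond $\tau$ is non-negligible, since the naive Markov bound produces an $e^{\Theta(\tau^2/n)}$ factor that a polynomial moment cannot absorb. The resolution is the observation — reminiscent of the mechanism behind Lemma \ref{lem: SV tail bound} — that $\tau$ has been chosen so that $\tau^2/n$ grows only polynomially in $n$, much more slowly than any power of $\tau$, so that "the quenched measure puts mass $e^{-c\tau^2/n}$ above $\tau$" already implies "$s_0^+$ is much larger than $\tau$" up to events of superpolynomially small probability; after that, everything reduces to the polynomial tail bound \eqref{eqn: truncation assumption} with $k$ chosen large relative to $K/\epsilon$, together with routine Hölder and Cauchy-estimate bookkeeping. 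The condition $\epsilon<\delta/10$ and $t=O(n)$ enter only to guarantee $\tau\le t$ (so Corollary \ref{cor: exchange} applies) and that $\tau^2/n\to\infty$, both of which are immediate from $\tau=n^{2/3+\delta/2+\epsilon}$.
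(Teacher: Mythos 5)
Your reduction via Corollary \ref{cor: exchange} to differences of products of quenched cumulants of $s_0^+$ and $s_0^+\wedge\tau$ is exactly the paper's starting point, but the way you then control those differences has a genuine gap. You route everything through the quenched tail probability $q=P^\theta_{n,t}(s_0^+>\tau)$ and need $\mathbb{P}(q\ge e^{-c\tau^2/n})$ to be $O(n^{-K})$. You correctly identify that the naive Markov bound produces the fatal superpolynomial factor $e^{c\tau^2/n}=e^{cn^{1/3+\delta+2\epsilon}}$, but your proposed ``pigeonhole'' fix is invalid: $q$ is a random variable of the environment, while $s_0^+$ is a variable of the quenched measure, so ``$q\ge e^{-c\tau^2/n}$ and $s_0^+\le\tau/2$'' is not an event one can intersect with, and there is no contradiction to be had --- an environment can perfectly well have $P^\theta_{n,t}(s_0^+\le\tau/2)$ close to $1$ while putting mass exactly $e^{-c\tau^2/n}$ beyond $\tau$, so that $E^\theta_{n,t}[(s_0^+)^{2k}]\ge\tau^{2k}e^{-c\tau^2/n}$ holds without forcing anything about ``$s_0^+$ being large.'' A correct bound on $\mathbb{P}(q\ge e^{-c\tau^2/n})$ is precisely Lemma \ref{lem: SV tail bound}, which requires central-moment bounds on $\overline{\log Z_{n,t}^\theta}$ and $\overline{\log Z_{n,t}^\lambda}$ that are not among the hypotheses of the present lemma; so your argument cannot be closed from assumption \eqref{eqn: truncation assumption} alone.

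The paper's proof avoids quenched tail probabilities entirely. After telescoping the product over blocks and applying H\"older so that only one factor $\kappa_{\ell_a}(s_0^+)-\kappa_{\ell_a}(s_0^+\wedge\tau)$ carries the difference, it expresses each cumulant as a polynomial in quenched moments and telescopes once more, so that every difference term contains a factor $E^\theta_{n,t}[(s_0^+)^{\alpha_v}\mathbbm{1}_{\{s_0>\tau\}}]\le\tau^{\alpha_v-M}E^\theta_{n,t}[(s_0^+)^{M}]$ for an \emph{arbitrary} $M$ --- a deterministic quenched Chebyshev bound. Taking annealed expectations and invoking \eqref{eqn: truncation assumption}, each unit of $M$ costs roughly $\tau^{-1}\cdot n^{2/3+\delta/2+\epsilon/2}=n^{-\epsilon/2}$, so choosing $M$ large compared with $K/\epsilon$ (the remaining factors being only polynomially large by \eqref{eqn: moment-a-priori} and \eqref{eqn: a-priori-centered}) yields the $n^{-K}$ bound. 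If you replace your tail-event dichotomy and the Cauchy-formula comparison of tilted measures by this purely moment-theoretic truncation, the rest of your outline goes through.
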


\begin{remark}
We only require \eqref{eqn: truncation assumption} hold for $s_0^+$.  We could equivalently replace $s_0^+$ with $s_0^-$ in the assumption.
\end{remark}
\begin{proof}
By Corollary \ref{cor: exchange}, for $0\leq\tau\le t$
\[\mathbb{E}[(\overline{\log Z^\theta_{n,t}})^j H_{k,\tau}(B_0(\tau))]=(-1)^k\sum_{\substack{\ell_1+\cdots+\ell_j=k\\ \ell_i\ge 0}}\frac{k!}{\ell_1!\cdots \ell_j!}\mathbb{E}\left[\prod_{i=1}^j \kappa_{\ell_i}^\theta(s_0^+\wedge \tau)\right],\]
where we interpret $\kappa_0^\theta(s_0^+\wedge \tau)=\overline{\log Z_{n,t}^\theta}$.  It will therefore suffice to compare expectations of products of quenched cumulants of $s_0^+$ and $s_0^+\wedge \tau$. Let $I=\{1,\ldots, j\}$. We want to estimate
\begin{equation}
    \label{eqn: difference}
    \mathbb{E}[\prod_{i\in I} \kappa_{\ell_i}(s_0^+)]-\mathbb{E}[\prod_{i\in I} \kappa_{\ell_i}(s_0^+\wedge \tau)],
\end{equation}
where $\sum_i \ell_i=k$.  By a telescoping argument, it is enough to estimate
\[\mathbb{E}[\kappa_{\ell_a}(s_0^+)\prod_{i\in I_1} \kappa_{\ell_i}(s_0^+) \prod_{i\in I_2}\kappa_{\ell_i}(s_0^+\wedge \tau)]-\mathbb{E}[\kappa_{\ell_a}(s_0^+\wedge \tau)\prod_{i\in I_1} \kappa_{\ell_i}(s_0^+) \prod_{i\in I_2}\kappa_{\ell_i}(s_0^+\wedge \tau)],\]
where  $I=I_1\cup I_2 \cup\{a\}$ and $\ell_a\neq 0$ (if $\ell_a=0$, then the difference is zero).  By H\"older's estimate, this difference is bounded by
\begin{equation}\label{eqn: L2}
    \mathbb{E}[|\kappa_{\ell_a}(s_0^+)-\kappa_{\ell_a}(s_0^+\wedge \tau)|^2]^{1/2} \prod_{i\in I_1}\mathbb{E}[|\kappa_{\ell_i}(s_0^+)|^{2j-2}]^{1/(2j-2)}\prod_{i\in I_2}\mathbb{E}[|\kappa_{\ell_i}(s_0^+\wedge \tau)|^{2j-2}]^{1/(2j-2)}.
\end{equation}
To bound the two products in \eqref{eqn: L2}, we use equation \eqref{eq: joint cumulant formula 2} to obtain the estimate
\[|\kappa_{\ell_i}(s_0^+\wedge \tau)|\vee|\kappa_{\ell_i}(s_0^+)|\le (\ell_i-1)!\sum_\pi \prod_{B\in \pi}E[(s_0^+)^{|B|}], \quad \ell_i\neq 0\]
where $\pi$ runs over all partitions of $\{1,\dots,\ell_i\}$ and $E=E^\theta_{n,t}$.
Taking the $L^b$-norm, $b\ge 1$ we have   
\begin{equation}\label{eqn: Lb}
\mathbb{E}[|\kappa_{\ell_i}(s_0^+\wedge\tau)|^b]^{1/b}\vee\mathbb{E}[|\kappa_{\ell_i}(s_0^+)|^b]^{1/b}\le \begin{cases} 
C\sqrt{n}, & \quad \ell_i=0\\
C^{\ell_i}(\ell_i-1)!\mathbb{E}^{\theta}[(s_0^+)^{\ell_i b}]^{1/b},& \quad \ell_i\neq 0.
\end{cases}
\end{equation}
Recall from \eqref{eqn: kappa0-def} that $\kappa_0(s_0^+\wedge \tau)=\kappa_0(s_0^+)=\overline{\log Z_{n,t}^\theta}$, so the case $\ell_i=0$ follows from \eqref{eqn: a-priori-centered} and the fact that $t=O(n)$.
Now define
\[M(I_1,I_2,n):= \{i\in I_1\cup I_2: \ell_i=0\} \quad \text{ and } \quad m_0:= |M(I_1,I_2,n)|.\]
Proceeding with the estimate \eqref{eqn: Lb}, we have
\begin{align}
    &\prod_{i\in I_1}\mathbb{E}[|\kappa_{\ell_i}(s_0^+)|^{2j-2}]^{1/(2j-2)}\prod_{i\in I_2}\mathbb{E}[|\kappa_{\ell_i}(s_0^+\wedge \tau)|^{2j-2}]^{1/(2j-2)}\nonumber\\
    \le&~(C\sqrt{n})^{m_0}\cdot \prod_{i\in I_1,I_2: \ell_i \neq 0 }C^{\ell_i}  (\ell_i-1)!  \mathbb{E}^{\theta}[(s_0^+)^{2j-2}]^{\ell_i/(2j-2)}  \nonumber\\
    \le&~ C^{m_0+k-\ell_a}(k-\ell_a)! n^{m_0/2}   \mathbb{E}^{\theta}[(s_0^+)^{2j-2}]^{(k-\ell_a)/(2j-2)}.\nonumber\\
    \le&~C^{j+k}k! C(j-1,\theta)^{\frac{k-\ell_a}{2j-2}} n^{(j+k-\ell_a)(\frac{2}{3}+\frac{\delta}{2}+\frac{\epsilon}{2})}. \label{eqn: controlling I1I2}
\end{align}
The last inequality follows from  $n^{m_0/2}\le n^{j(2/3+\delta/2+\epsilon/2)}$ and the assumption \eqref{eqn: truncation assumption}.

We now estimate the first factor in \eqref{eqn: L2}. Expressing $\kappa_{\ell_a}(s_0^+)$, $\kappa_{\ell_a}(s_0^+\wedge \tau)$ in terms of moments, we see that it suffices to bound the $L^1$-norm of the difference
\[\prod_{i=1}^r E[(s_0^+\wedge\tau)^{\alpha_i}]-\prod_{i=1}^r E[(s_0^+)^{\alpha_i}],\]
where $r\le \ell_a$ and $\sum \alpha_i = \ell_a$.
By another telescoping argument, it suffices to bound the expectation of
\begin{equation}\label{eqn: applying M}
    \begin{split}
&E[(s_0^+)^{\alpha_v} s_0>\tau]\prod_{i=1}^{v-1}E[(s_0^+)^{\alpha_i}]\prod_{i=v+1}^r E[(s_0^+\wedge \tau)^{\alpha_i}]\\
\le~& \tau^{-M+\alpha_v}E[(s_0^+)^M]\prod_{i=1}^{v-1}E[(s_0^+)^{\alpha_i}]\prod_{i=v+1}^r E[(s_0^+\wedge \tau)^{\alpha_i}]  
\end{split}
\end{equation}
where $M\geq \ell_a$.  
Applying H\"older's inequality to \eqref{eqn: applying M} followed by  \eqref{eqn: Lb} and assumption \eqref{eqn: truncation assumption},
  \begin{align}
      &\mathbb{E}[|\kappa_{\ell_a}(s_0^+)-\kappa_{\ell_a}(s_0^+\wedge \tau)|^2]^{1/2} \nonumber \\
      &\le~ C^{\ell_a} \ell_a! \cdot \tau^{-M} \sum_{v=1}^r\tau^{\alpha_v} \mathbb{E}^{\theta}[(s_0^+)^{2Mr}]^{1/(2r)}\prod_{i\neq v}\mathbb{E}^{\theta}[(s_0^+)^{2r \cdot \alpha_i}]^{1/(2r)}\nonumber\\
       &\le~ C^{k}k!\sum_{v=1}^r \tau^{\alpha_v-M} C(Mr,\theta)^{\frac{1}{2r}} \big(\prod_{i\neq v} C(r \alpha_i,\theta)\big)^{\frac{1}{2r}}n^{(M+\ell_a-\alpha_v)(2/3+\frac{\delta}{2}+\frac{\epsilon}{2})}\nonumber\\
       &\le~ C'(k,\ell_a,\theta,M)n^{\ell_a(2/3+\frac{\delta}{2}+\frac{\epsilon}{2})}n^{-M\frac{\epsilon}{2}}.\label{eqn: apply assumption}
    \end{align}
    
%by the assumption \eqref{eqn: truncation assumption}, the quantity \eqref{eqn: controlling I1I2} is bounded by
%  \begin{align}
%&C^{\ell_a} C(2M\ell_a,\theta,\epsilon) \big(\prod_{i\neq j} C(2r \alpha_i,\theta,\epsilon)\big)\ell_a ! \tau^{-M}n^{(2/3+\delta)(\ell_a+M)+\ell_a \cdot \epsilon} \nonumber\\
%=~& C'(\epsilon,\theta,M)n^{((2/3)+\delta/4+\epsilon/2)\ell}n^{-M\epsilon}. \label{eqn: truncation: small-factor}
 % \end{align}

Combining \eqref{eqn: controlling I1I2} and \eqref{eqn: apply assumption} we bound \eqref{eqn: L2} by
\[C''(j,k,\theta,M)n^{(j+k)((2/3)+\delta+\epsilon)}n^{-M\frac{\epsilon}{2}}.\]
Choosing $M$ sufficiently large, depending on $\epsilon$, $j$, $k$, $\delta$,  and $K$, we find that the difference \eqref{eqn: difference} is indeed negligible.
%for sufficiently large $n\ge n_0(j,k,\epsilon,M)$.
\end{proof}

\subsection{Improved estimate for central moments}
\begin{lemma}\label{lemma: s to log Z}
Suppose 
\[
|t-n\psi_1(\theta)|\leq A n^{\frac{2}{3}}.
\]
Assuming the moment bounds \eqref{eqn: truncation assumption}, there are constants $C(k,\theta)$, locally bounded in $\theta$, such that, for $k\ge 2$ even
\begin{equation}\label{eqn: est}
\mathbb{E}[(\ov{\log Z_{n,t}^\theta})^k]\le C(k,\theta)n^{(1/3)k+(\delta/3)k}
\end{equation}
for all $n$ sufficiently large.  %\textcolor{red}{Here $C(k,\theta,\epsilon)$ is continuous in $\theta$ (or locally bounded in $\theta$), so the same constant can be taken for all $\theta$ in a compact set. (Why do we need an $\epsilon$ dependence in $C(k,\theta,\epsilon)$?)}
\end{lemma}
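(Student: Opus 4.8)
The plan is to combine the exact cumulant formula from Theorem~\ref{theorem: explicit formula general} with the truncation machinery of Lemma~\ref{lem: truncation}, and then close an induction on $k$. Since $\mathbb{E}[(\ov{\log Z_{n,t}^\theta})^k]$ can be written in terms of $\kappa_j(\log Z_{n,t}^\theta)$ for $j\le k$ together with lower central moments, and the odd cumulants/moments are controlled by interpolation from the even ones, it suffices to bound $\kappa_k(\log Z_{n,t}^\theta)$ for even $k$. By Theorem~\ref{theorem: explicit formula general}, modulo the explicit term $n(-1)^{k-1}\psi_{k-1}(\theta)$ (which is $O(n)$, hence of order at most $n^{(1/3)k}$ once $k\ge 3$, and absorbed into the $k=2$ base case when $k=2$), the $k$-th cumulant is a finite sum over partitions $\pi$ of $\{1,\dots,k\}$ and indices $j\in\{1,\dots,k-1\}$ of products $\prod_{B\in\pi}\mathbb{E}[(\ov{\log Z_{n,t}^\theta})^{a_{j,B}}H_{b_{j,B},t}(B_0(t))]$, with $\sum_B(a_{j,B}+b_{j,B})=k$, $a_{j,B}\le j\le k-1$, and no singleton blocks and no block $B$ entirely inside $\{j+1,\dots,k\}$ (so every factor has either $a_{j,B}\ge 1$ or, when $a_{j,B}=0$, automatically $b_{j,B}\ge 2$; and each $a_{j,B}\le k-1<k$).

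The key step is to estimate each factor $\mathbb{E}[(\ov{\log Z_{n,t}^\theta})^{a}H_{b,t}(B_0(t))]$ with $a+b\le k$ and $a\le k-1$. First I would replace $H_{b,t}(B_0(t))$ by $H_{b,\tau}(B_0(\tau))$ with $\tau=n^{2/3+\delta/2+\epsilon}$ at the cost of a negligible $O(n^{-K})$ error, using Lemma~\ref{lem: truncation} (whose hypothesis \eqref{eqn: truncation assumption} is exactly the content of Corollary~\ref{cor: log Z to s} applied to the inductive hypothesis \eqref{eqn: est} for indices below $k$ — note that $\epsilon$ can be taken $<\delta/10$). Then $H_{b,\tau}(B_0(\tau))$ is a polynomial of degree $b$ in $B_0(\tau)$ with coefficients controlled by powers of $\tau$, and since $B_0(\tau)$ is Gaussian of variance $\tau$, all its moments satisfy $\mathbb{E}[|B_0(\tau)|^m]^{1/m}\le C_m\tau^{1/2}$; hence $\mathbb{E}[|H_{b,\tau}(B_0(\tau))|^m]^{1/m}\le C_{b,m}\tau^{b/2}$. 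By Hölder, $|\mathbb{E}[(\ov{\log Z_{n,t}^\theta})^{a}H_{b,\tau}(B_0(\tau))]|\le \mathbb{E}[|\ov{\log Z_{n,t}^\theta}|^{a\cdot r}]^{1/r}\,\mathbb{E}[|H_{b,\tau}(B_0(\tau))|^{r'}]^{1/r'}$ for a conjugate pair $(r,r')$; choosing $r$ so that $ar$ is an even integer $<$ something controllable (or just bounding $\mathbb{E}[|\ov{\log Z}|^{ar}]$ by interpolation between the even-moment bounds \eqref{eqn: est} already known for indices $\le k-1$, and the a priori bound \eqref{eqn: a-priori-centered} for the top index when $a=k-1$), one gets each factor bounded by $C\,n^{(1/3+\delta/3)a}\cdot \tau^{b/2}=C\,n^{(1/3+\delta/3)a}\,n^{(1/3+\delta/4+\epsilon/2)b}$. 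Summing exponents over a product $\prod_{B\in\pi}$ with $\sum_B a_{j,B}=:a_\ast$, $\sum_B b_{j,B}=:b_\ast$, $a_\ast+b_\ast=k$, yields the bound $n^{(1/3+\delta/3)a_\ast+(1/3+\delta/4+\epsilon/2)b_\ast}\le n^{(1/3)k+(\delta/3)k}$ once $\epsilon$ is small relative to $\delta$; summing over the finitely many partitions and $j$ gives \eqref{eqn: est} with a possibly larger constant.

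The main subtlety — and the step that needs care — is the factor with $a=k-1$ (equivalently $b=1$): here the inductive bound \eqref{eqn: est} is not yet available at index $k-1$ if $k-1$ is odd, and in any case one must not invoke the bound at index $k$. This forces using the interpolated/odd-moment version of the hypothesis together with the a priori estimate \eqref{eqn: a-priori-centered} ($\mathbb{E}[|\ov{\log Z}|^p]^{1/p}=O(\sqrt n)$, using $t=O(n)$) to keep $\mathbb{E}[|\ov{\log Z}|^{(k-1)r}]^{1/(k-1)r}$ under control, at the cost of Hölder with $r$ slightly above $1$ so that $r'$ is large and $\tau^{b/2}$ picks up only a harmless extra $n^{\epsilon'}$; since $b=1$ the $\tau$-cost is only $\tau^{1/2}=n^{1/3+\delta/4+\epsilon/2}$, leaving room. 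A second point to check is that the telescoping/combinatorial bookkeeping in Lemma~\ref{lem: truncation} and the partition sum produce only finitely many terms with constants depending on $k,\theta,\epsilon,\delta$ but not $n$, and that ``$n$ sufficiently large'' can absorb the $O(n^{-K})$ truncation errors and the $O(n)$ term $n\psi_{k-1}(\theta)$. Assembling these, and converting the cumulant bound back to a central-moment bound via the standard moments-from-cumulants formula and interpolation for odd moments, completes the proof of \eqref{eqn: est}.
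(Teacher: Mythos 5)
Your overall architecture matches the paper's: express $\kappa_k$ via Theorem~\ref{theorem: explicit formula general}, truncate $H_{b,t}(B_0(t))$ to $H_{b,\tau}(B_0(\tau))$ with $\tau=n^{2/3+\delta/2+\epsilon}$ using Lemma~\ref{lem: truncation}, apply H\"older factor by factor, and induct on even $k$. The gap is exactly at the term you flag as the ``main subtlety,'' namely the block with $a_{j,B}=k-1$, $b_{j,B}=1$ (the trivial partition with $j=k-1$): your proposed fix does not close it. To control $\mathbb{E}[|\ov{\log Z_{n,t}^\theta}|^{(k-1)r}]^{1/r}$ you must interpolate between the inductive bound at the even index $k-2$ and the a priori bound \eqref{eqn: a-priori-centered}, i.e.\ $\|\ov{\log Z}\|_{k-1}\le \|\ov{\log Z}\|_{k-2}^{(k-2)/(k-1)}\,\|\ov{\log Z}\|_{m}^{\,1/(k-1)}$ for large $m$. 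Raising to the power $k-1$ gives $\mathbb{E}[|\ov{\log Z}|^{k-1}]\lesssim n^{(1/3+\delta/3)(k-2)}\cdot n^{1/2}=n^{(1/3+\delta/3)(k-1)}\cdot n^{1/6-\delta/3}$, so this single factor overshoots its budget by $n^{1/6-\delta/3}$, while the ``room'' left by $b=1$ (the gap between $\tau^{1/2}=n^{1/3+\delta/4+\epsilon/2}$ and the target $n^{1/3+\delta/3}$) is only $n^{\delta/12-\epsilon/2}$. Since the iteration of Lemma~\ref{lemma: inductive argument} drives $\delta\to 0$, an uncompensated $n^{1/6-O(\delta)}$ loss is fatal; the extra loss is \emph{not} a harmless $n^{\epsilon'}$ as claimed, and taking $r$ close to $1$ does not help because the problem is already present at $r=1$.

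The paper resolves this differently, and this is the one idea your proposal is missing: instead of bounding $(\ov{\log Z})^{a_{j,B}}$ by moments of index $\le k-1$, apply H\"older against the $k$-th moment itself with a \emph{fractional} exponent, $|\mathbb{E}[(\ov{\log Z})^{a}H_{b,\tau}(B_0(\tau))]|\le \mathbb{E}[(\ov{\log Z})^{k}]^{a/k}\cdot C\tau^{b/2}$, multiply over blocks to get $\mathbb{E}[(\ov{\log Z})^{k}]^{a_j/k}\,n^{(1/3+\delta/4+\epsilon/2)b_j}$ with $a_j+b_j=k$ and $b_j\ge1$, and then use Young's inequality to split this as $\eta\,\mathbb{E}[(\ov{\log Z})^{k}]+C(\eta)n^{(1/3+\delta/4+\epsilon/2)k}$. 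The term $\eta\,\mathbb{E}[(\ov{\log Z})^{k}]$ is then absorbed into the left-hand side of the moment--cumulant identity \eqref{eqn: cumulant-sum} by taking $\eta$ small (a self-improvement/absorption step), which is what makes the top block harmless without ever needing a bound at index $k-1$. You should replace your interpolation step for $a=k-1$ with this absorption argument; the rest of your proposal (truncation, treatment of blocks with $a\le k-2$, moments-from-cumulants bookkeeping, and handling of odd lower moments by Jensen/interpolation) is consistent with the paper.
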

\begin{proof}
The proof is by induction on $k$. For $k=2$, \eqref{eqn: est} holds with $\delta=0$. Assuming the estimate for even exponents less than $k$, we use the first expression in Theorem \ref{theorem: explicit formula general} to express the cumulant $\kappa_k(\log Z_{n,t}^\theta)$ as a sum of terms of the form
\begin{equation}\label{eqn: term-to-bound}
\prod_{B\in \pi} \mathbb{E}\left[(\ov{\log Z_{n,t}^\theta})^{a_{j,B}}H_{b_{j,B},t}(B_0(t))\right],
\end{equation}
where $\pi$ is a partition of $\{1,\ldots, k\}$ into $|\pi|$ blocks $B$, and $a_{j,B}+b_{j,B}=|B|$. 

Using Lemma \ref{lemma: expectation of log times hermite}, Corollary \ref{cor: exchange}, and Lemma \ref{lem: truncation} with $K> 2k$, we have, for $\tau=n^{2/3+\delta/2+\epsilon}$,
\begin{align*}
&~\prod_{B\in \pi} \mathbb{E}\left[(\ov{\log Z_{n,t}^\theta})^{a_{j,B}}H_{b_{j,B},t}(B_0(t))\right]\\
=&~\prod_{B\in \pi} \mathbb{E}\left[(\ov{\log Z_{n,t}^\theta})^{a_{j,B}}H_{b_{j,B},\tau}(B_0(\tau))\right]+O(n^{-k}).
\end{align*}
Taking absolute values and applying H\"older's inequality,
\begin{align*}
&\left|\mathbb{E}\left[(\ov{\log Z_{n,t}^\theta})^{a_{j,B}}H_{b_{j,B},\tau}(B_0(\tau))\right]\right|\\
\le&~\mathbb{E} [(\ov{\log Z_{n,t}^\theta})^k]^{\frac{a_{j,B}}{k}}\mathbb{E}[|H_{b_{j,B},\tau}(B_0(\tau))|^{k'}]^{\frac{b_{j,B}}{k'}}\\
\le&~ Cn^{((1/3)+\delta/4+\epsilon/2)b_{j,B}}\mathbb{E} [(\ov{\log Z_{n,t}^\theta})^k]^{\frac{a_{j,B}}{k}},
\end{align*}
where $\frac{a_{j,B}}{k}+\frac{1}{k'}=1$.  Taking the product over $B\in \pi$, we have, up to a constant factor, the bound:
    \begin{equation}\label{eqn: apply young}n^{((1/3)+\delta/4+\epsilon/2)b_j}\mathbb{E} [(\ov{\log Z_{n,t}^\theta})^k]^{\frac{a_j}{k}},
\end{equation}
where 
\[ a_j:=\sum_B a_{j,B}\quad \text{ and } \quad b_j:=\sum_B b_{j,B},\]
so $\frac{a_j}{k}+\frac{b_j}{k}=1$.  Note that for $1\leq j\leq k-1$, both $a_j,\,b_j\geq 1$.  Applying Young's inequality $xy\le \frac{1}{p}x^p+\frac{1}{q}y^q$ to \eqref{eqn: apply young}, we find that for $\eta>0$, any term of the form \eqref{eqn: term-to-bound} is bounded by
\[\eta \mathbb{E}[(\ov{\log Z_{n,t}^\theta})^k]+C(\eta)n^{((1/3)+\delta/4+\epsilon/2)k}+O(n^{-k}).\]
Combining this with Theorem \ref{theorem: explicit formula general}, we have
\begin{equation}\label{eqn: eta-sum}\kappa_k(\log Z_{n,t}^\theta)= C(k)\eta \mathbb{E}[(\ov{\log Z_{n,t}^\theta})^k]+C(k)C(\eta)n^{((1/3)+\delta/4+\epsilon/2)k}+O(n).
\end{equation}
Writing
\begin{equation}\label{eqn: cumulant-sum}
\kappa_k(\log Z_{n,t}^\theta)=\mathbb{E}[(\ov{\log Z_{n,t}^\theta})^k]+\sum_{\substack{|\alpha|=k\\0\leq\alpha_i<k}} c_\alpha \prod_{i=1}^{|\alpha|} \mathbb{E}[(\ov{\log Z_{n,t}^\theta})^{\alpha_i}],
\end{equation}
where the sum is over multi-indices $\alpha=(\alpha_1,\ldots,\alpha_k)$, $\sum_i \alpha_i=k$. If some $\alpha_i=k-1$, then the product must equal zero.  Therefore, by the induction assumption, all terms in the sum on the right of \eqref{eqn: cumulant-sum} are of order $n^{((1/3)+\delta/3)k}$. Choosing $\eta$ sufficiently small in \eqref{eqn: eta-sum} and absorbing $\epsilon/2$ into $\delta/4$, we obtain the result.
\end{proof}

\subsection{Finishing the argument}

Combining Corollary \ref{cor: log Z to s} and Lemma \ref{lemma: s to log Z} we obtain the following:

\begin{lemma}\label{lemma: inductive argument}
Suppose 
\begin{equation*}
    |t-n\psi_1(\theta_0)|\le An^{2/3}.
\end{equation*}
Assume there exist constants $\delta>0,\, L>1$, and $C(k)>0$ for $k\in \{2,4,\dots\}$, such that for any even $k$,
\[
\mathbb{E}[\ov{(\log Z_{n,t}^\theta})^k]\le C(k)n^{(1/3)k+\delta k}\quad \text{ for all } n\geq 1 \text{ and }  \theta\in [\theta_0,\theta_0+L].
\]

Then there exist constants $C'(k)>0$ for $k\in \{2,4,\dots\}$ such that for any even $k$,
\[
\mathbb{E}[(\ov{\log Z_{n,t}^\theta})^k]\le C'(k)n^{(1/3)k+(\delta/3) k}\quad \text{ for all } n\geq 1 \text{ and } \theta\in [\theta_0,\theta_0+L-1].
\]
\end{lemma}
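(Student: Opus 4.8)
The plan is to derive this lemma by directly chaining the two substantive estimates already established, Corollary~\ref{cor: log Z to s} and Lemma~\ref{lemma: s to log Z}; no new analytic input is required, only careful bookkeeping of the parameter ranges. First I would fix an auxiliary exponent $\epsilon$ with $0<\epsilon<\delta/10$, so that it meets the constraint on $\epsilon$ imposed in Lemma~\ref{lemma: s to log Z}. The standing assumption that $\E[(\ov{\log Z_{n,t}^\theta})^k]\le C(k)n^{(1/3)k+\delta k}$ for all even $k$ and all $\theta\in[\theta_0,\theta_0+L]$ is precisely the hypothesis~\eqref{eqn: cor assumption} of Corollary~\ref{cor: log Z to s}, and the characteristic direction condition $|t-n\psi_1(\theta_0)|\le An^{2/3}$ is in force. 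Applying that corollary therefore produces, for every even $k$ and every $\theta\in[\theta_0,\theta_0+L-1]$, the bound $\E^\theta_{n,t}[(s_0^{\pm})^{2k}]\le C(\epsilon,k,L,\theta_0)n^{(4/3)k+\delta k+\epsilon}$. The loss of one unit in the right endpoint of the $\theta$-interval is exactly the one flagged in the remark following Corollary~\ref{cor: log Z to s}: it comes from the use of Lemma~\ref{lem: SV tail bound}, which needs the moment hypothesis both at $\theta$ and at a comparison parameter $\lambda$ with $\theta<\lambda<\theta+1$.

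The second step is to feed this into Lemma~\ref{lemma: s to log Z}. The bound just obtained is exactly its hypothesis~\eqref{eqn: truncation assumption} with our chosen $\epsilon<\delta/10$, and the remaining requirements of that lemma hold: $t=O(n)$ follows from the characteristic direction condition together with the local boundedness of $\psi_1$. Hence Lemma~\ref{lemma: s to log Z} yields $\E[(\ov{\log Z_{n,t}^\theta})^k]\le C(k,\theta)n^{(1/3)k+(\delta/3)k}$ for every even $k\ge 2$ and all sufficiently large $n$, with a constant locally bounded in $\theta$ and hence bounded uniformly over the compact interval $[\theta_0,\theta_0+L-1]$; this produces the claimed $C'(k)$.

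The only remaining point is to pass from ``for all sufficiently large $n$'' to ``for all $n\ge 1$''. Only finitely many values of $n$ are excluded, and for each of them the a priori bound~\eqref{eqn: a-priori-centered}, together with $t=O(n)$, shows that $\E[(\ov{\log Z_{n,t}^\theta})^k]$ is finite and bounded uniformly over $\theta\in[\theta_0,\theta_0+L-1]$; enlarging $C'(k)$ absorbs these values. I do not expect a genuine obstacle here, since all the real work sits inside Corollary~\ref{cor: log Z to s} and Lemma~\ref{lemma: s to log Z}. The items that genuinely require care are the compatibility of the two occurrences of $\epsilon$ between the results, the systematic shrinkage of the admissible $\theta$-interval by $1$ at each invocation — harmless because in the proof of Theorem~\ref{thm: main-moments} only finitely many iterations $\delta\mapsto\delta/3\mapsto\delta/9\mapsto\cdots$ are needed to push the excess exponent below any prescribed tolerance, so one simply starts from a large enough $L$ — and the uniformity of constants in $\theta$, which is carried along by the ``locally bounded in $\theta$'' clause present in both constituent lemmas.
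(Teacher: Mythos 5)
Your proposal is correct and follows exactly the paper's route: the paper obtains this lemma precisely by combining Corollary~\ref{cor: log Z to s} and Lemma~\ref{lemma: s to log Z}, and your bookkeeping of the $\epsilon<\delta/10$ compatibility, the shrinkage of the $\theta$-interval by one unit, and the absorption of finitely many small $n$ via the a priori bound~\eqref{eqn: a-priori-centered} supplies the details the paper leaves implicit.
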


Theorem \ref{thm: main-moments} will follow from repeated application of Lemma \ref{lemma: inductive argument} once we prove the following:

\begin{proposition}\label{prop: 1st step in induction}
For all  $\theta_0>0$ and $L>0$, there exist constants $C_k=C_k(\theta_0,L)>0$ for $k\in \{2,4,\dots\}$ such that for any even $k$,
\[
\mathbb{E}[(\ov{\log Z_{n,t}^\theta})^k]\le C_k n^{(1/3)k+(1/6) k}\quad \text{ for all } n\geq 1 \text{ and } \theta\in [\theta_0,\theta_0+L].
\]
\end{proposition}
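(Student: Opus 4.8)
The plan is to observe that the target exponent equals $(1/3)k+(1/6)k=k/2$, so the proposition is just the Gaussian-scale a priori bound $\mathbb{E}[(\ov{\log Z_{n,t}^\theta})^k]\le C_k n^{k/2}$ made uniform over the compact parameter interval $[\theta_0,\theta_0+L]$. This is in essence the estimate \eqref{eqn: a-priori-centered}, so the only points to settle are (i) that its constant can be taken uniform in $\theta$ on an interval bounded away from $0$, and (ii) that $\sqrt{t}+\sqrt{n}\le C\sqrt{n}$ under the characteristic direction hypothesis.

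For (i), I would revisit the derivation of \eqref{eqn: a-priori-centered}. By Proposition \ref{prop: burke}, $\ov{\log Z_{n,t}^\theta}=\ov{R}-B_0(t)$, where $R=\sum_{j=1}^n r_j^\theta(t)$ is a sum of $n$ i.i.d.\ copies of $\log(1/X_\theta)$, independent of $B_0$. The $L^k$ triangle inequality gives $\mathbb{E}[|\ov{\log Z_{n,t}^\theta}|^k]^{1/k}\le \mathbb{E}[|\ov R|^k]^{1/k}+\mathbb{E}[|B_0(t)|^k]^{1/k}$; the second term is $c_k\sqrt{t}$ with $c_k$ depending only on $k$, while a classical moment inequality for sums of i.i.d.\ random variables (Rosenthal's inequality) bounds the first by $C_k\sqrt{n}$ times a power of $\mathbb{E}[|\log(1/X_\theta)-\mathbb{E}\log(1/X_\theta)|^k]^{1/k}\vee \psi_1(\theta)^{1/2}$ (here one uses $n\le n^{k/2}$, valid since $k\ge 2$). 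Because $\theta\mapsto \psi_1(\theta)$ and $\theta\mapsto \mathbb{E}[|\log(1/X_\theta)-\mathbb{E}\log(1/X_\theta)|^k]$ are continuous and finite on $(0,\infty)$, they are bounded on $[\theta_0,\theta_0+L]$, so the resulting constant depends only on $k$, $\theta_0$, $L$.

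For (ii), I would invoke the standing characteristic direction condition (in the form $|t-n\psi_1(\theta_0)|\le An^{2/3}$ used in Lemma \ref{lemma: inductive argument}) together with the boundedness of $\psi_1$ on $[\theta_0,\theta_0+L]$: this yields $t\le n\psi_1(\theta_0)+An^{2/3}\le Cn$ for all $n\ge 1$, after enlarging $C$ to absorb the finitely many small values of $n$. Hence $\sqrt{t}+\sqrt{n}\le C\sqrt{n}$, and combined with step (i) we obtain $\mathbb{E}[|\ov{\log Z_{n,t}^\theta}|^k]^{1/k}\le C_k\sqrt{n}$ uniformly over $\theta\in[\theta_0,\theta_0+L]$. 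Since $k$ is even, $(\ov{\log Z_{n,t}^\theta})^k=|\ov{\log Z_{n,t}^\theta}|^k$, and raising to the $k$-th power gives the claim.

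There is no genuine obstacle here — this is the base case of the induction that proves Theorem \ref{thm: main-moments} — and the only mildly delicate points, namely uniformity of the constant over the compact $\theta$-interval and the reduction $t=O(n)$, are dispatched respectively by continuity of $\psi_1$ and of the $k$-th absolute central moment of $\log(1/X_\theta)$, and by the characteristic direction hypothesis.
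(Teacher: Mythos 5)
Your proposal is correct and follows essentially the same route as the paper: decompose $\ov{\log Z_{n,t}^\theta}=\ov{R}-B_0(t)$ via Proposition \ref{prop: burke}, bound the i.i.d.\ sum by $C_k(\theta)n^{k/2}$ (the paper asserts this by continuity in $\theta$ where you invoke Rosenthal's inequality explicitly), and use the characteristic direction condition to get $t=O(n)$ so that $\E[B_0(t)^k]\le Cn^{k/2}$, with uniformity over the compact $\theta$-interval from continuity of the relevant quantities. The only cosmetic difference is the $L^k$ triangle inequality in place of the paper's $2^{k-1}$ convexity bound.
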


\begin{proof}
For convenience, again let $A=\ov{\log Z_{n,t}^\theta}$, $B_0=B_0(t)$, and $R=\sum_{j=1}^n r_j^\theta(t)$.  By Proposition \ref{prop: burke}, $A=\overline{R}-B$. Thus, for even $k$,

\begin{equation}
    \E[A^k]\leq 2^{k-1}(\E[\ov{R}^k]+\E[B_0^k]).\label{eq: minkowsi bound}
\end{equation}
Since $R$ is a sum of i.i.d.\   random variables whose common distribution continuously depends on $\theta$,  there exist constants $C_k(\theta)>0$% for $k\in \{2,4,\dots\}$
, all of which are continuous in $\theta$, such that 
\[
\E[(\ov{R})^k]\leq C_k(\theta)n^{(k/2)} \quad \text{ for all } n\geq 1.
\]
The other expectation in \eqref{eq: minkowsi bound} satisfies
\[
\E[B_0^k]=(k/2-1)!!t^{(k/2)}\leq (k/2-1)!!\left(An^{(2/3)}+n\psi_1(\theta_0)\right)^{(k/2)}\leq D_k(\theta_0)n^{(k/2)}, 
\]
for all $n\geq 1$, where $D_k(\theta_0)>0$ are constants which are continuous in $\theta_0$.       Plugging these two inequalities into equation \eqref{eq: minkowsi bound} and using the continuity of $C_k(\theta)$ and $D_k(\theta_0)$ on $(0,\infty)$ yields the desired result.
\end{proof}

\begin{proof}[Proof of Theorem \ref{thm: main-moments}]
Let $\epsilon>0$, $\theta_0\in (0,\infty)$, and $p\in (0,\infty)$.  Fix even integers $k$, $M$ such that $p\leq k$ and
%and find an integer $M$ large enough such that
\[
\frac{(1/6)}{3^M}\leq \epsilon.
\]
By Jensen's inequality, it suffices to show the bounds \eqref{eq: Thrm 1-1} and \eqref{eq: Thrm 1-2} hold with $p$ replaced by $k$.  Now fix
  $L>M$ and apply Proposition \ref{prop: 1st step in induction} followed by $M$ consecutive applications of Lemma \ref{lemma: inductive argument} to obtain the bound \eqref{eq: Thrm 1-1}.  Finally, apply Corollary \ref{cor: log Z to s} to both $s_0^+$ and $s_0^-$ to obtain the bound \eqref{eq: Thrm 1-2}.
\end{proof}

%To complete the induction, we must first show that there there exists a $\delta>0$ such that 

\end{document}